\documentclass[11pt,a4paper]{article}
\usepackage{epsf,epsfig}
\usepackage{amsmath,amsfonts,amsgen,amstext,amsbsy,amsopn,amsthm,amssymb}
\usepackage{enumerate}
\usepackage{hhline}
\usepackage{multirow}
\usepackage{multicol}
\usepackage{booktabs}
\usepackage{lineno}
\usepackage{bm}
\usepackage{graphics}
\usepackage{latexsym}
\usepackage{bbding}
\usepackage{indentfirst}
\usepackage{graphicx}
\usepackage{color}
\usepackage[colorlinks=true,anchorcolor=blue,filecolor=blue,linkcolor=red,urlcolor=blue,citecolor=blue]{hyperref}
\hypersetup{pdftitle={Spectral Extremal Graphs for Even Factors and Connected Even Factors},pdfauthor={},hypertexnames=false}
\usepackage{float}
\usepackage{tikz}
\usetikzlibrary{calc,positioning,arrows.meta,fit,backgrounds,decorations.pathreplacing,shapes.geometric}
\allowdisplaybreaks[4]
\usepackage{geometry}
\newcommand{\mb}[1]{\boldsymbol{#1}}
\newcommand{\n}{\noindent}

\newtheorem{thm}{Theorem}[section]

\newtheorem{lemma}[thm]{Lemma}
\newtheorem{claim}{Claim}[section]

\newtheorem{prop}[thm]{Proposition}

\renewcommand{\leq}{\leqslant}
\renewcommand{\le}{\leqslant}
\renewcommand{\geq}{\geqslant}
\renewcommand{\ge}{\geqslant}

\title{Spectral extremal graphs for even factors}
\author{%
Zeyuan Wu\thanks{School of Mathematics and
Statistics, Minnan Normal University, Zhangzhou, Fujian, 363000,
China. Email: \url{zywu576@163.com}.}
\and
Hongzhang Chen\thanks{School of Mathematics and Statistics, Gansu
Center for Applied Mathematics, Lanzhou University, Lanzhou, Gansu,
730000, China. Email: \url{mnhzchern@gmail.com}.}
\and Xinting Shi\thanks{School of Mathematics and
Statistics, Minnan Normal University, Zhangzhou, Fujian, 363000,
China. Email: \url{xtshii@163.com}.}
\and Jianxi
Li\thanks{Corresponding author. School of Mathematics and
Statistics, Minnan Normal University, Zhangzhou, Fujian, 363000,
China. Email: \url{ptjxli@hotmail.com}. Partially supported by the
NSF of Fujian Province (No.\ 2026J001968).}%
}

\date{\today}

\begin{document}
\maketitle

\begin{abstract}
An even factor of a graph $G$ is a spanning subgraph in which every vertex has positive even degree. 
It is known that the minimum degree $\delta(G)\ge 2$ is a trivial necessary condition for $G$ to have an even factor.  Recent spectral results for the existence of even factors used the certain complete-join graphs as exceptional extremal graphs.  However, these graphs already contain $2$-factors and therefore are not genuine obstructions.  This observation leads to the
natural problem of determining the true sharp spectral threshold when the minimum
degree is given.  In this paper, we provide tight adjacency spectral radius conditions for a connected graph 
to contain an even factor, and characterize all extremal graphs, respectively. We also study the stronger 
requirement of a connected even factor, equivalently a spanning connected Eulerian
subgraph.  For this property, we also establish the corresponding sharp adjacency spectral radius condition and determine the unique extremal graph.
\end{abstract}

\textbf{Keywords:} (connected) even factor, spectral radius, spectral extremal graph.

\textbf{2020 Mathematics Subject Classification:} 05C50.

\section{Introduction}

All graphs considered in this paper are finite, simple, and undirected.
Let $G=(V(G),E(G))$ be a graph.  We write $|V(G)|=n$ for its order and
$e(G)=|E(G)|$ for its size.  For a vertex $v\in V(G)$, we denote its degree by
$d_G(v)$, or simply by $d(v)$ when the graph is clear from the context.  We denote
the minimum degree of $G$ by $\delta(G)$, or simply by $\delta$.  For a subset
$S\subseteq V(G)$, we use $G[S]$ and $G-S$ to denote the subgraphs of $G$ induced
by $S$ and by $V(G)\setminus S$, respectively.  If $G_1$ and $G_2$ are
vertex-disjoint graphs, then $G_1\cup G_2$ denotes their disjoint union, and
$G_1\vee G_2$ denotes the graph obtained from $G_1\cup G_2$ by joining every
vertex of $G_1$ to every vertex of $G_2$.  If $X,Y\subseteq V(G)$ are disjoint,
then $e_G(X,Y)$ denotes the number of edges joining $X$ and $Y$.

Let $A(G)$ be the adjacency matrix of a graph $G$.  Since $A(G)$ is real symmetric, all
its eigenvalues are real.  We order them as
$\lambda_{1}(A(G))\geq \lambda_{2}(A(G))\geq \cdots \geq
\lambda_{n}(A(G))$.  The largest eigenvalue $\lambda_{1}(A(G))$, denoted by
$\rho(G)$, is called the \emph{spectral radius} of $G$.  The matrix $A(G)$ is
nonnegative, and it is irreducible if and only if $G$ is connected.  Hence, when
$G$ is connected, the Perron--Frobenius theorem implies that $\rho(G)$ is
positive and simple, and that $G$ has a unique positive unit eigenvector
${\mb x}=(x_1,x_2,\ldots,x_n)^{T}$ associated with $\rho(G)$.  We call this
vector the Perron vector of $G$.

A spanning subgraph $F$ of a graph $G$ is called an \textit{even factor} if $d_F(v)$ is a
positive even integer for every $v\in V(G)$.  Thus a 2-factor is a special even
factor.  Even factors belong to the broader theory of graph factors, which seeks
spanning subgraphs satisfying prescribed degree constraints.  Recently, spectral
sufficient conditions for even factors have been investigated from several
viewpoints; see, for example, \cite{liu,JiaLuZhou,zhou}.  In such results, a
natural extremal candidate is often the complete-join graph
\[
 J_{n,\delta}=K_\delta\vee
 \bigl(K_{n-2\delta+1}\cup(\delta-1)K_1\bigr).
\]
However, this graph is not a genuine obstruction to the existence of even
factors.  Indeed, when $n\ge 2\delta+2$, the clique $K_{n-2\delta+1}$ has a
Hamilton cycle, while the remaining vertices can be covered by another
cycle using the $K_\delta$ part and the $\delta-1$ isolated vertices before
the join. Hence $J_{n,\delta}$ contains a 2-factor, and consequently it
already contains an even factor.  This shows that the extremal graph in a
sharp adjacency-spectral theorem must be different.

The purpose of this paper is to give corrected adjacency-spectral extremal
statements for even factors in connected graphs with prescribed minimum
degree.  Note that \(\delta\geq 2\) is a trivial necessary condition for a graph 
to have an even factor. 
We first consider $\delta\ge3$ and $n\ge\delta^2+\delta+1$.  The graph $\mathcal A_{n,\delta}$ (shown in Fig.~\ref{fig:obstructions}) is constructed from one vertex $u$, one clique $C_0\cong K_{n-\delta^2}$, and $\delta-1$ further cliques
$C_1,\ldots,C_{\delta-1}\cong K_{\delta+1}$. Choose a root
$a_i\in V(C_i)$ for each $i=0,1,\ldots,\delta-1$, and add exactly the
$\delta$ edges $ua_i$. Then $|V(\mathcal A_{n,\delta})|
 =1+n-\delta^2+(\delta-1)(\delta+1)=n$
and $\delta(\mathcal A_{n,\delta})=\delta$.

\begin{thm}\label{thm:delta-ge3}
For $\delta\ge3$ and $n\ge\delta^2+\delta+1$, let $G$ be a connected
graph of order $n$ with $\delta(G)\ge\delta$. If
$\rho(G)\ge\rho(\mathcal A_{n,\delta})$, then $G$ has an even factor unless
$G\cong\mathcal A_{n,\delta}$.
\end{thm}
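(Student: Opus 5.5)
We argue by contradiction: suppose $G$ is connected of order $n$, $\delta(G)\ge\delta\ge 3$, $\rho(G)\ge\rho(\mathcal A_{n,\delta})$, and $G$ has no even factor. The structural input will be the known characterization of graphs with even factors (Amahashi / the parity-factor theorem of Lovász type): a graph $G$ has an even factor if and only if, for every $S\subseteq V(G)$, $o'(G-S)\le |S|$ in a suitable weighted sense. The version we will use is: $G$ has an even factor iff $\mathrm{odd}(G-S)\le\sum_{v\in S}\lfloor d_G(v)/2\rfloor$ for all $S$, where $\mathrm{odd}(G-S)$ counts components $C$ of $G-S$ with $e_G(C,S)$ odd, each component that is a tree-like bridge obstruction being counted appropriately. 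For our purposes it suffices to extract a weaker consequence: if $G$ has no even factor with $\delta\ge 3$, then there is a set $S$ and a family of components of $G-S$ each joined to $S$ by few edges, with the number of such components exceeding what $S$ can absorb. In particular, the extremal case $|S|=1$ with a cut vertex $u$ of odd-ish obstruction gives $\mathcal A_{n,\delta}$. There, $u$ has degree $\delta$ and each $C_i$ is attached by a single edge (a bridge). Since an even factor has no bridges, $u$ would need even positive degree in an even factor, which is impossible: every edge at $u$ is a bridge, and any edge of an even factor lies on a cycle.

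\textbf{Key reduction: bridges.} Any edge of an even factor lies in a cycle of the factor, so a bridge of $G$ is never used. Hence a vertex all of whose incident edges are bridges forbids an even factor. Conversely, we will show that when $G$ is 2-edge-connected with $\delta\ge 3$ and large spectral radius, an even factor exists. The plan is to split into two cases: (i) some vertex $u$ has all incident edges bridges, or more generally the non-existence stems from bridges; (ii) the obstruction comes from a genuine parity-barrier set $S$ in a bridgeless graph.

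\textbf{Case (i).} Suppose $u$ is incident only to bridges. Then $G-u$ has $d(u)\ge\delta$ components, each with every vertex of degree $\ge\delta$ except possibly the root, where degree $\ge\delta-1$ holds inside the component. Hence each component has at least $\delta+1$ vertices: a component on $k\le\delta$ vertices would force a non-root vertex to have degree $\le k-1<\delta$. Edge maximality under spectral radius monotonicity then gives the result. Adding edges inside components and reattaching bridges increases $\rho$, so $G$ is a spanning subgraph of a graph consisting of $u$ joined by single edges to $d(u)$ cliques of sizes $n_i\ge\delta+1$ summing to $n-1$. Standard Kelmans/transfer arguments with the Perron vector show that $\rho$ is maximized by making one clique as large as possible and the others minimal, and by taking $d(u)=\delta$, since extra components waste vertices. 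This yields exactly $\mathcal A_{n,\delta}$, with equality only when $G\cong\mathcal A_{n,\delta}$. Establishing the root choice inside each clique is immediate by symmetry.

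\textbf{Case (ii), the main obstacle.} If no such bridge obstruction exists, we need to show that $\rho(G)<\rho(\mathcal A_{n,\delta})$ whenever $G$ lacks an even factor. First, we estimate $\rho(\mathcal A_{n,\delta})>n-\delta^2-1$, since it contains $K_{n-\delta^2}$. Next, we use the barrier $S$: the components of $G-S$ that cause the deficit each need at least about $\delta+1-|S|$ vertices and meet $S$ in a restricted number of edges. We then bound $\rho(G)$ by that of the corresponding maximal graph $K_s\vee(K_{n_1}\cup\dots)$ with restricted cross edges, using $\rho\le\max$ row-sum or the bound $\rho\le\sqrt{2e(G)-n+1}$ (Hong). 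Once we show $e(G)$ is at most roughly $\binom{n-\delta^2}{2}+O(\delta^3)$, which falls below what is needed to reach $n-\delta^2-1$ given $n\ge\delta^2+\delta+1$, the case closes. The delicate subcase is when the barrier has $|S|$ small ($|S|=1,2$) with odd cuts of size $\ge 3$. There we plan to compare directly via the Perron vector: move edges to create $\mathcal A_{n,\delta}$-like structure and show each move strictly increases $\rho$, or solve the quotient equitable-partition polynomial and compare its largest root with that of $\mathcal A_{n,\delta}$ at $x=\rho(\mathcal A_{n,\delta})$. This final polynomial comparison is where most of the technical work lies.
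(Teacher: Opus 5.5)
Your Case (i) with $d_G(u)=\delta$ matches the paper's treatment of $(|S|,\sum_{u\in S}d_G(u))=(1,\delta)$: complete the branches to cliques and compress them as in Lemma~\ref{lem:branch-compression}. The rest of the proposal has genuine gaps.

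\textbf{The structural input is never established.} The criterion ``$\mathrm{odd}(G-S)\le\sum_{v\in S}\lfloor d_G(v)/2\rfloor$, counted appropriately'' is not a precise, usable theorem. You also never derive the fact the whole proof hinges on. For a barrier $S$ as in Lemma~\ref{lem:barrier}, the identity $\sum_i(r_i-1)=2|S|-2$ with all $r_i$ odd forces $p\ge|S|(\delta-3)+3\ge\delta$ components joined to $S$ by a single edge. By Lemma~\ref{lem:one-edge-size}, each of these has order at least $\delta+1$.

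\textbf{Your dichotomy is not exhaustive.} The complement of ``some vertex is incident only to bridges'' is not ``$G$ is bridgeless''. By the count above, every connected graph with $\delta\ge3$ and no even factor has bridges. So your bridgeless Case (ii) is vacuous, and the genuine remaining obstructions are left untreated. For example, take $K_{2,3}$ and attach a pendant $K_4$ by a bridge at each of its three degree-$2$ vertices. This graph has $\delta=3$, no even factor and bridges, yet no vertex is incident only to bridges. Your ``delicate subcase'' $|S|\in\{1,2\}$ with odd cuts of size $\ge3$ cannot occur either. For such a barrier each vertex of $S$ has at most one neighbour per component, so $r_i\le|S|$.

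\textbf{The proposed tool cannot close the argument.} Write $e(G)=\binom{n-\delta^2}{2}+X$. Hong's bound $\sqrt{2e(G)-n+1}$ is below $n-\delta^2-1$ if and only if $2X<\delta^2$, so a surplus of order $\delta^3$ is useless. Near the threshold the method genuinely fails. Take $u$ joined by bridges to six copies of $K_6$ ($\delta=5$, $n=37$); this is precisely the subcase $d_G(u)>\delta$ that you dismiss with ``extra components waste vertices''. It has $\sqrt{2e-n+1}=\sqrt{156}>12$, whereas $\rho(\mathcal A_{37,5})<\Delta(\mathcal A_{37,5})=12$.

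\textbf{The missing idea.} Edge counting adds up the edges of all pendant blocks, while the spectral radius of a disjoint union sees only its largest component. The argument should run as follows.
\begin{enumerate}[(a)]
\item Delete $\delta$ of the single-edge attachments. They form a bipartite graph $F$ with $\rho(F)\le\sqrt\delta$.
\item Unless $(|S|,\sum_{u\in S}d_G(u))=(1,\delta)$, show that the set $R$ of vertices outside the $\delta$ chosen components has at least $\delta+1$ elements. Use a further single-edge component, or, when $\delta=3$ and $p=3$, the $|S|-1$ components with $r_i\ge3$, which force $|S|\ge3$.
\item Then every component of $H=G-E(F)$ has order at most $n-\delta^2-\delta$. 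Lemma~\ref{lem-2} gives $\rho(G)\le n-\delta^2-\delta-1+\sqrt\delta<n-\delta^2-1<\rho(\mathcal A_{n,\delta})$.
\end{enumerate}
This single estimate covers both $d_G(u)>\delta$ and all barriers with $|S|\ge2$, with no polynomial comparison needed.
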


For the case $\delta=2$, a different construction is needed. Let $n\ge5$.  The graph $\mathcal B_n$ (shown in Fig.~\ref{fig:obstructions}) is obtained from a clique
$C\cong K_{n-4}$ with a distinguished root $a\in V(C)$ by adding four vertices
$b, x_1, x_2, x_3$ and the six edges $ax_i$ and $bx_i$ for
$i=1,2,3$.

\begin{thm}\label{thm:delta2}
For $n\ge13$, let $G$ be a connected graph of order $n$ with
$\delta(G)=2$. If $\rho(G)\ge\rho(\mathcal B_n)$, then $G$ has an even factor
unless $G\cong\mathcal B_n$.
\end{thm}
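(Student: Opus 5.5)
We will argue by contradiction, using a structural criterion for even factors together with spectral comparison. The criterion is the classical characterization of graphs with an even factor: a graph $G$ has an even factor if and only if for every $S\subseteq V(G)$,
\[
 \mathrm{odd}_G(S) \le \sum_{v\in S}(d_G(v)-2)+2|S|,
\]
where $\mathrm{odd}_G(S)$ counts the components $D$ of $G-S$ with $e_G(D,S)$ odd. We treat components consisting of a single edge, or components with no even subgraph covering them, as contributing "bad" components in the version of Tutte's $f$-factor criterion for $\{2,4,\dots\}$-factors. We may also use the Amahashi / Xiong form for graphs with $\delta\ge2$: $G$ has an even factor iff $\mathrm{odd}(G-S)\le \sum_{v\in S} d_G(v)-2|S|$ for all $S$. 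If this fails, we obtain a set $S$ with $s=|S|$ and $q$ odd components $D_1,\dots,D_q$ with
\[
 q \ge \sum_{v\in S}d(v)-2s+1.
\]

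For the case of Theorem~\ref{thm:delta2} with $\delta=2$, the key step is to bound the structure of $S$. Components of $G-S$ have $e(D_i,S)$ odd. For a leaf-like component, where $D_i$ is a single vertex, we get $d(v)=e(D_i,S)\ge 2$, so in fact $e(D_i,S)\ge3$ is odd. Double counting $e(S,V\setminus S)\ge\sum_i e(D_i,S)$ against $\sum_{v\in S}d(v)\le q+2s-1$ forces most components to send exactly one edge to $S$, while $S$ is small. We then show $G$ is a spanning subgraph of a graph $H$ in one of a few families:
\begin{itemize}
\item[(a)] $s=1$, where $S=\{a\}$, with the odd components attached by a single edge. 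Minimum degree $2$ then forces a configuration that is not connected, or not allowed.
\item[(b)] $S=\{a,b\}$-type configurations, giving $K_{n-4}$ plus a $K_{2,3}$-like gadget, which is exactly $\mathcal B_n$.
\item[(c)] Configurations where $S$ has vertices of degree $2$ or $3$ and several pendant-like components.
\end{itemize}
Since adding edges increases $\rho$, we replace $G$ by the edge-maximal graph in its family. This maximal graph consists of one large clique on most vertices plus bounded gadgets.

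The main obstacle is the spectral comparison: showing that every extremal family member other than $\mathcal B_n$ has $\rho<\rho(\mathcal B_n)$ when $n\ge13$. We plan two tools. First, $\rho(\mathcal B_n)>n-5$, since it contains $K_{n-4}$; more precisely $\rho(\mathcal B_n)$ is slightly above $n-5$, with correction of order $6/n^2$ from the equitable quotient matrix. Second, any competitor whose largest clique has at most $n-5$ vertices and bounded extra edges has $\rho\le n-6+O(1/n)$. We bound this by the maximum row sum on a suitable weighted (Perron-vector) estimate, or through the characteristic polynomial of the equitable quotient. Competitors with a clique of order $n-4$ must have a different bounded gadget. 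For these we compare via Perron-vector edge-shifting, moving edges toward the clique root, or by evaluating quotient characteristic polynomials at $\rho(\mathcal B_n)$. The threshold $n\ge13$ should come exactly from these polynomial comparisons. Equality $\rho(G)=\rho(\mathcal B_n)$ within the maximal family forces $G=H=\mathcal B_n$ by strict monotonicity of $\rho$ under adding edges to connected graphs.

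Finally, $\mathcal B_n$ has no even factor. Take $S=\{a,b\}$: the vertices $x_1,x_2,x_3$ each have degree $2$, so all edges $ax_i$ and $bx_i$ would have to be used. That gives $d_F(b)=3$, which is odd. This confirms that $\mathcal B_n$ is a genuine exception.
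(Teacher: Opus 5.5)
Your outline does not carry out the structural reduction, and the one concrete structural claim it makes is false. In case (a), $S=\{u\}$ with $d_G(u)=2$ is perfectly admissible. Join $u$ by single edges to a vertex $a$ of a $K_{n-4}$ and to a vertex $b$ of a triangle $bcd$. This graph (the paper's $D_n$) is connected and has $\delta=2$. It has no even factor: both edges at $u$ are forced, and then $\{b,c,d\}$ would have an odd cut in $F$. It contains $K_{n-4}$, so its spectral radius also exceeds $n-5$. It is therefore the closest competitor of $\mathcal B_n$, not an excluded case, and it needs its own comparison. The paper writes $\mathcal B_n=D_n-cd+ac+ad$ and uses the Perron-vector identity $x_a=(\lambda^3-\lambda^2-3\lambda+1)x_c>x_c$ in $D_n$.

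Case (b) has two further problems. First, the violating set for $\mathcal B_n$ is $\{x_1,x_2,x_3\}$, with odd components $K_{n-4}$ and $\{b\}$; the set $\{a,b\}$ you name actually satisfies the inequality. Second, in this $s=3$ configuration the three degree-$2$ vertices may have distinct neighbours in the large component. You still have to show that coalescing them onto one vertex strictly increases $\rho$. As a side remark, your first version of the criterion reduces to $\mathrm{odd}_G(S)\le\sum_{v\in S}d_G(v)$ and is wrong. The second version is fine, and by parity its failure gives $q\ge\sum_{v\in S}d_G(v)-2s+2$.

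The more serious gap is that nothing in your plan controls the part of $G$ outside a largest component $C_0$ of $G-S$. The claim that the edge-maximal graph is ``one large clique plus bounded gadgets'' is unsupported. $S$ and the remaining components can be arbitrarily large, and then your bound for ``largest clique at most $n-5$ with bounded extra edges'' does not apply. What is needed is a quantitative dichotomy in $t=n-|C_0|$; one checks that $t\ge4$ always holds.

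\begin{enumerate}[(i)]
\item For $t\ge8$, split $E(G)$ into the edges of $G-S$ and the bipartite graph of $S$--$(V\setminus S)$ edges. The first part has spectral radius at most $n-t-1$. The second has $\sum_{u\in S}d_G(u)=c+2s-2\le 2t-1$ edges. Subadditivity then gives $\rho(G)\le n-t-1+\sqrt{2t-1}<n-5<\rho(\mathcal B_n)$.
\item For $5\le t\le7$, use that every vertex outside $C_0$ has at most one neighbour in $C_0$. Shift those edges to a vertex of $C_0$ with maximum Perron entry, and compare with $K_{n-t}$ and $K_{t+1}$ glued at one vertex. A cubic quotient-polynomial check shows the latter has spectral radius below $n-5$.
\item Only $t=4$ leaves the $D_n$-type and $\mathcal B_n$-type configurations discussed above.
\end{enumerate}

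These steps rely on a refined barrier. It requires $S$ independent, every component of $G-S$ odd with $c=\sum_{u\in S}d_G(u)-2s+2$ exactly, and each vertex of $S$ having at most one neighbour in each component. The inequality you quote does not give these properties directly; you must choose the violating set appropriately, which is what the barrier lemma imported in the paper provides.
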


We also consider the connected version of even factors.  A \emph{connected even factor} of a graph is a spanning connected subgraph in which every vertex has even degree.  Since all graphs considered here have order at least two, the connectedness condition already forces every vertex to have positive degree.  Equivalently, a graph has a connected even factor if and only if it contains a spanning connected Eulerian subgraph; in the usual terminology, such a graph is supereulerian.

For $\delta\geq2$ and $n\geq2\delta+2$, let $\mathcal C_{n,\delta}$ (shown in Fig.~\ref{fig:obstructions}) be the graph obtained from two disjoint cliques $K_{\delta+1}$ and $K_{n-\delta-1}$ by choosing one vertex in each clique and joining the two chosen vertices by exactly one edge.  This joining edge is a bridge.  Hence $\mathcal C_{n,\delta}$ has no connected even factor.  Our connected-even-factor result is as follows.

\begin{thm}\label{thm:connected-even}
Let $G$ be a connected graph of order $n$ with $\delta(G)\geq\delta\geq2$.  Assume that $n\geq8$ if $\delta\in\{2,3\}$, and that $n\geq2\delta+2$ if $\delta\geq4$.  If
\[
        \rho(G)\geq\rho(\mathcal C_{n,\delta}),
\]
then $G$ has a connected even factor unless $G\cong\mathcal C_{n,\delta}$.
\end{thm}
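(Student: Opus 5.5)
We follow the standard structural–spectral scheme. The structural input is Catlin's characterization of supereulerian graphs via the collapsible reduction, together with the elementary obstruction. In practice we use a Jaeger-type sufficient condition: if $G$ is $2$-edge-connected and contains two edge-disjoint spanning trees, then $G$ is supereulerian. We also need the Nash-Williams--Tutte criterion, which says $G$ has two edge-disjoint spanning trees if and only if, for every partition $\mathcal P$ of $V(G)$,
\[
e_G(\mathcal P)\ge 2(|\mathcal P|-1),
\]
where $e_G(\mathcal P)$ denotes the number of edges joining different parts of $\mathcal P$.

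We argue by contrapositive. Suppose $G$ has no connected even factor and $\rho(G)\ge\rho(\mathcal C_{n,\delta})$. By the criterion above, either $G$ has a bridge, or some partition $\mathcal P=\{V_1,\dots,V_t\}$ with $t\ge2$ satisfies $e_G(\mathcal P)\le 2t-3$.

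\emph{Case 1: $G$ has a bridge $e$.} Then $G-e$ has two components with vertex sets $V_1,V_2$ of sizes $n_1\le n_2$. Since $\delta(G)\ge\delta$, each side has at least $\delta+1$ vertices. Hence $G$ is a spanning subgraph of the graph obtained from $K_{n_1}$ and $K_{n_2}$ by joining one vertex in each clique. This graph is maximized in $\rho$ when $n_1=\delta+1$: we show $\rho$ increases as $n_1$ decreases, by comparing with $\rho\approx n_2-1+O(1/n_2)$ via the quotient/characteristic polynomial of the equitable partition into four cells. Strict monotonicity of $\rho$ under adding edges then forces $G\cong\mathcal C_{n,\delta}$.

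\emph{Case 2: $G$ is bridgeless but $e_G(\mathcal P)\le 2t-3$.} Each part $V_i$ sends at least two edges outward, because $G$ is $2$-edge-connected. Summing gives $2e_G(\mathcal P)\ge 2t$, which contradicts $e_G(\mathcal P)\le 2t-3$. Hence no such partition exists, Jaeger applies, and Case 2 is vacuous.

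This reduction is clean only if the two-spanning-trees route suffices. It does: Jaeger's theorem needs only two edge-disjoint spanning trees, and Nash-Williams--Tutte supplies them exactly when the partition inequality holds. The counting above shows that in a bridgeless graph every partition has $e_G(\mathcal P)\ge t$, but that is not $\ge 2t-2$. So Case 2 is not vacuous, and the real work is as follows. Take a violating partition with $e_G(\mathcal P)\le 2t-3$. Each part has degree sum at least $\delta|V_i|$ but few outgoing edges, so all but at most one part are "small pendant blocks" of size at least $\delta+1$ with two or three outgoing edges. We then bound $\rho(G)$ by the graph $K_{n-s}$ plus these small blocks attached via at most $2t-3$ edges, and we compare it with $\mathcal C_{n,\delta}$. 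The comparison uses $\rho(G)\le\rho(K_{n-(\delta+1)(t-1)})+o(1)$ when $t\ge3$, or a direct Perron-vector/edge-switching argument when $t=2$.

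The main obstacle is this spectral comparison for small $n$ relative to $\delta$, namely $n=2\delta+2$ or $\delta\in\{2,3\}$ with $n\ge8$. There the loss of $\delta+1$ vertices per extra block competes against the gain of extra cross edges. We first try the bound $\rho\le\max$ over the quotient matrices of the natural equitable partitions, checking the smallest cases by explicit characteristic polynomials.
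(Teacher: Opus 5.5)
There is a genuine gap in the bridgeless case for $\delta\in\{2,3\}$, and no sharper spectral estimate can close it. Your bridge case and the tree-packing frame (Jaeger plus Nash-Williams--Tutte) match the paper. But the implication you need there, that a partition with $e_G(\mathcal P)\le 2t-3$ forces $\rho(G)<\rho(\mathcal C_{n,\delta})$, is false.

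For $\delta=2$, take $K_{n-2}$ plus a path $uxyw$ with $u\ne w$ in the clique. It is bridgeless with minimum degree $2$, and the partition $\{\{x\},\{y\},V(K_{n-2})\}$ has only $3<4$ crossing edges. Yet its spectral radius exceeds $n-3=\Delta(\mathcal C_{n,2})>\rho(\mathcal C_{n,2})$.

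For $\delta=3$ and $n\ge9$, let $\lambda=\rho(\mathcal C_{n,3})$, let $r$ be the root of the $K_4$ (adjacent to $a\in V(K_{n-4})$), delete an edge $rs$ of that $K_4$, and add $sb$ for a non-root $b$ of $K_{n-4}$. In $\mathcal C_{n,3}$ one has $x_b=x_a/(\lambda-n+6)>x_a/(\lambda-3/(\lambda-2))=x_r$, so Lemma~\ref{lem-4} raises the spectral radius. The new graph is bridgeless with minimum degree $3$, and the partition into $V(K_{n-4})$ and four singletons has $7<8$ crossing edges. Both graphs are Hamiltonian, as the theorem predicts; they simply lack two edge-disjoint spanning trees.

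Structurally, your counting gives nothing here. A part of order at most $\delta$ need only send $\delta\le3$ edges, so $|\mathcal I|(\delta-2)\le2t-6$ is vacuous for $\delta=2$ and permits many small parts for $\delta=3$. The paper therefore abandons tree packing for $\delta\in\{2,3\}$. Lemma~\ref{lem:hsf} converts $\rho(G)>n-5$ (resp.\ $\rho(G)>n-4$) into $e(G)\ge\binom{n-4}{2}+7$ (resp.\ $e(G)\ge\binom{n-3}{2}+4$). Chen's Catlin-type edge theorem (Lemma~\ref{lem:cai}) then gives a spanning Eulerian subgraph; its exceptions are ruled out by $\delta\ge3$ or $n\ge8$ and by the strict edge count. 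Some supereulerian criterion of this strength, or Catlin's reduction method itself, is the missing ingredient.

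For $\delta\ge4$ your route is the paper's, but the structural claim must be corrected. Parts of order at most $\delta$ can occur (for $\delta=4$ the count allows up to $t-3$ of them), and large parts need not send only two or three edges. What $|\mathcal I|(\delta-2)\le2t-6$ actually gives is that at least two parts other than a largest part $V_1$ have order at least $\delta+1$, so $|V_1|\le n-(t-1)-2\delta$. Your comparison with $K_{n-(\delta+1)(t-1)}$ is therefore not justified, and a ``$+o(1)$'' term is not a proof. The paper finishes with $\rho(G)\le\rho(H)+\rho(F)$, $\rho(H)\le|V_1|-1$ and $\rho(F)\le\sqrt{2q-1}\le\sqrt{4t-7}<t+\delta-2$. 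This gives $\rho(G)<n-\delta-2<\rho(\mathcal C_{n,\delta})$ for every admissible $n$, so $n=2\delta+2$ needs no special treatment.

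Two smaller points. First, $t=2$ cannot occur once $G$ is bridgeless, so no separate $t=2$ argument is needed. Second, in the bridge case the monotonicity in $n_1$ should be proved exactly rather than via the asymptotic $n_2-1+O(1/n_2)$. The paper does this by moving one non-root vertex from the smaller clique to the larger and comparing Perron entries.
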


\begin{figure}[htbp]
\centering
\resizebox{\textwidth}{!}{
\begin{tikzpicture}
[scale=0.95,
every node/.style={font=\small},
dot/.style={circle,fill=black,inner sep=1.6pt,outer sep=0pt},
clique/.style={
circle,
draw=black,
thick,
minimum size=1.45cm,
align=center},
bigclique/.style={
circle,
draw=black,
thick,
minimum size=1.8cm,
align=center},
labelnode/.style={
inner sep=1pt,
fill=white},
edge/.style={thick}
]

\begin{scope}[xshift=-7.0cm]
\node[font=\bfseries] at (0,-2.15) {\(\mathcal A_{n,\delta}\)};

\begin{scope}[yshift=-0.79cm]
\node[dot] (u) at (0,0) {};
\node[labelnode] at (0,-0.28) {\(u\)};

\node[bigclique] (C0) at (-2.35,1.20) {\(K_{n-\delta^2}\)};
\node[dot] (a0) at (C0.-27) {};
\node[labelnode] at (-1.5,0.33) {\(a_0\)};
\draw[edge] (u)--(a0);

\node[clique] (C1) at (0,1.92) {\(K_{\delta+1}\)};
\node[dot] (a1) at (C1.-90) {};
\node[labelnode] at (0.36,1.02) {\(a_1\)};
\draw[edge] (u)--(a1);

\node[clique] (Cd) at (2.35,1.20) {\(K_{\delta+1}\)};
\node[dot] (ad) at (Cd.207) {};
\node[labelnode] at (1.72,0.33) {\(a_{\delta-1}\)};
\draw[edge] (u)--(ad);

\foreach \x/\y in {0.9/1.8,1.18/1.65,1.46/1.5}
\fill (\x,\y) circle (1.0pt);
\end{scope}
\end{scope}

\begin{scope}[xshift=0cm]
\node[font=\bfseries] at (0,-2.15) {\(\mathcal B_n\)};

\node[bigclique, minimum size=2.25cm] (C) at (-1.35,0) {\(K_{n-4}\)};
\node[dot] (a) at (C.0) {};
\node[labelnode] at (-0.02,-0.32) {\(a\)};

\node[dot] (b) at (2.05,0) {};
\node[labelnode] at (2.28,0) {\(b\)};

\node[dot] (x1) at (0.9,1.0) {};
\node[labelnode] at (0.9,1.28) {\(x_1\)};

\node[dot] (x2) at (0.9,0) {};
\node[labelnode] at (0.9,-0.28) {\(x_2\)};

\node[dot] (x3) at (0.9,-1.0) {};
\node[labelnode] at (0.9,-1.28) {\(x_3\)};

\draw[edge] (a)--(x1)--(b);
\draw[edge] (a)--(x2)--(b);
\draw[edge] (a)--(x3)--(b);
\end{scope}

\begin{scope}[xshift=5.9cm]
\node[font=\bfseries] at (0,-2.15) {\(\mathcal C_{n,\delta}\)};

\node[clique] (L) at (-1.35,0) {\(K_{\delta+1}\)};
\node[dot] (ca) at (L.0) {};
\node[labelnode] at (-0.42,-0.32) {\(a\)};

\node[bigclique, minimum size=2.05cm] (R) at (1.75,0) {\(K_{n-\delta-1}\)};
\node[dot] (cb) at (R.180) {};
\node[labelnode] at (0.5,-0.32) {\(b\)};

\draw[edge] (ca)--(cb);
\end{scope}
\end{tikzpicture}
}
\caption{The graphs \(\mathcal{A}_{n,\delta}\), \(\mathcal{B}_n\) and \(\mathcal{C}_{n,\delta}\).}
\label{fig:obstructions}
\end{figure}

Proposition~\ref{prop:candidates-no-even} verifies that $\mathcal A_{n,\delta}$ and $\mathcal B_n$ have no even factor, while Proposition~\ref{prop:C-no-connected-even} verifies that $\mathcal C_{n,\delta}$ has no connected even factor.

\section{Some properties of extremal graphs}

In this section, we collect the elementary properties of the extremal graphs.  We first show that the complete-join \(J_{n,\delta}\) is not a genuine obstruction to the existence of an even factor.  We then verify that \(\mathcal{A}_{n,\delta}\) and \(\mathcal{B}_n\) have no even factor, and finally verify the basic obstruction for the connected-even-factor extremal graph \(\mathcal C_{n,\delta}\). 

\begin{prop}\label{prop:false-exception}
For $\delta\geq 2$ and $n\geq 2\delta+2$, the graph
$J_{n,\delta}=K_\delta\vee(K_{n-2\delta+1}\cup(\delta-1)K_1)$ has a
2-factor.  In particular, $J_{n,\delta}$ is not a genuine obstruction to the
existence of an even factor.
\end{prop}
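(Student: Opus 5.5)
We will exhibit an explicit 2-factor consisting of two vertex-disjoint cycles. Write the vertex set of $K_\delta$ as $\{s_1,\ldots,s_\delta\}$, the isolated vertices (before the join) as $t_1,\ldots,t_{\delta-1}$, and the clique $K_{n-2\delta+1}$ as $Q$. The first cycle will live entirely inside $Q$, and the second will cover $\{s_1,\ldots,s_\delta,t_1,\ldots,t_{\delta-1}\}$. Since a 2-factor is in particular an even factor (every vertex has degree $2$, which is positive and even), this suffices for both assertions of the statement.

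\textbf{The cycle in $Q$.} Since $n\ge 2\delta+2$, we have $|V(Q)|=n-2\delta+1\ge 3$. A complete graph on at least three vertices is Hamiltonian, so $Q$ contains a Hamilton cycle $H_1$.

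\textbf{The cycle through $K_\delta$ and the $t_j$.} In $J_{n,\delta}$ each $t_j$ is adjacent to every $s_i$, and the $s_i$ are pairwise adjacent. Consider
\[
H_2:\; s_1\,t_1\,s_2\,t_2\,\cdots\,s_{\delta-1}\,t_{\delta-1}\,s_\delta\,s_1 .
\]
Each consecutive pair $s_it_i$ and $t_is_{i+1}$ is an edge of the join, and the closing pair $s_\delta s_1$ is an edge of $K_\delta$. Because $\delta\ge 2$, $s_\delta\ne s_1$, so this edge exists. The cycle has length $2\delta-1\ge 3$ and visits each of the $2\delta-1$ vertices exactly once, so it is a genuine cycle.

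The cycles $H_1$ and $H_2$ are vertex-disjoint and together cover all $(n-2\delta+1)+\delta+(\delta-1)=n$ vertices, so $H_1\cup H_2$ is a 2-factor of $J_{n,\delta}$.

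There is no real obstacle here. The only point needing care is confirming that both pieces are genuine cycles of length at least $3$. For $H_1$ this is exactly where the hypothesis $n\ge 2\delta+2$ is used; for $H_2$ it is where $\delta\ge 2$ is used, which guarantees the closing edge $s_\delta s_1$ lies inside $K_\delta$.
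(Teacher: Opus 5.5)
Your proposal is correct and follows essentially the same approach as the paper. The paper likewise takes a Hamilton cycle of the clique $K_{n-2\delta+1}$, which has at least $3$ vertices because $n\ge 2\delta+2$. It then covers the $K_\delta$ vertices and the $\delta-1$ formerly isolated vertices by one cycle that alternates between the two sets and closes with a single edge inside $K_\delta$ (for $\delta=2$ this is a triangle).
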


\begin{proof}
Let $S=\{u_1,\ldots,u_\delta\}$ be the vertex set of the $K_\delta$ part, let
$I=\{x_1,\ldots,x_{\delta-1}\}$ be the vertices that are isolated before the
join, and let $W$ be the vertex set of the clique $K_{n-2\delta+1}$.  Since
$|W|=n-2\delta+1\geq 3$, the clique on $W$ has a Hamilton cycle, denoted by
$C_W$.

It remains to cover $S\cup I$ by one cycle.  Consider
\[
C_0=x_1u_1x_2u_2\cdots x_{\delta-1}u_{\delta-1}u_\delta x_1.
\]
When $\delta=2$, the cycle is simply $x_1u_1u_2x_1$.
The disjoint union $C_0\cup C_W$ is a spanning 2-regular subgraph of $J_{n,\delta}$.  Hence $J_{n,\delta}$ has a 2-factor.
\end{proof}

\begin{prop}\label{prop:candidates-no-even}
The graph $\mathcal A_{n,\delta}$ has no even factor for every
$\delta\geq 3$ and $n\geq \delta^2+\delta+1$.  The graph $\mathcal B_n$
has no even factor for every $n\geq 5$.
\end{prop}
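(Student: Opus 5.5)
The plan is to use a parity (cut) argument in both cases: an even factor $F$ meets every edge cut in an even number of edges, since the degree sum of $F$ over one side of the cut is even and the edges of $F$ inside that side contribute an even amount. Equivalently, for any $X\subseteq V(G)$, $e_F(X,V\setminus X)\equiv \sum_{v\in X} d_F(v)\equiv 0 \pmod 2$. We combine this with the requirement that every vertex has positive even degree in $F$.

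\textbf{The graph $\mathcal A_{n,\delta}$.} Each edge $ua_i$ is a bridge: deleting it separates the clique $C_i$ from the rest of the graph. Take $X=V(C_i)$. Then $e_F(X,V\setminus X)\in\{0,1\}$, and parity forces it to be $0$, so $ua_i\notin E(F)$. This holds for every $i=0,1,\ldots,\delta-1$. Since the only edges at $u$ are the $ua_i$, we get $d_F(u)=0$, which contradicts positivity. The hypotheses $\delta\ge3$ and $n\ge\delta^2+\delta+1$ are only needed so that the graph is well defined (with $|C_0|\ge\delta+1$). This argument does not even use $\delta\ge 3$.

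\textbf{The graph $\mathcal B_n$.} Here $b$ has neighbours $x_1,x_2,x_3$, and each $x_i$ has exactly the two neighbours $a$ and $b$. Since $d_F(x_i)$ is positive and even, it equals $2$, so both $ax_i$ and $bx_i$ lie in $F$ for every $i$. Then $d_F(b)=3$, which is odd, a contradiction. Nothing beyond $n\ge5$ is needed; this only ensures that $C\cong K_{n-4}$ is nonempty and that $a$ exists.

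There is no real obstacle here. The only point requiring care is to state the cut-parity lemma cleanly: summing $d_F$ over $X$ counts each edge of $F[X]$ twice and each edge of $F$ leaving $X$ once.
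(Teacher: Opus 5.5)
Your proof is correct and follows essentially the same route as the paper: for $\mathcal A_{n,\delta}$ the cut-parity identity applied to $X=V(C_i)$ excludes every edge $ua_i$ from $F$, so $d_F(u)=0$, and for $\mathcal B_n$ the degree-two vertices $x_1,x_2,x_3$ force all six edges into $F$, giving $d_F(b)=3$. One small correction: well-definedness of $\mathcal A_{n,\delta}$ only needs $n-\delta^2\ge 1$, while $|C_0|\ge\delta+1$ is what gives minimum degree $\delta$; neither affects the argument.
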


\begin{proof}
We first consider $\mathcal A_{n,\delta}$.  Let
$C_0,C_1,\ldots,C_{\delta-1}$ be its clique branches, and let
$r_i$ be the root of $C_i$ adjacent to the central vertex $u$.  Thus
\[
\partial_{\mathcal A_{n,\delta}}V(C_i)=\{ur_i\},
\qquad 0\leq i\leq \delta-1,
\]
where $\partial_G X$ denotes the set of edges of $G$ with exactly one
end in $X$.

Suppose that $F$ is a spanning subgraph of $\mathcal A_{n,\delta}$ such
that every vertex has even degree in $F$.  For every $X\subseteq
V(\mathcal A_{n,\delta})$, we have
\[
\sum_{x\in X}d_F(x)
=
2e_F(X)+e_F(X,V(\mathcal A_{n,\delta})\setminus X).
\]
Hence
\[
e_F(X,V(\mathcal A_{n,\delta})\setminus X)
\equiv
\sum_{x\in X}d_F(x)
\equiv 0 \pmod 2.
\]
Applying this with $X=V(C_i)$ gives
\[
\mathbf 1_{\{ur_i\in E(F)\}}
=
e_F(V(C_i),V(\mathcal A_{n,\delta})\setminus V(C_i))
\equiv 0 \pmod 2.
\]
Since the left-hand side is either $0$ or $1$, it follows that
$\mathbf 1_{\{ur_i\in E(F)\}}=0$ for every $0\leq i\leq \delta-1$.
Therefore
\[
d_F(u)=\sum_{i=0}^{\delta-1}\mathbf 1_{\{ur_i\in E(F)\}}=0.
\]
Thus every even-degree spanning subgraph of $\mathcal A_{n,\delta}$
isolates $u$.  In particular, no such subgraph can be an even factor,
since an even factor requires every vertex to have positive even
degree.

We next consider $\mathcal B_n$.  With the notation used in the
construction of $\mathcal B_n$, let $a$ be the distinguished vertex in
the clique $K_{n-4}$, let $b$ be the remaining vertex of the $K_{2,3}$
part, and let $x_1,x_2,x_3$ be the three vertices of degree two.  Thus
\[
N_{\mathcal B_n}(x_i)=\{a,b\}\quad (1\leq i\leq 3),
\qquad
N_{\mathcal B_n}(b)=\{x_1,x_2,x_3\}.
\]
Assume, to the contrary, that $F$ is an even factor of $\mathcal B_n$.
For $1\leq i\leq 3$, set
$\alpha_i=\mathbf 1_{\{ax_i\in E(F)\}}$ and $\beta_i=\mathbf 1_{\{bx_i\in E(F)\}}$.
Since $F$ is an even factor, $d_F(x_i)$ is positive and even.  On the
other hand, $d_{\mathcal B_n}(x_i)=2$, so
\[
2\geq d_F(x_i)=\alpha_i+\beta_i\in 2\mathbb Z_{>0}.
\]
Consequently, $\alpha_i+\beta_i=2$, and hence
$\alpha_i=\beta_i=1$ for every $1\leq i\leq 3$.
It follows that
\[
d_F(b)=\sum_{i=1}^3\mathbf 1_{\{bx_i\in E(F)\}}=
\sum_{i=1}^3\beta_i=3,
\]
which is not even.  This contradicts the definition of an even factor.
Therefore $\mathcal B_n$ has no even factor.
\end{proof}

\begin{prop}\label{prop:C-no-connected-even}
The graph $\mathcal C_{n,\delta}$ has no connected even factor for every $\delta\geq2$ and $n\geq2\delta+2$.
\end{prop}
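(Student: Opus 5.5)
We argue by contradiction, using the same parity-cut argument as for $\mathcal A_{n,\delta}$ in Proposition~\ref{prop:candidates-no-even}. Let $X$ be the vertex set of the clique $K_{\delta+1}$ in $\mathcal C_{n,\delta}$, let $a\in X$ and $b\notin X$ be the two chosen vertices, and note that
\[
\partial_{\mathcal C_{n,\delta}}X=\{ab\}.
\]
Suppose, to the contrary, that $F$ is a connected even factor of $\mathcal C_{n,\delta}$.

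First, every vertex has even degree in $F$, so
\[
e_F(X,V(\mathcal C_{n,\delta})\setminus X)\equiv\sum_{x\in X}d_F(x)\equiv 0\pmod 2.
\]
Since the only edge of $\mathcal C_{n,\delta}$ leaving $X$ is the bridge $ab$, the left-hand side equals $\mathbf 1_{\{ab\in E(F)\}}\in\{0,1\}$. Hence $ab\notin E(F)$, and therefore $e_F(X,V(\mathcal C_{n,\delta})\setminus X)=0$.

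Second, both $X$ and its complement are nonempty: $|X|=\delta+1\ge 3$, and $n-\delta-1\ge\delta+1\ge3$ because $n\ge 2\delta+2$. A spanning subgraph with no edge between two nonempty complementary vertex sets is disconnected. This contradicts the connectedness of $F$, so $\mathcal C_{n,\delta}$ has no connected even factor.

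There is essentially no obstacle here. The only points to check are the parity identity $\sum_{x\in X}d_F(x)=2e_F(X)+e_F(X,\overline X)$, which the paper already uses, and the nonemptiness of both sides, which follows from the hypothesis $n\ge 2\delta+2$. Equivalently, one can simply observe that an Eulerian subgraph never contains a bridge of itself: if $ab\in E(F)$, it would be a bridge of $F$, since any $a$–$b$ path in $F$ avoiding $ab$ would be an $a$–$b$ path in $\mathcal C_{n,\delta}$ avoiding its bridge, which is impossible. But every edge of an even graph lies on a cycle, so this cannot happen.
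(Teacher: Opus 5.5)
Your proposal is correct and follows essentially the same parity-cut argument as the paper: you take $X=V(K_{\delta+1})$, note that $e_F(X,V(\mathcal C_{n,\delta})\setminus X)$ is even but at most $1$, hence $0$, which contradicts the connectedness of $F$. Your closing remark that an even graph has no bridge is a valid equivalent reformulation.
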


\begin{proof}
Let $X$ be the vertex set of the clique $K_{\delta+1}$ in $\mathcal C_{n,\delta}$.  Then
$e_{\mathcal C_{n,\delta}}(X,V(\mathcal C_{n,\delta})\setminus X)=1$. 
Suppose that $F$ is a spanning connected subgraph of $\mathcal C_{n,\delta}$ such that every vertex has even degree in $F$.  Since $F$ is connected and $\emptyset\neq X\subset V(\mathcal C_{n,\delta})$, we have 
$e_F(X,V(\mathcal C_{n,\delta})\setminus X)\geq1$.
On the other hand,
\[
        \sum_{v\in X}d_F(v)=2e_F(X)+e_F(X,V(\mathcal C_{n,\delta})\setminus X).
\]
Therefore, we have 
$e_F(X,V(\mathcal C_{n,\delta})\setminus X)\equiv0\pmod2$. 
Since this number is at most $e_{\mathcal C_{n,\delta}}(X,V(\mathcal C_{n,\delta})\setminus X)=1$, it must be $0$, contradicting the connectedness of $F$.  Therefore $\mathcal C_{n,\delta}$ has no connected even factor.
\end{proof}

\section{Preliminaries}

Before proving Theorems~\ref{thm:delta-ge3} and \ref{thm:delta2}, we record the preliminary results that we need.

\begin{lemma}[\cite{Bapat2010}]\label{lem-1}
Let $G$ be a connected graph and let $H$ be a subgraph of $G$.  Then
$\rho(H)\leq \rho(G)$. Moreover, equality holds if and only if $H=G$.  In particular, if $H$ is a proper subgraph of the connected graph $G$, then $\rho(H)<\rho(G)$.
\end{lemma}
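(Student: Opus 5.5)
The plan is to derive both parts from the Perron--Frobenius theorem applied to nonnegative irreducible matrices, using the Rayleigh quotient characterization $\rho(M)=\max_{\|{\mb y}\|=1}{\mb y}^TM{\mb y}$ for symmetric $M$. Let $H$ be a subgraph of the connected graph $G$. First pad $H$ with isolated vertices so that it becomes a spanning subgraph on $V(G)$; this changes neither its spectral radius nor its edge set. Then $0\le A(H)\le A(G)$ entrywise.

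For the inequality, let ${\mb y}$ be a unit eigenvector of $A(H)$ for $\rho(H)$. By Perron--Frobenius for nonnegative matrices, we may take ${\mb y}\ge 0$, or simply replace ${\mb y}$ by $|{\mb y}|$, which does not decrease the quadratic form. Then
\[
\rho(H)={\mb y}^TA(H){\mb y}\le {\mb y}^TA(G){\mb y}\le\rho(G).
\]

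For the equality case, suppose $\rho(H)=\rho(G)$. Then both inequalities above are equalities. Equality in the second one, together with the Rayleigh principle, forces ${\mb y}$ to be an eigenvector of $A(G)$ for $\rho(G)$. Since $G$ is connected, $A(G)$ is irreducible, so this eigenvector is, up to scaling, the Perron vector ${\mb x}$, and it is strictly positive. Equality in the first inequality gives
\[
\sum_{ij}\bigl(A(G)-A(H)\bigr)_{ij}y_iy_j=0.
\]
Every term here is nonnegative and every $y_i>0$, so $A(G)=A(H)$. Since $V(H)=V(G)$ after padding, $H=G$ as spanning subgraphs. Read back, this means $H$ contains every edge of $G$. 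The conclusion ``$H=G$'' is then read with $H$ containing all vertices of $G$: a proper subgraph with fewer vertices also has strictly smaller radius, because it has fewer edges once padded. The final sentence of the lemma is the contrapositive. The converse direction of the equality statement is trivial.

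The main subtle step is justifying that the maximizing eigenvector ${\mb y}$ of $A(H)$ can be chosen nonnegative even though $H$ may be disconnected. It can: Perron--Frobenius for reducible nonnegative matrices still supplies a nonnegative eigenvector for the spectral radius, and alternatively the inequality ${\mb y}^TA{\mb y}\le|{\mb y}|^TA|{\mb y}|$ handles it. Strict positivity is needed only for the eigenvector of $G$, which comes from the connectedness of $G$.
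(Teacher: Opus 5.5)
The paper gives no proof of this lemma; it is quoted from Bapat's book, so there is no in-paper argument to compare against. Your proof is the standard Rayleigh-quotient and Perron--Frobenius argument, and it is correct.

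\begin{itemize}
\item \emph{Monotonicity.} With $\mb{y}\ge 0$, or with $|\mb{y}|$ in place of $\mb{y}$, the chain $\rho(H)=\mb{y}^TA(H)\mb{y}\le\mb{y}^TA(G)\mb{y}\le\rho(G)$ holds because $A(G)-A(H)\ge 0$ entrywise. Padding $H$ with isolated vertices is harmless, since $\lambda_1(A(H))\ge 0$.
\item \emph{Equality case.} The Rayleigh principle, together with simplicity of $\rho(G)$ for connected $G$, forces $\mb{y}$ to be the strictly positive Perron vector of $G$. Then $\mb{y}^T(A(G)-A(H))\mb{y}=0$ forces $A(H)=A(G)$.
\end{itemize}

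One step needs an explicit justification: why a subgraph $H$ missing a vertex $v$ of $G$ has fewer edges after padding. Because $G$ is connected with at least two vertices, $v$ is incident with some edge of $G$, and that edge cannot lie in $H$. Hence padded $H$ is a proper spanning subgraph, and your equality analysis applies.

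With that sentence added, the argument is complete and self-contained. Your $|\mb{y}|$ remark already settles the only other delicate point, the choice of a nonnegative eigenvector when $H$ is disconnected.
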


\begin{lemma}[\cite{AiImKimLeeZhang2024}]\label{lem-2}
Let $G,H_1,\ldots,H_k$ be graphs on the same vertex set, and suppose that $E(G)=\bigcup_{i=1}^k E(H_i)$.
Then
\[
        \rho(G)\leq \sum_{i=1}^k \rho(H_i).
\]
Consequently, if $E(G)=E(H)\cup E(F)$ and $E(H)\cap E(F)=\emptyset$, then \(\rho(G)\leq \rho(H)+\rho(F)\). 
\end{lemma}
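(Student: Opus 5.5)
The plan is a direct Rayleigh-quotient argument, combined with the fact that the Perron eigenvector can be taken entrywise nonnegative. First we fix a suitable eigenvector of $A(G)$. If $G$ is connected, we take its Perron vector ${\mb x}$. If $G$ is disconnected, we pick a component $G_0$ with $\rho(G_0)=\rho(G)$, take the Perron vector of $G_0$, and extend it by zeros. In both cases this gives a unit vector ${\mb x}\ge 0$ with
\[
\rho(G)={\mb x}^{T}A(G){\mb x}=2\sum_{uv\in E(G)}x_ux_v .
\]
The Perron--Frobenius theorem for nonnegative (possibly reducible) matrices gives the same conclusion directly.

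Next we compare adjacency matrices entrywise. Since $E(G)=\bigcup_{i=1}^k E(H_i)$, every edge $uv$ of $G$ lies in at least one $H_i$. Hence $A(G)_{uv}\le\sum_{i}A(H_i)_{uv}$ for all $u,v$, that is,
\[
A(G)\le \sum_{i=1}^k A(H_i)\quad\text{entrywise}.
\]
Because ${\mb x}\ge 0$, this gives
\[
\rho(G)={\mb x}^{T}A(G){\mb x}\le\sum_{i=1}^k{\mb x}^{T}A(H_i){\mb x}.
\]
Each term on the right is at most $\lambda_1(A(H_i))=\rho(H_i)$, by the Rayleigh principle for the real symmetric matrix $A(H_i)$ and the fact that ${\mb x}$ is a unit vector. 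Summing these bounds yields $\rho(G)\le\sum_i\rho(H_i)$. The consequence for $E(G)=E(H)\cup E(F)$ is the case $k=2$; disjointness of the two edge sets is not even needed.

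The only delicate point is the first step. The inequality ${\mb x}^{T}A(G){\mb x}\le\sum_i{\mb x}^{T}A(H_i){\mb x}$ uses the entrywise domination, and this is valid only for a nonnegative vector ${\mb x}$. Moreover, the edges covered by several $H_i$ are counted more than once on the right, which again goes in the right direction only when ${\mb x}\ge0$. So the proof must use an eigenvector attaining $\rho(G)$ that is nonnegative, which is why the case of disconnected $G$ has to be treated as above.
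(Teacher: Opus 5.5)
The paper gives no proof of this lemma: it imports it from the cited reference and uses it as a black box. Your argument is a correct and complete proof, and it is the standard one.

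Your key steps all hold:
\begin{itemize}
\item You take a nonnegative unit eigenvector $\mb{x}$ of $A(G)$ for $\rho(G)$, namely the Perron vector of a component attaining $\rho(G)$, padded with zeros.
\item You use the entrywise bound $A(G)\le\sum_{i=1}^k A(H_i)$.
\item Together with $\mb{x}\ge 0$ and the Rayleigh principle, these give $\rho(G)=\mb{x}^{T}A(G)\mb{x}\le\sum_{i=1}^k\mb{x}^{T}A(H_i)\mb{x}\le\sum_{i=1}^k\lambda_1(A(H_i))=\sum_{i=1}^k\rho(H_i)$.
\end{itemize}

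One comparison is worth noting. Every application in the paper uses only the edge-disjoint form: $G$ is split into the crossing edges $F$ and $H=G-E(F)$ in the proofs of all three main theorems. In that case $A(G)=A(H)+A(F)$ holds exactly. So $\rho(G)\le\rho(H)+\rho(F)$ already follows from Weyl's inequality $\lambda_1(A+B)\le\lambda_1(A)+\lambda_1(B)$ for real symmetric matrices, with no sign condition on any vector.

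Your use of a nonnegative eigenvector is what lets you handle overlapping edge sets. You identify this correctly as the delicate point, since the entrywise domination $A(G)\le\sum_i A(H_i)$ only transfers to quadratic forms at nonnegative vectors. Your version therefore proves the more general covering statement, at the cost of invoking Perron--Frobenius for possibly disconnected $G$.
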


\begin{lemma}[\cite{Bapat2010}]\label{lem-3}
Let $G$ be a bipartite graph with $e(G)$ edges.  Then \(\rho(G)\leq \sqrt{e(G)}\). 
\end{lemma}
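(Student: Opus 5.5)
The statement is Lemma~\ref{lem-3}, a classical bound, and I would prove it with a short linear-algebra argument on the spectrum of a bipartite graph. Let $G$ be bipartite with parts $X$ and $Y$, $n=|V(G)|$, and eigenvalues $\lambda_1\ge\cdots\ge\lambda_n$ of $A(G)$.

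The first step is to show that the spectrum is symmetric about $0$. If $A(G)\mb x=\lambda\mb x$, define $\mb x'$ by flipping the sign of $\mb x$ on the coordinates indexed by $Y$. Every edge of $G$ joins $X$ to $Y$, so $A(G)\mb x'=-\lambda\mb x'$. The same map sends a basis of the $\lambda$-eigenspace to a basis of the $(-\lambda)$-eigenspace, so $\lambda$ and $-\lambda$ have equal multiplicity. In particular $\lambda_n=-\lambda_1=-\rho(G)$.

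The second step uses the trace identity
\[
\sum_{i=1}^n\lambda_i^2=\operatorname{tr}\bigl(A(G)^2\bigr)=\sum_{v}d(v)=2e(G).
\]
Since $\lambda_1$ and $\lambda_n$ are distinct entries of the spectrum (or both equal $0$ when $e(G)=0$), we get
\[
2\rho(G)^2=\lambda_1^2+\lambda_n^2\le 2e(G),
\]
which gives $\rho(G)\le\sqrt{e(G)}$. If $n=1$ there are no edges and the claim is trivial.

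None of these steps is a real obstacle. The only point needing care is the symmetry argument, in particular that $\lambda_1$ and $\lambda_n$ occupy different positions in the multiset of eigenvalues, so that both squares appear in the trace sum. This holds automatically once $n\ge2$, since they are different indices $1$ and $n$ in the ordering.
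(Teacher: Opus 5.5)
Your proof is correct: the sign flip on one side of the bipartition gives $\lambda_n=-\rho(G)$, and then $2\rho(G)^2=\lambda_1^2+\lambda_n^2\le \operatorname{tr}\bigl(A(G)^2\bigr)=2e(G)$. The paper gives no proof of this lemma and only cites it from Bapat's book, and your argument is the standard proof of this classical bound.
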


\begin{lemma}\label{lem:hong-component}
Let $F$ be a graph with $q\geq1$ edges.  Then \(\rho(F)\leq \sqrt{2q-1}\). 
\end{lemma}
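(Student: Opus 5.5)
This is Hong's bound $\rho(F)\le\sqrt{2m-n+1}$ for connected graphs without isolated vertices, extended to arbitrary graphs with $q\ge1$ edges. The plan is to reduce to a single connected component and then apply a Rayleigh-quotient (or walk-counting) estimate.

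\emph{Reduction to one component.} The spectral radius of a disjoint union is the maximum over its components, and isolated vertices contribute $0$. Since $q\ge1$, some component $H$ has at least one edge and satisfies $\rho(F)=\rho(H)$. Let $H$ have $m\ge1$ edges and $p\ge2$ vertices. Then $m\le q$, and $2m-p+1\le 2m-1\le 2q-1$ because $p\ge2$. So it suffices to prove $\rho(H)\le\sqrt{2m-p+1}$ for a connected graph $H$ with $p\ge2$ vertices.

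\emph{Hong's inequality for connected $H$.} Let $\mathbf x$ be the Perron vector of $H$ and $\rho=\rho(H)$. For each vertex $v$,
\[
\rho x_v=\sum_{w\sim v}x_w,
\]
and by Cauchy--Schwarz,
\[
\rho^2x_v^2\le d(v)\sum_{w\sim v}x_w^2 .
\]
We need a bound that sees the global edge count, and the standard trick is
\[
\sum_{w\sim v}x_w^2\le 1-x_v^2-\sum_{w\not\sim v,\,w\neq v}x_w^2 .
\]
A cleaner route: $\rho^2 x_v=\sum_{w}(A^2)_{vw}x_w=d(v)x_v+\sum_{w\ne v}(A^2)_{vw}x_w$, and then bound the second term using that the vertices at distance $\ge1$ from $v$ have small weight. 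The original argument of Hong gives
\[
\rho^2x_v^2\le d(v)\Bigl(1-\sum_{w\notin N(v)}x_w^2\Bigr).
\]
Summing over $v$ and using $d(v)\ge1$ together with $\sum_v d(v)=2m$ yields
\[
\rho^2\le 2m-\sum_v\sum_{w\notin N(v)}x_w^2\le 2m-(p-1),
\]
since each $x_w^2$ is counted for at least the $p-1-d(w)+1\ge$ non-neighbours. More precisely, each $w$ is counted for every $v\notin N(w)$, and $v=w$ itself is one such vertex. This double-counting is the main obstacle, and I would follow Hong's original bookkeeping carefully here.

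\emph{Fallback.} If the bookkeeping fails, the required weaker bound $\sqrt{2q-1}$ can be obtained more crudely from Stanley's bound $\rho\le(-1+\sqrt{1+8q})/2$. Indeed, $\sqrt{2q-1}\ge(-1+\sqrt{1+8q})/2$ is equivalent to $2\sqrt{2q-1}+1\ge\sqrt{1+8q}$. Squaring, this becomes $8q-3+4\sqrt{2q-1}\ge 1+8q$, that is, $\sqrt{2q-1}\ge1$, which holds for all $q\ge1$. So citing Stanley's inequality would give the lemma immediately, and I would use that route if the component argument is not written out.
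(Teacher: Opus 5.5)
Your reduction to a single component $H$ with $m\ge1$ edges and $p\ge2$ vertices, followed by $2m-p+1\le 2m-1\le 2q-1$, is exactly the paper's proof. The paper then simply cites Hong's bound $\rho(H)\le\sqrt{2m-p+1}$ rather than reproving it. Your sketched re-derivation of Hong's bound, however, fails at the step you flagged.

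After summing $\rho^2x_v^2\le d(v)\bigl(1-\sum_{w\notin N(v)}x_w^2\bigr)$ and weakening every $d(v)$ in the subtracted term to $1$, the coefficient of $x_w^2$ is $|\{v: v\notin N(w)\}|=p-d(w)$, not $p-1$. The claimed bound $\sum_w(p-d(w))x_w^2\ge p-1$ is false in general. For the star $K_{1,p-1}$ one has $x^2=\tfrac12$ at the centre and $x^2=\tfrac{1}{2(p-1)}$ at each leaf, so the sum is $\tfrac p2<p-1$ for $p\ge3$. The repair is to split off the term $w=v$ and keep $d(v)x_v^2$ with its true weight. Then use $d(v)\ge1$ only on the non-neighbours $w\ne v$:
\[
\rho^2\le 2m-\sum_v d(v)x_v^2-\sum_v\sum_{w\notin N(v),\,w\ne v}x_w^2
=2m-\sum_w\bigl(d(w)+p-1-d(w)\bigr)x_w^2=2m-p+1 .
\]

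Your fallback, on the other hand, is a complete and genuinely different proof. Stanley's bound $\rho\le\frac{-1+\sqrt{1+8q}}{2}$ holds for every graph with $q$ edges, with no connectivity or isolated-vertex hypothesis, so the component reduction becomes unnecessary. Your squaring argument correctly reduces $\sqrt{2q-1}\ge\frac{-1+\sqrt{1+8q}}{2}$ to $\sqrt{2q-1}\ge1$, i.e.\ $q\ge1$. This shows the lemma is a weakening of Stanley's theorem, tight only at $q=1$.

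The paper's route through Hong's bound genuinely needs the reduction, since Hong's inequality fails for graphs with isolated vertices. In exchange, its intermediate bound $\sqrt{2m-p+1}$ is sharper on sparse components (it is exact for stars). Since the lemma and its later use depend only on $q$, either citation suffices.
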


\begin{proof}
Let $F_0$ be a connected component of $F$ such that $\rho(F)=\rho(F_0)$, and write $n_0=|V(F_0)|$ and $q_0=e(F_0)$.  Since $F_0$ contains an edge, $n_0\geq2$ and $1\leq q_0\leq q$.  Hong's bound~\cite{Hong1988} for a connected graph of order $n_0$ and size $q_0$ gives
\[
        \rho(F_0)\leq\sqrt{2q_0-n_0+1}.
\]
As $n_0\geq2$, we have
$2q_0-n_0+1\leq2q_0-1\leq2q-1$. Therefore
\[
        \rho(F)=\rho(F_0)\leq\sqrt{2q_0-n_0+1}\leq\sqrt{2q-1},
\]
as desired.
\end{proof}

\begin{lemma}[\cite{WuXiaoHong2005}]\label{lem-4}
Let $G$ be a connected graph, and let $\mb x$ be the  Perron vector of $G$.  Suppose $uv\in E(G)$ and $uw\notin E(G)$.  Let $G'=G-uv+uw$. If $\mb{x}_w\ge \mb{x}_v$, then $\rho(G')>\rho(G)$.
\end{lemma}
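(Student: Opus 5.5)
This is Lemma~\ref{lem-4}, the classical rotation lemma of Wu, Xiao and Hong; I would prove it with the Rayleigh quotient, as follows.

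Let $\rho=\rho(G)$ and let $\mb x$ be the Perron vector of $G$, so $\mb x>0$ and $\|\mb x\|=1$. The graph $G'$ need not be connected, but $\rho(G')=\max_{\|\mb y\|=1}\mb y^TA(G')\mb y$ holds regardless, so
\[
\rho(G')\ge \mb x^TA(G')\mb x=\mb x^TA(G)\mb x+2x_u(x_w-x_v)=\rho+2x_u(x_w-x_v)\ge \rho,
\]
using $x_u>0$ and $x_w\ge x_v$.

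It remains to rule out $\rho(G')=\rho$, which is the only delicate step. Suppose equality holds. Then $\mb x$ attains the maximum of the Rayleigh quotient of $A(G')$, so $\mb x$ is an eigenvector of $A(G')$ for $\rho$: $A(G')\mb x=\rho\mb x=A(G)\mb x$.

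Compare the $w$-th coordinates. In $G'$ the vertex $w$ gains the neighbor $u$ and loses none, since $w\ne v$ and $w\ne u$. Hence
\[
(A(G')\mb x)_w=(A(G)\mb x)_w+x_u>(A(G)\mb x)_w,
\]
which contradicts $A(G')\mb x=A(G)\mb x$. Therefore $\rho(G')>\rho(G)$.

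The only subtlety is that Perron--Frobenius cannot be applied to $G'$ directly, because $G'$ may be disconnected. The variational characterization avoids this, since it needs no irreducibility.
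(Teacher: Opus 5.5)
Your proof is correct: the Rayleigh-quotient bound $\rho(G')\geq\rho+2x_u(x_w-x_v)$, followed by the observation that equality would make $\mb x$ an eigenvector of $A(G')$, which fails at coordinate $w$ because $x_u>0$, is complete and handles the case $x_w=x_v$ properly. The paper gives no proof of its own and simply cites Wu, Xiao and Hong; your argument is the standard one from that source, so there is nothing to add.
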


Consider an $n\times n$ real symmetric matrix
$$
M=\left(\begin{array}{cccc}
M_{1,1} & M_{1,2} & \cdots & M_{1, m} \\
M_{2,1} & M_{2,2} & \cdots & M_{2, m} \\
\vdots & \vdots & \ddots & \vdots \\
M_{m, 1} & M_{m, 2} & \cdots & M_{m, m}
\end{array}\right)
$$
whose rows and columns are partitioned according to a partition
$X_{1},X_{2},\ldots,X_{m}$ of $\{1,2,\ldots,n\}$.
The {\it quotient matrix} $\mathcal{B}$ of $M$ is the $m\times m$ matrix whose
entries are the average row sums of the blocks $M_{i,j}$.  The partition is
{\it equitable} if each block $M_{i,j}$ has constant row sum.

\begin{lemma}[\cite{You}]\label{lem:equitable}
Let $M$ be a square matrix with an equitable partition $\pi$
and let $M_{\pi}$ be the corresponding quotient matrix.
Then every eigenvalue of $M_{\pi}$ is an eigenvalue of $M$.
Furthermore, if $M$ is nonnegative and $M_{\pi}$ is irreducible,
then the largest eigenvalues of $M$ and $M_{\pi}$ are equal.
\end{lemma}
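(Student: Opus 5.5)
The plan is to encode the partition in its characteristic matrix and transport eigenvectors of the quotient matrix up to $M$. Let $P$ be the $n\times m$ matrix with $P_{v,i}=1$ if $v\in X_i$ and $P_{v,i}=0$ otherwise. Its columns are nonzero with disjoint supports, so $P$ has full column rank $m$. The first step is to check that equitability is exactly the matrix identity
\[
MP=PM_\pi .
\]
Fix a row index $v\in X_i$ and a column index $j$. Then $(MP)_{v,j}$ is the row sum of row $v$ inside the block $M_{i,j}$. This sum is constant over $v\in X_i$ by equitability, and it equals $(M_\pi)_{i,j}=(PM_\pi)_{v,j}$.

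\emph{First assertion.} Let $M_\pi \mb y=\theta \mb y$ with $\mb y\neq \mb 0$. Then
\[
M(P\mb y)=PM_\pi\mb y=\theta P\mb y,
\]
and $P\mb y\neq\mb 0$ because $P$ has full column rank. Hence $\theta$ is an eigenvalue of $M$, with eigenvector the "lift" $P\mb y$, which is constant on each cell. This works over $\mathbb C$ as well, so no symmetry is needed here.

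\emph{Second assertion.} Now assume $M\ge 0$ and $M_\pi$ irreducible. Since $M\geq0$, every block row sum is nonnegative, so $M_\pi\ge0$. By the Perron--Frobenius theorem for irreducible nonnegative matrices, $M_\pi$ has an eigenvector $\mb y$ with all entries positive for the eigenvalue $\rho(M_\pi)$, and $\rho(M_\pi)$ is its largest eigenvalue. Then $P\mb y$ is an entrywise positive eigenvector of $M$ with eigenvalue $\rho(M_\pi)$. It remains to show that a nonnegative matrix with a positive eigenvector has that eigenvalue equal to its spectral radius. This is the step needing care, since $M$ itself need not be irreducible.

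To handle it I will use the weak form of Perron--Frobenius: $\rho(M)$ is an eigenvalue of the nonnegative matrix $M^{T}$ with some eigenvector $\mb z\ge\mb 0$, $\mb z\neq\mb 0$. Then
\[
\rho(M)\,\mb z^{T}P\mb y=\mb z^{T}MP\mb y=\rho(M_\pi)\,\mb z^{T}P\mb y .
\]
Since $P\mb y>\mb 0$ and $\mb z$ is nonnegative and nonzero, $\mb z^{T}P\mb y>0$, and so $\rho(M)=\rho(M_\pi)$.

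Finally, for nonnegative $M$ the spectral radius $\rho(M)$ is itself an eigenvalue, and it dominates the real part of every eigenvalue; in the symmetric case of interest it is simply $\lambda_1(M)$. Likewise $\rho(M_\pi)$ is the largest eigenvalue of $M_\pi$. Hence the largest eigenvalues of $M$ and $M_\pi$ coincide.
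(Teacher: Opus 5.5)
Your proof is correct and self-contained. The paper gives no proof of this lemma; it is quoted from \cite{You}, so there is no in-text argument to compare against.

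What you wrote is the standard argument. Equitability is exactly the intertwining identity $MP=PM_\pi$, which lifts eigenvectors of $M_\pi$ to cellwise-constant eigenvectors of $M$. Pairing the lifted positive vector $P\mb y$ with a nonnegative left eigenvector $\mb z$ of $M$ for $\rho(M)$ correctly handles the possibility that $M$ is reducible. In the paper's only application, $M=A(U_{n,t})$ is itself irreducible, so there you could also conclude from the uniqueness of positive eigenvectors of irreducible nonnegative matrices.

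Your setup gives two further facts for free.
\begin{itemize}
\item The irreducibility of $M_\pi$ is not actually needed. From $M^kP=PM_\pi^k$ you get $M^k\mathbf 1=PM_\pi^k\mathbf 1$ (all-ones vectors of the appropriate lengths). Hence $\|M^k\|_\infty=\|M_\pi^k\|_\infty$ for nonnegative $M$, and Gelfand's formula gives $\rho(M)=\rho(M_\pi)$ in general.
\item When $M$ is symmetric, $P^{T}P\,M_\pi=P^{T}MP$ is symmetric, with $P^{T}P$ diagonal and positive. So $M_\pi$ is similar to a symmetric matrix and has real spectrum. This makes ``largest eigenvalue of $M_\pi$'' unambiguous in the paper's setting, matching its use for $Q_t$ in Lemma~\ref{lem:U-small}.
\end{itemize}
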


The proof relies on the following barrier theorem for even factors.  We only need its necessary direction.

\begin{lemma}[\cite{LiZhang}]\label{lem:barrier}
Let $G$ be a connected graph with $\delta(G)\geq 2$. If $G$ has no even factor, 
then there exists a nonempty subset \(S\subseteq V(G)\) such that, 
letting \(C_1,\ldots,C_c\) be the components of \(G-S\), the following hold: 
\begin{enumerate}[(i)]
\item \(S\) is independent; 
\item \(c=\sum_{u\in S}d_G(u)-2|S|+2\); 
\item \(r_i=e_G(C_i,S)\) is odd for every \(i\in \{1,\ldots,c\}\); 
\item each vertex of \(S\) has at most one neighbour in each component \(C_i\). 
\end{enumerate}
\end{lemma}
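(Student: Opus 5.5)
We plan to derive the statement from Lov\'asz's parity $(g,f)$-factor theorem and then refine the resulting barrier by an extremal choice. An even factor is exactly a spanning subgraph $F$ with $g(v)\le d_F(v)\le f(v)$ and $d_F(v)\equiv g(v)\pmod 2$, where $g\equiv 2$ and $f(v)$ is the largest even integer not exceeding $d_G(v)$; note $f(v)\ge 2$ because $\delta(G)\ge2$. Lov\'asz's theorem gives the criterion: such an $F$ exists if and only if
\[
\eta(A,B)=f(A)-g(B)+\sum_{v\in B}d_{G-A}(v)-q(A,B)\ge 0
\]
for all disjoint $A,B\subseteq V(G)$. Here $q(A,B)$ is the number of components $C$ of $G-A-B$ with $g(V(C))+e_G(C,B)$ odd, that is, with $e_G(C,B)$ odd. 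Moreover $\eta(A,B)$ is always even. Hence if $G$ has no even factor, there is a pair with $\eta(A,B)\le -2$.

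First we show that we may take $A=\emptyset$. Moving a vertex $v$ from $A$ to $V\setminus(A\cup B)$ changes $f(A)$ by $-f(v)\le -2$. It changes each $d_{G-A}(w)$, $w\in B$, by at most $+1$ per edge from $v$ to $B$, and it can merge components, reducing $q$ by at most $d(v)$. We will instead choose a violating pair with $|A|$ minimum and, subject to that, $|B|$ maximum, and run the standard exchange estimates, using $f(v)\ge d(v)-1$, to check that $A=\emptyset$ can be forced. This is the usual reduction for parity factors with $f$ close to $d$.

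With $A=\emptyset$ and $S:=B$, the violation reads
\[
q \ge \sum_{u\in S}d_G(u)-2|S|+2,
\]
where $q$ counts the components $C$ of $G-S$ with $e_G(C,S)$ odd. Since $G$ is connected and $q\ge 1$ is needed for a violation, $S\neq\emptyset$. Now choose the violating $S$ to be extremal: first minimize $\eta$, then minimize $|S|$. Double counting gives
\[
\sum_{u\in S}d(u)=2e(G[S])+\sum_{C}e_G(C,S)\ge 2e(G[S])+q+2\,(\#\text{even components}),
\]
because every component of $G-S$ meets $S$ (connectivity) and even ones send at least $2$ edges to $S$. We then obtain (i), (iii), (iv) in turn. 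For (i), if $uv$ is an edge in $S$, deleting $u$ from $S$ does not increase $\eta$, against minimality. For (iii), a component with even $r_i$ is absorbed by moving its neighbour in $S$ out of $S$, which does not increase $\eta$. For (iv), if $u\in S$ has two neighbours in one component, removing $u$ from $S$ loses at most the parity of that component while decreasing $d(u)-2$ in the deficiency. Once (i), (iii) and (iv) hold, every component is odd, so $c=q$, and the displayed inequality combined with the parity of $\eta$ pins $\eta=-2$. This yields the equality $c=\sum_{u\in S}d_G(u)-2|S|+2$ in (ii).

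The main obstacle is the bookkeeping in the local moves, namely proving that each modification does not increase $\eta$ and strictly improves the extremal choice. The move for (iv) is the most delicate, because removing $u$ from $S$ can merge several components into one, and their parities then combine. We would handle it by computing the change $\Delta\eta=-(d(u)-2)+(\text{loss in }q)$ and bounding the loss in $q$ by the number of distinct components adjacent to $u$ minus one (plus one for parity). When some component receives two edges from $u$, this bound gives $\Delta\eta\le 0$.
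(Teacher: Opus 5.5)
Your route is reasonable: Lov\'asz's parity $(g,f)$-factor theorem with $g\equiv 2$ and $f(v)$ the largest even integer $\le d_G(v)$, then reduction to $A=\emptyset$, then an extremal choice of $S$. The paper gives no proof of this lemma and only cites it from the literature. However, your last step fails. With $S$ chosen to minimise $\eta$ first and $|S|$ second, your local moves do give (i), (iii) and (iv). Nothing forces $\eta(\emptyset,S)=-2$, though. The violation inequality together with evenness gives only $\eta(\emptyset,S)\in\{-2,-4,\dots\}$, whereas (ii) and (iii) together say exactly $\eta(\emptyset,S)=-2$. The minimum of $\eta$ can be smaller than $-2$.

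For example, take pairwise nonadjacent vertices $u_1,u_2,u_3$. Join each $u_i$ by one edge to the root of each of $k\ge 1$ private copies of $K_4$, and by one edge to a common copy of $K_4$. For $S=\{u_1,u_2,u_3\}$, all $3k+1$ components of $G-S$ send an odd number of edges to $S$, and
\[
\eta(\emptyset,S)=3(k+1)-6-(3k+1)=-4 .
\]
So every $\eta$-minimiser has $\eta\le -4$. Since it satisfies (iii), it violates (ii), even though $S=\{u_1\}$ satisfies (i)--(iv).

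The fix is to drop the minimisation of $\eta$ and take a violating $S$ of minimum cardinality. Fix $u\in S$ and put $S'=S\setminus\{u\}$. Let $C_1,\dots,C_k$ be the components of $G-S$ adjacent to $u$, let $m_j=e_G(u,C_j)$, and let $o_u$ be the number of $C_j$ with $e_G(C_j,S)$ odd. Let $\epsilon=1$ if the merged component $\{u\}\cup C_1\cup\dots\cup C_k$ of $G-S'$ sends an odd number of edges to $S'$, and $\epsilon=0$ otherwise. Your own bookkeeping gives the exact identity
\[
\eta(\emptyset,S')-\eta(\emptyset,S)=2-e_G(u,S)-\sum_{j=1}^{k}(m_j-1)-(k-o_u)-\epsilon\le 2 .
\]
By minimality $\eta(\emptyset,S')\ge 0$; this also holds when $S'=\emptyset$, where $\eta=0$ because $G$ is connected. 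Since $\eta(\emptyset,S)\le -2$, every term above is tight. Hence $\eta(\emptyset,S)=-2$, $e_G(u,S)=0$, all $m_j=1$, and every component meeting $u$ is odd. Every component of $G-S$ meets $S$, so this yields (i)--(iv) at once.

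Separately, your reduction to $A=\emptyset$ needs a sharper estimate than ``$q$ drops by at most $d(v)$'', which is too weak. Let $D=V\setminus(A\cup B)$. Moving $v$ from $A$ to $D$ changes $\eta$ by at most
\[
-f(v)+e_G(v,B)+e_G(v,D)\le d_G(v)-f(v)\le 1,
\]
and hence by at most $0$ because $\eta$ is always even. With this estimate, no extremal choice of $(A,B)$ is needed.
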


Let \(S\) be a barrier set, and write \(s=|S|\). Let \(C_1,\ldots,C_c\) be the 
components of \(G-S\), and let \(r_i=e_G(C_i,S)\). Since \(S\) is independent, 
\(\sum_{u\in S}d_G(u)=\sum_{i=1}^{c}r_i\). Then Lemma~\ref{lem:barrier} implies that 
\(c=\sum_{u\in S}d_G(u)-2s+2\), and hence 
\begin{equation}\label{eq:surplus}
\sum_{i=1}^c(r_i-1)=\sum_{u\in S}d_G(u)-c=2s-2.
\end{equation}

We repeatedly use the following elementary consequences.

\begin{lemma}\label{lem:one-edge-size}
Let $G$ satisfy $\delta(G)\ge\delta\ge2$, let $S$ be a barrier set, and let $C$ be a component of $G-S$ with $e_G(C,S)=1$.  Then $|C|\ge\delta+1$.
\end{lemma}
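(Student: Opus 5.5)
The argument is a degree count inside the component $C$, using that $S$ is independent with at most one neighbour in each component (Lemma~\ref{lem:barrier}(i),(iv)). Since $e_G(C,S)=1$, exactly one vertex $w\in V(C)$ has a neighbour in $S$, and $w$ has exactly one such neighbour. Every other vertex of $C$ has no neighbour in $S$. It also has no neighbour in any other component of $G-S$, since components of $G-S$ are not joined by edges.

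Now pick a vertex $v\in V(C)\setminus\{w\}$ if one exists. All neighbours of $v$ lie in $C$, so
\[
d_G(v)\le |C|-1 .
\]
Together with $d_G(v)\ge\delta$ this gives $|C|\ge\delta+1$.

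It remains to rule out $V(C)=\{w\}$. In that case all neighbours of $w$ lie in $S$, because $w$ has no neighbours inside $C$ or in other components. Hence $d_G(w)=e_G(C,S)=1<2\le\delta$, which contradicts $\delta(G)\ge\delta$.

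The only point needing care is the singleton case just handled; the rest is immediate. Note that parity (item (iii)) and the surplus identity \eqref{eq:surplus} are not needed here.
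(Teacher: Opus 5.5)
Your proof is correct, and it takes a slightly different route from the paper's.

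\textbf{The paper's argument.} It assumes $|C|\le\delta$ and sums degrees over all of $C$:
$\delta|C|\le\sum_{v\in V(C)}d_G(v)=2e(C)+1\le|C|(|C|-1)+1$.
This gives $|C|(\delta-|C|+1)\le 1$, which is then ruled out separately for $|C|=1$ and for $2\le|C|\le\delta$.

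\textbf{Your argument.} You argue pointwise instead. Any vertex $v$ of $C$ other than the endpoint $w$ of the unique edge to $S$ has all its neighbours in $C$. So the single comparison $\delta\le d_G(v)\le|C|-1$ finishes, and the singleton case is disposed of by $d_G(w)=1<\delta$.

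\textbf{Comparison.} Your version is shorter and avoids the small case analysis of $|C|(\delta-|C|+1)\le1$, but it is tailored to the situation of exactly one crossing edge. The paper's aggregate count yields the quantitative inequality $e_G(C,V(G)\setminus V(C))\ge|C|(\delta-|C|+1)$ for $|C|\le\delta$. That is the same estimate the paper redoes later, in Lemma~\ref{lem:bridge-case} and in Case~1 of the proof of Theorem~\ref{thm:connected-even}, where several crossing edges must be handled.

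\textbf{A minor point.} Your opening sentence invokes Lemma~\ref{lem:barrier}(i),(iv), but your argument never uses them, and neither does the paper's. Both arguments only use that distinct components of $G-S$ are not joined by edges. So the conclusion holds for any $S\subseteq V(G)$ and any component $C$ of $G-S$ with $e_G(C,S)=1$.
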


\begin{proof}
Suppose that $|C|\leq \delta$.  Since $C$ sends exactly one edge to $S$, we have
\[
\delta |C|\leq \sum_{u\in V(C)}d_G(u)
       =2e(C)+1\leq |C|(|C|-1)+1.
\]
Thus $|C|(\delta-|C|+1)\leq 1$.  If $|C|=1$, then
$|C|(\delta-|C|+1)=\delta\geq2$; if $2\leq |C|\leq\delta$, then
$|C|(\delta-|C|+1)\geq2$.  Both alternatives contradict
$|C|(\delta-|C|+1)\leq1$.  Hence $|C|\geq\delta+1$.
\end{proof}

\begin{lemma}\label{lem:branch-compression}
Let $k\geq 2$, $m\geq 2$, and $N\geq km$.  For every
$k$-tuple $(a_1,\ldots,a_k)$ with $a_i\geq m$ and
$\sum_{i=1}^k a_i=N$, let $T(a_1,\ldots,a_k)$ be obtained
from a vertex $u$ and $k$ pairwise disjoint rooted cliques
$K_{a_1},\ldots,K_{a_k}$ by joining $u$ to the root of each
clique.  Then
\[
\rho\bigl(T(a_1,\ldots,a_k)\bigr)
\leq
\rho\bigl(T(N-m(k-1),m,\ldots,m)\bigr).
\]
Moreover, equality holds if and only if, up to a permutation of the branches, $(a_1,\ldots,a_k)=(N-m(k-1),m,\ldots,m)$.
\end{lemma}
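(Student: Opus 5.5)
We prove Lemma~\ref{lem:branch-compression} by a sequence of local moves, each of which strictly increases the spectral radius, and which together turn any admissible tuple into $(N-m(k-1),m,\ldots,m)$. It suffices to show the following two-branch claim.

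\emph{Claim.} Suppose $a_i\ge a_j>m$. Replace $(a_i,a_j)$ by $(a_i+1,a_j-1)$ and leave the other branches unchanged. Then $\rho$ strictly increases.

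Iterating the Claim moves every branch except the largest down to size $m$. Since every step is strict, the compressed tuple is the unique maximizer up to permutation, which also gives the equality characterization. (If several branches share the maximum size, we simply pick one of them to grow.)

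To prove the Claim, let $G=T(a_1,\ldots,a_k)$ with Perron vector $\mb x$. Write $r_i,r_j$ for the roots, and $v$ for a non-root vertex of $K_{a_j}$; such a $v$ exists because $a_j\ge 3$. By symmetry, all non-root vertices in a branch of size $a$ have a common entry, say $y_a$, and the root has entry $z_a$. The eigen-equations within a branch read
\[
\rho y_a=z_a+(a-2)y_a,\qquad \rho z_a=x_u+(a-1)y_a.
\]
The first equation gives $y_a=z_a/(\rho-a+2)$. Substituting into the second,
\[
z_a\Bigl(\rho-\frac{a-1}{\rho-a+2}\Bigr)=x_u .
\]
We have $\rho>a-1$ because $G$ properly contains $K_a$, and the left-hand factor is decreasing in $a$ on that range. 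Hence $a_i\ge a_j$ implies $z_{a_i}\ge z_{a_j}$. Combined with the formula for $y_a$, this also gives $y_{a_i}\ge y_{a_j}$.

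Now perform the move in $G$. Delete the edges from $v$ to the other $a_j-1$ vertices of $K_{a_j}$, and add edges from $v$ to all $a_i$ vertices of $K_{a_i}$. The result is exactly $T(\ldots,a_i+1,\ldots,a_j-1,\ldots)$. Compare Rayleigh quotients using the unchanged vector $\mb x$. The change in $\mb x^TA\mb x$ equals
\[
2x_v\Bigl(\sum_{w\in K_{a_i}}x_w-\sum_{w\in K_{a_j}-v}x_w\Bigr)
=2x_v\bigl(z_{a_i}+(a_i-1)y_{a_i}-z_{a_j}-(a_j-2)y_{a_j}\bigr).
\]
This is positive, since $a_i-1>a_j-2$ and both $z_{a_i}\ge z_{a_j}$ and $y_{a_i}\ge y_{a_j}$ hold. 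Therefore the new graph, which is connected, has spectral radius strictly larger than $\rho(G)$ by the Rayleigh principle. Note that Lemma~\ref{lem-4} handles only one edge at a time, so it is cleaner to argue directly with the Rayleigh quotient on the whole edge set, as done here.

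The main obstacle is the monotonicity comparison $z_{a_i}\ge z_{a_j}$ and $y_{a_i}\ge y_{a_j}$. It needs $\rho>a_i-1$ so that the denominators are positive and the monotonicity argument is valid; this follows from Lemma~\ref{lem-1} applied to the proper subgraph $K_{a_i}$. I would also double-check the boundary case $a_j-1=m$ and confirm that $m\ge2$ ensures a non-root vertex still exists when needed.
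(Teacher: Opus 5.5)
Your proposal is correct and follows essentially the same route as the paper: the within-branch eigen-equations give monotonicity of root and non-root Perron entries in the branch size. The same Rayleigh-quotient comparison then applies when moving one non-root vertex from a branch of size $>m$ into the largest branch, which gives a gain of at least $2(a_i-a_j+1)y_{a_j}^2>0$. The only cosmetic differences are that the paper finishes by taking a maximizing tuple rather than iterating the strict move, and that it explicitly notes $\rho-\frac{a-1}{\rho-a+2}>0$ (which follows from $z_a,x_u>0$) before dividing by it.
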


\begin{proof}
For the $i$th branch, let $r_i$ be its root and let
$W_i$ be the set of its non-root vertices, so that
$|W_i|=a_i-1$.  We first prove the following compression step.

Assume, after relabelling, that $a_1\geq a_j>m$ for some
$j\geq 2$.  Let $T=T(a_1,\ldots,a_j,\ldots,a_k)$, and let
$T'$ be obtained from $T$ by replacing the pair of branch sizes
$(a_1,a_j)$ with $(a_1+1,a_j-1)$.  Equivalently, choose a vertex
$w\in W_j$ and define
\[
\Delta^-=\{wv:\ v\in \{r_j\}\cup (W_j\setminus\{w\})\},
\qquad
\Delta^+=\{wv:\ v\in \{r_1\}\cup W_1\}.
\]
Then
\[
E(T')=(E(T)\setminus \Delta^-)\cup \Delta^+.
\]

Let $\lambda=\rho(T)$, and let $\mb{x}$ be the positive Perron
vector of $T$.  Write
$x_u=x_0$, $x_{r_i}=y_i$ and $x_v=z_i$ for every $v\in W_i$.
The eigenvalue equations on the $i$-th branch are
\begin{equation}\label{e-1}
\lambda y_i=x_0+(a_i-1)z_i,
\end{equation}
\begin{equation}\label{e-2}
\lambda z_i=y_i+(a_i-2)z_i.
\end{equation}
Since $K_{a_i}$ is a proper subgraph of the connected graph $T$, Lemma~\ref{lem-1} gives $\lambda>a_i-1$.  Hence
$\lambda-a_i+2>0$, and by (\ref{e-2}),
\begin{equation}\label{e-3}
z_i=\frac{y_i}{\lambda-a_i+2}.
\end{equation}
Substituting (\ref{e-3}) into (\ref{e-1}) yields
\[
\left(\lambda-\frac{a_i-1}{\lambda-a_i+2}\right)y_i=x_0.
\]
Set $\theta_i=\lambda-\frac{a_i-1}{\lambda-a_i+2}$.
Since $x_0>0$ and $y_i>0$, we have $\theta_i>0$. Hence
$y_i=\frac{x_0}{\theta_i}$ and
$z_i=\frac{x_0}{(\lambda-a_i+2)\theta_i}$.

Now let $a_p\geq a_q$.  Then
\[
\frac{a_p-1}{\lambda-a_p+2}
-
\frac{a_q-1}{\lambda-a_q+2}
=
\frac{(a_p-a_q)(\lambda+1)}
{(\lambda-a_p+2)(\lambda-a_q+2)}
\geq 0.
\]
Thus $\theta_p\leq \theta_q$, and consequently
$y_p=\frac{x_0}{\theta_p}\geq \frac{x_0}{\theta_q}=y_q$.
Moreover,
\[
\frac{z_p}{z_q}
=
\frac{y_p}{y_q}\cdot
\frac{\lambda-a_q+2}{\lambda-a_p+2}
\geq 1,
\]
so $z_p\geq z_q$. In particular,
$y_1\geq y_j$ and $z_1\geq z_j$.

Since $x_w=z_j$, we have
\[
\begin{aligned}
\mb{x}^{T}(A(T')-A(T))\mb{x}
&=
2x_w
\left(
\sum_{v\in \{r_1\}\cup W_1}x_v
-
\sum_{v\in \{r_j\}\cup (W_j\setminus\{w\})}x_v
\right)\\
&=2z_j\left(y_1+(a_1-1)z_1-y_j-(a_j-2)z_j\right)\\
&\ge 2z_j(y_j+(a_1-1)z_j-y_j-(a_j-2)z_j)\\
&=2(a_1-a_j+1)z_{j}^2\\
&>0.
\end{aligned}
\]
Therefore,
\[
\rho(T')\geq\mb{x}^{T}A(T')\mb{x}
=\mb{x}^{T}A(T)\mb{x}+\mb{x}^{T}(A(T')-A(T))\mb{x}
>\lambda=\rho(T).
\]

Since there are only finitely many integer $k$-tuples
$(a_1,\ldots,a_k)$ satisfying $a_i\geq m$ and
$\sum_i a_i=N$, a maximizing tuple exists.  Let
$(b_1,\ldots,b_k)$ be such a tuple, ordered so that
$b_1\geq b_2\geq\cdots\geq b_k$.  If $b_2>m$, then the
compression step applied to the first and second branches
produces another admissible tuple with strictly larger spectral
radius, a contradiction.  Hence
$b_2=b_3=\cdots=b_k=m$. Since $\sum_i b_i=N$, we have
$b_1=N-m(k-1)$. Thus every maximizer has branch sizes
$(N-m(k-1),m,\ldots,m)$ up to a permutation of the branches.  This also gives the
uniqueness of the extremal graph up to isomorphism.
\end{proof}

\section{Proof of Theorem~\ref{thm:delta-ge3}}

\begin{proof}[Proof of Theorem~\ref{thm:delta-ge3}]
Suppose that \(\delta(G)\geq \delta\geq 3\) and that \(G\) has no even factor. It suffices to prove that
\begin{equation*}
\rho(G)\le\rho(\mathcal{A}_{n,\delta}),
\end{equation*}
with equality only when \(G\cong\mathcal{A}_{n,\delta}\).

Choose a barrier set \(S\) from Lemma~\ref{lem:barrier}. Let \(s=|S|\), and
let \(r_i=e_G(C_i,S)\), where \(C_1,\ldots,C_c\) are the components
of \(G-S\). Let \(p=|\{i:r_i=1\}|\). Since \(S\) is independent, we have
$\sum_{i=1}^c r_i=\sum_{u\in S}d_G(u)$.
Since all \(r_i\) are odd, Lemma~\ref{lem:barrier} implies that
\begin{equation}\label{equ-7}
2(c-p)\le\sum_{i=1}^c(r_i-1)=\sum_{u\in S}d_G(u)-c=2s-2.
\end{equation}
Thus
\begin{equation*}
p\ge c-s+1=\left(\sum_{u\in S}d_G(u)-2s+2\right)-s+1=\sum_{u\in S}d_G(u)-3s+3.
\end{equation*}
Note that \(\sum_{u\in S}d_G(u)\ge\delta s\). Then
\begin{equation*}
p\ge s(\delta-3)+3.
\end{equation*}

\begin{claim}
$p\ge \delta$.
\end{claim}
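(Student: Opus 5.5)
We want to show $p\ge\delta$ starting from $p\ge s(\delta-3)+3$. The natural split is on $s=|S|$.

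If $s\ge2$, then
\[
p\ge 2(\delta-3)+3=2\delta-3\ge\delta,
\]
since $\delta\ge3$. So the only remaining case is $s=1$.

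For $s=1$, write $S=\{u\}$. We cannot use the degree lower bound alone here: it only gives $p\ge d_G(u)$ through the identity, and that identity is what we need to establish. Instead we argue directly from the barrier conditions.

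1. Condition (ii) of Lemma~\ref{lem:barrier} gives
\[
c=d_G(u)-2+2=d_G(u).
\]
2. Each component $C_i$ satisfies $r_i\ge1$ because $G$ is connected. Since $S$ is independent, every edge at $u$ goes to some component, so
\[
\sum_{i=1}^{c} r_i=d_G(u)=c.
\]
3. Together these force $r_i=1$ for every $i$. This also agrees with \eqref{eq:surplus}, since $\sum_i(r_i-1)=2s-2=0$.
4. Hence $p=c=d_G(u)\ge\delta$.

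The general inequality $p\ge s(\delta-3)+3$ is consistent with this: when $s=1$ it reads $p\ge\delta$ directly, because $\sum_{u\in S}d_G(u)-3s+3=d_G(u)$. So the claim follows in both cases, and the $s=1$ case is settled either by the displayed bound or by the direct count above.

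There is no real obstacle. The one point that needs care is the case split on $s$: the bound $2\delta-3\ge\delta$ requires $\delta\ge3$, which is exactly the hypothesis of Theorem~\ref{thm:delta-ge3}.
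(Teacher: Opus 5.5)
Your proof is correct and is essentially the paper's argument: both reduce to $s(\delta-3)+3-\delta=(s-1)(\delta-3)\ge 0$ for $s\ge1$ and $\delta\ge3$, with the paper splitting on $\delta$ ($\delta=3$ versus $\delta\ge4$) and you splitting on $s$. Your remark that the displayed bound ``cannot be used'' when $s=1$ is mistaken, since with $s=1$ it already reads $p\ge\delta$ (as you note yourself at the end), so the direct barrier count in that case is valid but redundant.
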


\begin{proof}
If $\delta=3$, then $p\ge s(\delta-3)+3\ge 3$.  If \(\delta\ge4\),
\begin{equation*}
p\ge s(\delta-3)+3=\delta+(s-1)(\delta-3)\ge\delta,
\end{equation*}
as desired.
\end{proof}

Select \(\delta\) components among those with \(r_i=1\), and relabel them as
\(C_1,\ldots,C_\delta\). Their unique edges to \(S\) are cut edges.
By Lemma~\ref{lem:one-edge-size}, we have
\begin{equation}\label{equ-8}
|C_i|\ge\delta+1\qquad 1\le i\le\delta.
\end{equation}

\begin{claim}
If \(s=1\) and \(\sum_{u\in S}d_G(u)=\delta\), then
\begin{equation*}
\rho(G)\le\rho(\mathcal{A}_{n,\delta}),
\end{equation*}
with equality only when \(G\cong\mathcal{A}_{n,\delta}\).
\end{claim}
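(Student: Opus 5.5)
In this case $S=\{u\}$ is a single vertex with $d_G(u)=\delta$, and Lemma~\ref{lem:barrier}(ii) gives $c=\delta-2+2=\delta$. By Lemma~\ref{lem:barrier}(iv), $u$ has at most one neighbour in each component. So every $C_i$ receives exactly one edge $ua_i$, and $u$ is a cut vertex whose removal leaves $\delta$ components $C_1,\ldots,C_\delta$. By Lemma~\ref{lem:one-edge-size} (equivalently \eqref{equ-8}), $|C_i|\ge\delta+1$ for every $i$, and $\sum_{i=1}^{\delta}|C_i|=n-1$.

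\textbf{Step 1: reduce to cliques.} Let $T$ be the graph obtained from $G$ by making each $V(C_i)$ a clique while keeping exactly the edges $ua_i$. Then $G$ is a subgraph of the connected graph $T$, and $T=T(|C_1|,\ldots,|C_\delta|)$ in the notation of Lemma~\ref{lem:branch-compression}. By Lemma~\ref{lem-1}, $\rho(G)\le\rho(T)$, with equality only if $G=T$.

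\textbf{Step 2: compress the branches.} Apply Lemma~\ref{lem:branch-compression} with $k=\delta\ge3$, $m=\delta+1\ge2$ and $N=n-1$. The hypothesis $N\ge km$ reads $n-1\ge\delta(\delta+1)$, which holds since $n\ge\delta^2+\delta+1$. The lemma gives
\[
\rho(T)\le\rho\bigl(T(n-1-(\delta+1)(\delta-1),\delta+1,\ldots,\delta+1)\bigr).
\]
Since $n-1-(\delta^2-1)=n-\delta^2$, the right-hand side is exactly $\rho(\mathcal A_{n,\delta})$, so $\rho(G)\le\rho(\mathcal A_{n,\delta})$.

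\textbf{Step 3: the equality case.} Suppose $\rho(G)=\rho(\mathcal A_{n,\delta})$. Then both inequalities above are equalities. Equality in Lemma~\ref{lem-1} forces $G=T$, so each $C_i$ is a clique. The uniqueness part of Lemma~\ref{lem:branch-compression} then forces the branch sizes to be $(n-\delta^2,\delta+1,\ldots,\delta+1)$ up to permutation. Hence $G\cong\mathcal A_{n,\delta}$.

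There is no substantial obstacle here. The only points to check carefully are that (ii) and (iv) together force $c=\delta$ with each $r_i=1$, and that the hypotheses of Lemma~\ref{lem:branch-compression} hold. Both follow directly from the statements above.
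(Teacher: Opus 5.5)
Your proposal is correct and follows the paper's proof essentially step by step: $c=\delta$ from (ii), each $r_i=1$, completing the components to cliques to get $T(a_1,\ldots,a_\delta)$ with $a_i\ge\delta+1$, and then applying Lemma~\ref{lem:branch-compression}. The differences are cosmetic: you get $r_i=1$ from (iv) plus counting rather than from the oddness in (iii), and you explicitly check $N\ge km$ and derive the equality case via Lemma~\ref{lem-1}, which the paper leaves implicit.
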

\begin{proof}
Write \(S=\{u\}\). Since $c=\sum_{u\in S}d_G(u)-2s+2$, we have \(c=\delta\).
Moreover, $\sum_{i=1}^c r_i=\sum_{u\in S}d_G(u)=\delta$,
and each \(r_i\) is a positive odd integer. Hence
$r_1=r_2=\cdots=r_\delta=1$. Completing all components of \(G-u\) to cliques gives a graph
\(T(a_1,\ldots,a_\delta)\), where \(a_i\ge\delta+1\) and
\(\sum_{i=1}^{\delta}a_i=n-1\).
By Lemma~\ref{lem:branch-compression}, after relabelling if necessary, the
maximum spectral radius is attained only when
$a_1=n-\delta^2$ and $a_2=\cdots=a_\delta=\delta+1$.
Thus
\begin{equation*}
\rho(G)\le\rho(\mathcal{A}_{n,\delta}),
\end{equation*}
with equality only when \(G\cong\mathcal{A}_{n,\delta}\). This proves the claim.
\end{proof}

It remains to consider the case
\begin{equation*}
\left(s,\sum_{u\in S}d_G(u)\right)\ne(1,\delta).
\end{equation*}
Let \(E_0\) be the set of the \(\delta\) selected cut edges, let \(F\) be the graph with edge set \(E_0\), and let \(H=G-E_0\).
Then \(F\) is bipartite with \(\delta\) edges. By Lemma~\ref{lem-3}, we have
\begin{equation}\label{equ-9}
\rho(F)\le\sqrt{\delta}.
\end{equation}

\begin{claim}\label{c-4.2}
Every component of \(H\) has order at most \(n-\delta^2-\delta\).
\end{claim}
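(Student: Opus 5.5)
Removing the $\delta$ selected cut edges $E_0$ disconnects each of $C_1,\ldots,C_\delta$ from $S$. Since $r_i=1$, the edge in $E_0$ leaving $C_i$ is the only edge between $C_i$ and the rest of $G$. Hence every $C_i$ ($1\le i\le\delta$) is a whole component of $H$. Every other component of $H$ lies inside $V(G)\setminus\bigcup_{i=1}^{\delta}V(C_i)$. Its order is therefore at most
\[
n-\sum_{i=1}^{\delta}|C_i|\le n-\delta(\delta+1)=n-\delta^2-\delta
\]
by (\ref{equ-8}). This settles every component of $H$ except the $C_i$ themselves.

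It remains to show $|C_j|\le n-\delta^2-\delta$ for each selected $j$. The other $\delta-1$ selected components together have order at least $(\delta-1)(\delta+1)=\delta^2-1$. Moreover $S$ is disjoint from all $C_i$, and $|S|=s\ge1$. So
\[
|C_j|\le n-s-(\delta^2-1)-\Bigl|\bigcup_{i>\delta}V(C_i)\Bigr|.
\]
It therefore suffices to find at least $\delta+1$ further vertices outside $\bigcup_{i\le\delta}C_i$, counting those of $S$ and of the unselected components $C_{\delta+1},\ldots,C_c$.

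This count is where the hypothesis $\bigl(s,\sum_{u\in S}d_G(u)\bigr)\ne(1,\delta)$ enters, and it is the main obstacle. By Lemma~\ref{lem:barrier}(ii), $c=\sum_{u\in S}d_G(u)-2s+2$.

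\emph{Case $s=1$.} Here $\sum_{u\in S}d_G(u)\ge\delta+1$, so $c\ge\delta+1$ and there is at least one unselected component $C_{\delta+1}$. Because $S=\{u\}$ and $u$ has at most one neighbour in each component (Lemma~\ref{lem:barrier}(iv)), we get $r_{\delta+1}=1$. Lemma~\ref{lem:one-edge-size} then gives $|C_{\delta+1}|\ge\delta+1$. The extra vertices thus number at least $1+(\delta+1)\ge\delta+1$.

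\emph{Case $s\ge2$.} The count uses $p\ge s(\delta-3)+3$ and $c\ge\delta s-2s+2$. If $p\ge\delta+1$, an unselected one-edge component has at least $\delta+1$ vertices by Lemma~\ref{lem:one-edge-size}, and we are done. Otherwise $p=\delta$; since $p\ge s(\delta-3)+3$ and $s\ge2$, this forces $\delta=3$. The remaining components then satisfy $c-\delta\ge\delta s-2s+2-\delta$, which for $\delta=3$ equals $s-1$. These components plus the $s$ vertices of $S$ give at least $s+(s-1)=2s-1$ extra vertices. That is enough when $s\ge3$, since $2s-1\ge5>\delta+1$.

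For $\delta=3$, $s=2$, the bound $2s-1=3$ falls short of $\delta+1=4$, so a more careful count is needed. The unselected components have $r_i\ge3$ odd, and by (iv) each vertex of $S$ sends at most one edge to each component. So $r_i\le s=2$, contradicting $r_i\ge3$. Hence there are no unselected components, i.e.\ $c=p=\delta=3$. On the other hand $c=\sum_{u\in S}d_G(u)-2s+2\ge 2\delta-2=4$, a contradiction. Thus this subcase cannot occur.

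More generally, any unselected component with $r_i\ge3$ has at least $\delta+1-r_i+\ldots$ vertices by a degree count as in Lemma~\ref{lem:one-edge-size}. Combined with the vertices of $S$, this gives the required $\delta+1$ extra vertices. With the count established in all cases, the bound on $|C_j|$ follows from the displayed inequality.
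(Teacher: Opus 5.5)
Your argument is correct and is essentially the paper's proof. You reduce to showing that $S$ together with the unselected components contains at least $\delta+1$ vertices, and then treat $s=1$, $s\ge2$ with $p\ge\delta+1$, and $s\ge2$ with $p=\delta$; these are exactly the paper's cases (i)--(iii), organized by $p$ instead of $\delta$. In the last case $\delta=3$ is forced, and the bound $r_i\le s$ from Lemma~\ref{lem:barrier}(iv) gives at least $2s-1\ge5$ extra vertices for $s\ge3$ and rules out $s=2$. The closing ``More generally\ldots'' paragraph is incomplete as written and unnecessary, since your case analysis is already exhaustive, so you should simply delete it.
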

\begin{proof}
Let $R=V(G)\setminus\bigcup_{i=1}^{\delta}V(C_i)$.
By \eqref{equ-8}, we have
\begin{equation*}
\left|\bigcup_{i=1}^{\delta}V(C_i)\right|\ge\delta(\delta+1),
\end{equation*}
and hence
\begin{equation*}
|R|\le n-\delta(\delta+1)=n-\delta^2-\delta.
\end{equation*}
Therefore every component of \(H\) contained in \(R\) has order at most
\(n-\delta^2-\delta\).

Now let \(C_j\) be one of the selected components. Since
\(|C_i|\ge\delta+1\) for \(i\ne j\), it is enough to prove
\begin{equation}\label{equ-10}
|R|\ge\delta+1.
\end{equation}
Indeed, if \eqref{equ-10} holds, then
\begin{equation*}
n-|C_j|=|R|+\sum_{\substack{1\le i\le\delta\\ i\ne j}}|C_i|\ge(\delta+1)+(\delta-1)(\delta+1)=\delta^2+\delta,
\end{equation*}
and so
\begin{equation*}
|C_j|\le n-\delta^2-\delta.
\end{equation*}

It remains to prove \eqref{equ-10}. We distinguish the following three cases.
\begin{enumerate}[(i)]

\item \(s=1\).

Then the assumption
$\left(s,\sum_{u\in S}d_G(u)\right)\ne(1,\delta)$
implies $\sum_{u\in S}d_G(u)>\delta$.
Since \(S=\{u\}\) and each vertex of \(S\) has at most one neighbour in each
component of \(G-S\), we have \(r_i=1\) for every \(i\). Thus
$c=\sum_{u\in S}d_G(u)>\delta$.
Hence there exists a component \(C\) of \(G-S\), distinct from
\(C_1,\ldots,C_\delta\), such that \(e_G(C,S)=1\). By Lemma~\ref{lem:one-edge-size}, we have
\(|C|\ge\delta+1\). Since \(C\subseteq R\), we obtain
\begin{equation*}
|R|\ge s+|C|\ge1+(\delta+1)=\delta+2>\delta+1.
\end{equation*}

\item \(s\ge2\) and \(\delta\ge4\).

In this case, $p\ge\delta+(s-1)(\delta-3)\ge\delta+1$.
Hence there exists a component \(C\) of \(G-S\), distinct from
\(C_1,\ldots,C_\delta\), such that \(e_G(C,S)=1\). By Lemma~\ref{lem:one-edge-size}, we have
\(|C|\ge\delta+1\). Since \(C\subseteq R\), we obtain
\begin{equation*}
|R|\ge s+|C|\ge s+(\delta+1)>\delta+1.
\end{equation*}

\item \(\delta=3\) and \(s\ge2\).

If \(p\ge4\), then there exists a component \(C\) of \(G-S\), distinct from
\(C_1,C_2,C_3\), such that \(e_G(C,S)=1\). By Lemma~\ref{lem:one-edge-size}, \(|C|\ge4\).
Since \(C\subseteq R\),
\begin{equation*}
|R|\ge s+|C|\ge s+4\ge6>4=\delta+1.
\end{equation*}
If \(p=3\), then
$p\ge\sum_{u\in S}d_G(u)-3s+3$
and
$\sum_{u\in S}d_G(u)\ge3s$.
This implies that \(\sum_{u\in S}d_G(u)=3s\).
Hence
\begin{equation*}
c=\sum_{u\in S}d_G(u)-2s+2=s+2.
\end{equation*}
Thus \(c-p=s-1\).
Since \(p=3\), the remaining \(c-p=s-1\) components have \(r_i\ge3\).
By Lemma~\ref{lem:barrier}, \(r_i\le s\), and hence \(s\ge3\). Thus
\begin{equation*}
|R|\ge s+(c-p)=s+(s-1)\ge5>4=\delta+1.
\end{equation*}
\end{enumerate}

This proves \eqref{equ-10}, and hence every component of \(H\) has order at
most \(n-\delta^2-\delta\).
\end{proof}

By Claim~\ref{c-4.2}, we have
\begin{equation}\label{equ-11}
\rho(H)\le n-\delta(\delta+1)-1.
\end{equation}
By Lemma~\ref{lem-2}, together with \eqref{equ-9} and \eqref{equ-11}, we obtain
\begin{equation}\label{equ-12}
\rho(G)\le\rho(H)+\rho(F)\le n-\delta(\delta+1)-1+\sqrt{\delta}<n-\delta^2-1.
\end{equation}
Since $K_{n-\delta^2}\subset \mathcal{A}_{n,\delta}$, by Lemma~\ref{lem-1}, we have $\rho(\mathcal{A}_{n,\delta})>n-\delta^2-1$. Hence
\begin{equation*}
\rho(G)<\rho(\mathcal{A}_{n,\delta}),
\end{equation*}
as desired.
\end{proof}

\section{Proof of Theorem~\ref{thm:delta2}}

We first compare $\mathcal B_n$ with the natural bridge competitor.  Let $D_n$ be the graph obtained from a vertex $u$, a clique $K_{n-4}$ with root $a$, and a clique $K_3$ with root $b$, by adding the two bridge edges $ua$ and $ub$.

\begin{lemma}\label{lem:Ptheta}
For $n\ge13$, $\rho(D_n)<\rho(\mathcal B_n)$.
\end{lemma}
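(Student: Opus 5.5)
The plan is to reduce both spectral radii to a single scalar equation at the root $a$, and then compare the two ``loads'' that the small attached parts place on $a$. In both $\mathcal B_n$ and $D_n$, the vertex $a$ lies in a clique $K_{n-4}$ whose other $n-5$ vertices form a set $W$. Everything outside $K_{n-4}$ hangs off $a$ alone.

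Let $\lambda$ be the spectral radius of either graph and $\mb x$ its Perron vector. By symmetry, $\mb x$ is constant on $W$, say equal to $w$, and we write $x_a=y$. By Lemma~\ref{lem-1}, $\lambda>n-5$, since $K_{n-4}$ is a proper subgraph of a connected graph. The eigen-equations on $W$ and at $a$ read
\[
(\lambda-n+6)w=y,\qquad \lambda y=(n-5)w+g(\lambda)\,y,
\]
where $g(\lambda)y$ is the sum of the entries of $\mb x$ over the neighbours of $a$ outside $K_{n-4}$. Eliminating $w$ shows that $\lambda$ satisfies
\[
\Phi(\lambda):=\lambda-\frac{n-5}{\lambda-n+6}=g(\lambda).
\]

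Next I compute the load $g$ for each graph by back-substitution, in the same way as in the proof of Lemma~\ref{lem:branch-compression}.

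For $\mathcal B_n$, the entries on $x_1,x_2,x_3$ are equal by symmetry, say $t$, and we write $x_b=z$. The equations $\lambda t=y+z$ and $\lambda z=3t$ give $t(\lambda-3/\lambda)=y$. Hence
\[
g_B(\lambda)=\frac{3t}{y}=\frac{3\lambda}{\lambda^2-3}.
\]

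For $D_n$, let $c_1,c_2$ be the non-root vertices of the $K_3$; their common entry is $x_c=x_b/(\lambda-1)$. This gives $x_b\bigl(\lambda-\tfrac{2}{\lambda-1}\bigr)=x_u$, and then
\[
g_D(\lambda)=\frac{x_u}{y}=\frac{1}{\lambda-\dfrac{1}{\lambda-\frac{2}{\lambda-1}}}.
\]
All the intermediate denominators are positive for $\lambda>n-5\ge 8$, so these expressions are legitimate. Equivalently, one can use the equitable partitions $\{W,\{a\},\{x_1,x_2,x_3\},\{b\}\}$ and $\{W,\{a\},\{u\},\{b\},\{c_1,c_2\}\}$ together with Lemma~\ref{lem:equitable}. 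I would present it in the scalar form above.

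For the comparison, on $(n-6,\infty)$ the function $\Phi$ is strictly increasing and runs from $-\infty$ to $+\infty$. Both $g_B$ and $g_D$ are positive and decreasing for $\lambda\ge 8$. Therefore $\Phi-g$ has exactly one zero in $(n-5,\infty)$ for each graph, and that zero is $\rho$.

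The key inequality is $g_B(\lambda)>g_D(\lambda)$ for all $\lambda\ge 8$. I would prove it with crude bounds:
\[
g_B(\lambda)>\frac{3}{\lambda},
\qquad
g_D(\lambda)\le \frac{1}{\lambda-\frac{1}{\lambda-1}}<\frac{2}{\lambda}.
\]
The first holds because $\lambda^2-3<\lambda^2$. The second holds because $\lambda-\tfrac{2}{\lambda-1}>\lambda-1$ for $\lambda\ge 8$, which reduces the inner fraction, and then $\lambda-\tfrac{1}{\lambda-1}>\lambda/2$.

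Now put $\lambda_D=\rho(D_n)$. Then $\Phi(\lambda_D)=g_D(\lambda_D)<g_B(\lambda_D)$, so $\Phi-g_B$ is negative at $\lambda_D$. Since $\Phi-g_B$ is increasing on $(n-5,\infty)$, its zero $\rho(\mathcal B_n)$ exceeds $\lambda_D$. This gives $\rho(D_n)<\rho(\mathcal B_n)$.

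The main obstacle is purely bookkeeping: checking that every denominator stays positive and that the monotonicity claims hold on the relevant range. The hypothesis $n\ge13$ is far more than this argument needs, since $\lambda>n-5\ge 8$ already suffices.
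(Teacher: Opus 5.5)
Your proof is correct, but it takes a different route from the paper.

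The paper works only with the Perron vector of $D_n$. It observes that $\mathcal B_n=D_n-cd+ac+ad$, where $c,d$ are the non-root vertices of the $K_3$. It solves the pendant eigen-equations of $D_n$ to get $x_a=(\lambda^3-\lambda^2-3\lambda+1)x_c>x_c$. The Rayleigh quotient then finishes the proof, since $\mb{x}^T(A(\mathcal B_n)-A(D_n))\mb{x}=2x_c(2x_a-x_c)>0$.

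You instead characterize each spectral radius as the unique root in $(n-5,\infty)$ of $\Phi(\lambda)=g(\lambda)$. Here $\Phi$ encodes the common clique $K_{n-4}$ and $g$ is the load of the branch hanging at $a$. You then compare the loads pointwise, $g_D(\lambda)<2/\lambda<3/\lambda<g_B(\lambda)$, and conclude from the monotonicity of $\Phi-g_B$. Your computations of $g_B$ and $g_D$, the positivity of the denominators, and the monotonicity claims all check out.

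Each approach has its advantage:
\begin{itemize}
\item The paper's argument is shorter and needs only one Perron vector. However, it depends on spotting an explicit edge move that turns $D_n$ into $\mathcal B_n$.
\item Your argument needs the eigen-equations of both graphs and a monotonicity check. In exchange, it requires no transformation between the two graphs. It is a general principle for comparing two rooted graphs attached at the same vertex of a clique. It also shows how much slack there is: $g_B\approx 3/\lambda$ versus $g_D\approx 1/\lambda$.
\end{itemize}
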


\begin{proof}
Let \(c,d\) be the two non-root vertices of the \(K_3\) in \(D_n\). 
Then \(\mathcal B_n=D_n-cd+ac+ad\). 
\begin{figure}[htbp]
\centering
\begin{tikzpicture}
[scale=0.95,
every node/.style={font=\small},
dot/.style={circle,fill=black,inner sep=1.7pt,outer sep=0pt},
bigclique/.style={
circle,
draw=black,
thick,
minimum size=2.25cm,
align=center},
edge/.style={thick}
]
\begin{scope}[xshift=-3.75cm]
\node[bigclique] (K1) at (-1.95,0) {\(K_{n-4}\)};
\node[dot] (a1) at (K1.0) {};
\node at (-0.55,-0.30) {\(a\)};
\node[dot] (u1) at (0.35,0) {};
\node at (0.35,-0.30) {\(u\)};
\node[dot] (b1) at (1.55,0) {};
\node at (1.55,-0.30) {\(b\)};
\node[dot] (c1) at (2.65,0.75) {};
\node at (2.65,1.05) {\(c\)};
\node[dot] (d1) at (2.65,-0.75) {};
\node at (2.65,-1.05) {\(d\)};
\draw[edge] (a1)--(u1);
\draw[edge] (u1)--(b1);
\draw[edge] (b1)--(c1);
\draw[edge] (b1)--(d1);
\draw[edge] (c1)--(d1);
\node at (2.25,0) {\(K_3\)};
\node[font=\bfseries] at (0.35,-1.75) {\(D_n\)};
\end{scope}
\draw[->,thick] (0,0.15)--(1,0.15);
\begin{scope}[xshift=4.35cm]
\node[bigclique] (K2) at (-1.35,0) {\(K_{n-4}\)};
\node[dot] (a2) at (K2.0) {};
\node at (-0.08,-0.30) {\(a\)};
\node[dot] (u2) at (0.95,0) {};
\node at (0.95,-0.30) {\(u\)};
\node[dot] (b2) at (2.15,0) {};
\node at (2.38,0) {\(b\)};
\node[dot] (c2) at (0.95,1.00) {};
\node at (0.95,1.30) {\(c\)};
\node[dot] (d2) at (0.95,-1.00) {};
\node at (0.95,-1.30) {\(d\)};
\draw[edge] (a2)--(u2)--(b2);
\draw[edge] (a2)--(c2)--(b2);
\draw[edge] (a2)--(d2)--(b2);
\node[font=\bfseries] at (0.55,-1.75) {\(\mathcal B_n\)};
\end{scope}
\end{tikzpicture}
\caption{The transformation from \(D_n\) to \(\mathcal B_n\).}
\label{fig:Dn-to-Bn}
\end{figure}
Let $\mb{x}$ be the positive Perron vector of $D_n$, and let $\lambda=\rho(D_n)$. 
By symmetry, \(x_c=x_d\). According to \(A(D_n)\mb{x}=\lambda \mb{x}\), we have 
\begin{equation*}
\lambda x_c=x_b+x_c,\qquad
\lambda x_b=x_u+2x_c,\qquad
\lambda x_u=x_a+x_b.
\end{equation*}
Thus, we obtain \(x_b=(\lambda-1)x_c\), \(x_u=(\lambda^2-\lambda-2)x_c\) and 
\(x_a=(\lambda^3-\lambda^2-3\lambda+1)x_c\). 

Note that \(K_{n-4}\) is a proper subgraph of \(D_n\) and \(n\geq 13\). Then Lemma~\ref{lem-1} 
implies that $\lambda>n-5\ge8$. Hence $\lambda^3-\lambda^2-3\lambda+1>1$ for \(\lambda\geq 8\). 
Thus, \(x_a>x_c\). Therefore
\[
\mb{x}^T(A(\mathcal B_n)-A(D_n))\mb{x}=4x_ax_c-2x_c^2=2x_c(2x_a-x_c)>0.
\]
Thus $\rho(\mathcal B_n)>\rho(D_n)$.
\end{proof}

\begin{lemma}\label{lem:coalescing}
Let \(N\ge9\). Form a graph from a clique \(K_N\), a vertex \(b\), and
three vertices \(x_1,x_2,x_3\) by joining \(b\) to every \(x_i\) and joining
each \(x_i\) to one vertex of the clique. Up to
isomorphism, the three possible graphs are shown in Figure~\ref{ffff-1}. 
Among all choices of the three
clique-neighbours of the \(x_i\), the spectral radius is maximized exactly
when all three \(x_i\) have the same clique-neighbour. In this case the
maximizing graph is \(\mathcal B_{N+4}\), up to isomorphism. 
\begin{figure}[htbp]
\centering
\begin{tikzpicture}
[scale=0.9,
every node/.style={font=\small},
dot/.style={circle,fill=black,inner sep=1.5pt,outer sep=0pt},
clique/.style={
circle,
draw=black,
thick,
minimum size=1.75cm,
align=center},
edge/.style={thick},
labelnode/.style={inner sep=1pt,fill=white}
]
\begin{scope}[xshift=-5.7cm]
\node[clique] (K1) at (-1.35,0) {\(K_N\)};
\node[dot] (a11) at (K1.35) {};
\node[dot] (a12) at (K1.0) {};
\node[dot] (a13) at (K1.-35) {};
\node[dot] (x11) at (0.70,0.78) {};
\node[labelnode] at (0.70,1.06) {\(x_1\)};
\node[dot] (x12) at (0.70,0) {};
\node[labelnode] at (0.70,-0.28) {\(x_2\)};
\node[dot] (x13) at (0.70,-0.78) {};
\node[labelnode] at (0.70,-1.06) {\(x_3\)};
\node[dot] (b1) at (1.90,0) {};
\node[labelnode] at (2.13,0) {\(b\)};
\draw[edge] (a11)--(x11)--(b1);
\draw[edge] (a12)--(x12)--(b1);
\draw[edge] (a13)--(x13)--(b1);
\end{scope}
\begin{scope}
\node[clique] (K2) at (-1.35,0) {\(K_N\)};
\node[dot] (a21) at (K2.25) {};
\node[dot] (a22) at (K2.-35) {};
\node[dot] (x21) at (0.70,0.78) {};
\node[labelnode] at (0.70,1.06) {\(x_1\)};
\node[dot] (x22) at (0.70,0) {};
\node[labelnode] at (0.70,-0.28) {\(x_2\)};
\node[dot] (x23) at (0.70,-0.78) {};
\node[labelnode] at (0.70,-1.06) {\(x_3\)};
\node[dot] (b2) at (1.90,0) {};
\node[labelnode] at (2.13,0) {\(b\)};
\draw[edge] (a21)--(x21)--(b2);
\draw[edge] (a21)--(x22)--(b2);
\draw[edge] (a22)--(x23)--(b2);
\end{scope}
\begin{scope}[xshift=5.7cm]
\node[clique] (K3) at (-1.35,0) {\(K_N\)};
\node[dot] (a31) at (K3.0) {};
\node[dot] (x31) at (0.70,0.78) {};
\node[labelnode] at (0.70,1.06) {\(x_1\)};
\node[dot] (x32) at (0.70,0) {};
\node[labelnode] at (0.70,-0.28) {\(x_2\)};
\node[dot] (x33) at (0.70,-0.78) {};
\node[labelnode] at (0.70,-1.06) {\(x_3\)};
\node[dot] (b3) at (1.90,0) {};
\node[labelnode] at (2.13,0) {\(b\)};
\draw[edge] (a31)--(x31)--(b3);
\draw[edge] (a31)--(x32)--(b3);
\draw[edge] (a31)--(x33)--(b3);
\end{scope}
\end{tikzpicture}
\caption{The three possible graphs up to isomorphism.}
\label{ffff-1}
\end{figure}
\end{lemma}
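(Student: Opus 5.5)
We compare the three configurations of Figure~\ref{ffff-1} by moving edges and using Lemma~\ref{lem-4}. Label them by the multiset of clique-neighbours: type $(1,1,1)$ (three distinct neighbours $a_1,a_2,a_3$), type $(2,1)$ ($x_1,x_2$ adjacent to $a$ and $x_3$ adjacent to $a'\neq a$), and type $(3)$ (all adjacent to one vertex $a$). The last is exactly $\mathcal B_{N+4}$ by construction. The plan is to show
\[
\rho(1,1,1)<\rho(2,1)<\rho(3),
\]
each time by a single edge rotation $G'=G-x_iv+x_iw$ with $x_w\ge x_v$ for the Perron vector of $G$. Every graph involved is connected, so Lemma~\ref{lem-4} applies and gives a strict increase.

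\emph{Step from type $(1,1,1)$ to type $(2,1)$.} Let $G$ be of type $(1,1,1)$ with Perron vector $\mb x$. Since the roles of $a_1$ and $a_2$ are symmetric and the permutation swapping $(a_1,x_1)$ with $(a_2,x_2)$ is an automorphism, the simple Perron vector gives $x_{a_1}=x_{a_2}$. Rotate $x_2a_2$ to $x_2a_1$. Note that $x_2a_1\notin E(G)$ and $x_{a_1}\ge x_{a_2}$. Lemma~\ref{lem-4} then gives $\rho(G)<\rho(G')$, and $G'$ is of type $(2,1)$.

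\emph{Step from type $(2,1)$ to type $(3)$.} This is the only place where a genuine comparison is needed, and it is the main obstacle. Let $G$ be of type $(2,1)$ with Perron vector $\mb x$ and $\lambda=\rho(G)$. We want $x_a\ge x_{a'}$, after which rotating $x_3a'$ to $x_3a$ finishes the argument by Lemma~\ref{lem-4}. From the eigen-equations at $a$ and $a'$, writing $\Sigma$ for the sum of $\mb x$ over the clique,
\[
\lambda x_a=\Sigma-x_a+x_{x_1}+x_{x_2},\qquad \lambda x_{a'}=\Sigma-x_{a'}+x_{x_3},
\]
so
\[
(\lambda+1)(x_a-x_{a'})=x_{x_1}+x_{x_2}-x_{x_3}.
\]
It therefore suffices to show $2x_{x_1}\ge x_{x_3}$, using $x_{x_1}=x_{x_2}$ by symmetry. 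Since $\lambda x_{x_1}=x_a+x_b$ and $\lambda x_{x_3}=x_{a'}+x_b$, this reduces to $2x_a+x_b\ge x_{a'}$.

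The cleanest route is to argue by contradiction. Suppose $x_{a'}>x_a$. Then the displayed identity forces $x_{x_3}>2x_{x_1}$, and hence
\[
x_{a'}+x_b>2x_a+2x_b .
\]
In particular $x_{a'}>2x_a$. On the other hand, subtracting the two clique equations gives
\[
(\lambda+1)(x_{a'}-x_a)=x_{x_3}-2x_{x_1}\le x_{x_3}=\frac{x_{a'}+x_b}{\lambda}.
\]
The neighbour $x_b$ is controlled by $\lambda x_b=2x_{x_1}+x_{x_3}\le 3\max_i x_{x_i}$, so $x_b$ is of order $x_{a'}/\lambda$. Hence the right-hand side is $O(x_{a'}/\lambda)$. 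Combining this with $x_{a'}-x_a>x_{a'}/2$ gives
\[
(\lambda+1)\tfrac12 x_{a'}\le \frac{x_{a'}}{\lambda}\,(1+O(1/\lambda)),
\]
which is impossible because $\lambda>N-1\ge 8$ by Lemma~\ref{lem-1}. Thus $x_a\ge x_{a'}$ and the rotation applies.

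\emph{Conclusion.} Any choice of clique-neighbours is isomorphic to one of the three types. The chain of strict inequalities shows that type $(3)\cong\mathcal B_{N+4}$ is the unique maximizer, as stated in the lemma.
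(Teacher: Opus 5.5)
Your proof is correct and uses the same chain of single edge rotations, type $(1,1,1)\to(2,1)\to(3)$, as the paper. The difference is only in how you certify the Perron-entry comparisons.

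The paper substitutes $\lambda x_{x_i}=x_v+x_b$ into the eigen-equation of each clique vertex $v$. This gives the closed form $x_v=\bigl(\lambda\Sigma+k(v)x_b\bigr)/\bigl(\lambda^2+\lambda-k(v)\bigr)$, where $k(v)$ is the number of $x_i$ attached to $v$. Since this is increasing in $k(v)$, one formula handles both steps at once: equal multiplicities give equal entries, and a larger multiplicity gives a strictly larger entry. No automorphisms are needed.

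You instead use an automorphism for the first step and a contradiction argument with asymptotic estimates for the second. Those estimates are sound once made explicit. Under $x_{a'}>x_a$ one has $\max_i x_{x_i}=x_{x_3}$, hence $(\lambda^2-3)x_b\le 3x_{a'}$, and your final inequality becomes $(\lambda+1)(\lambda^2-3)<2\lambda$, which is false for $\lambda>8$.

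The detour is unnecessary, though. Multiply your identity $(\lambda+1)(x_a-x_{a'})=2x_{x_1}-x_{x_3}$ by $\lambda$ and substitute the equations for $x_{x_1}$ and $x_{x_3}$. This gives directly $(\lambda^2+\lambda-1)(x_a-x_{a'})=x_a+x_b>0$, which is exactly the paper's monotonicity specialized to $k=2$ versus $k=1$.

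A minor plus of your version is that applying Lemma~\ref{lem-4} in the first step makes that inequality strict too, whereas the paper records only a non-strict inequality there. Either suffices for the conclusion.
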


\begin{proof}
For \(v\in V(K_N)\), let \(k(v)=|\{i: vx_i\in E(G)\}|\). 
Then \(k(v)\in\{0,1,2,3\}\) and
\[
\sum_{v\in V(K_N)}k(v)=3.
\]
Thus, up to isomorphism, the possible nonzero multiplicity patterns are
\((3)\), \((2,1)\) and \((1,1,1)\).

Let \(\lambda=\rho(G)\), and let \(\mb{x}\) be the positive unit Perron
vector of \(G\). Since \(K_N\) is a proper subgraph of the connected graph
\(G\), Lemma~\ref{lem-1} implies that 
\[
\lambda>\rho(K_N)=N-1\ge8, 
\]
for \(N\geq 9\). 

Let \(v\in V(K_N)\). For every \(x_i\) adjacent to \(v\), we have
\(\lambda x_i=x_v+x_b\). Hence
\begin{equation*}
\lambda x_v
=
\sum_{z\in V(K_N)}x_z-x_v
+
k(v)\frac{x_v+x_b}{\lambda}.
\end{equation*}
Thus
\begin{equation*}
x_v=
\frac{\lambda\sum_{z\in V(K_N)}x_z+k(v)x_b}
{\lambda^2+\lambda-k(v)}.
\end{equation*}
Since \(\lambda\ge8\), it follows that \(\lambda^2+\lambda-k(v)>0\) for every 
\(0\le k(v)\le3\). For \(0\le q<p\le3\),
\[
\frac{\lambda\sum_{z\in V(K_N)}x_z+px_b}{\lambda^2+\lambda-p}
-
\frac{\lambda\sum_{z\in V(K_N)}x_z+qx_b}{\lambda^2+\lambda-q} 
=
\frac{(p-q)\left(\lambda\sum_{z\in V(K_N)}x_z+(\lambda^2+\lambda)x_b\right)}
{(\lambda^2+\lambda-p)(\lambda^2+\lambda-q)}
>0.
\]

Therefore, if \(v,w\in V(K_N)\) satisfy \(k(w)>k(v)\), then \(x_w>x_v\). 

Let \(v,w\in V(K_N)\) with \(vx_i\in E(G)\) and \(k(w)\ge k(v)\), and let
\(G'=G-vx_i+wx_i\). Then 
\begin{equation*}
\mb{x}^T(A(G')-A(G))\mb{x}=2x_i(x_w-x_v)\ge0.
\end{equation*}
By the Rayleigh quotient, \(\rho(G')\ge\rho(G)\). If \(k(w)>k(v)\), then
\(x_w>x_v\), and hence \(\rho(G')>\rho(G)\).

\begin{figure}[htbp]
\centering
\begin{tikzpicture}
[scale=0.9,
every node/.style={font=\small},
dot/.style={circle,fill=black,inner sep=1.5pt,outer sep=0pt},
clique/.style={
circle,
draw=black,
thick,
minimum size=1.75cm,
align=center},
edge/.style={thick},
labelnode/.style={inner sep=1pt,fill=white}
]
\begin{scope}[xshift=-5.7cm]
\node[clique] (K1) at (-1.35,0) {\(K_N\)};

\node[dot] (a11) at (K1.35) {};
\node[dot] (a12) at (K1.0) {};
\node[dot] (a13) at (K1.-35) {};
\node[dot] (x11) at (0.70,0.78) {};
\node[labelnode] at (0.70,1.06) {\(x_1\)};
\node[dot] (x12) at (0.70,0) {};
\node[labelnode] at (0.70,-0.28) {\(x_2\)};
\node[dot] (x13) at (0.70,-0.78) {};
\node[labelnode] at (0.70,-1.06) {\(x_3\)};
\node[dot] (b1) at (1.90,0) {};
\node[labelnode] at (2.13,0) {\(b\)};
\draw[edge] (a11)--(x11)--(b1);
\draw[edge] (a12)--(x12)--(b1);
\draw[edge] (a13)--(x13)--(b1);
\node[font=\bfseries] at (0,-1.55) {\((1,1,1)\)};
\end{scope}
\draw[->,thick] (-3.25,0)--(-2.65,0);
\begin{scope}
\node[clique] (K2) at (-1.35,0) {\(K_N\)};
\node[dot] (a21) at (K2.35) {};
\node[dot] (a22) at (K2.-35) {};
\node[dot] (x21) at (0.70,0.78) {};
\node[labelnode] at (0.70,1.06) {\(x_1\)};
\node[dot] (x22) at (0.70,0) {};
\node[labelnode] at (0.70,-0.28) {\(x_2\)};
\node[dot] (x23) at (0.70,-0.78) {};
\node[labelnode] at (0.70,-1.06) {\(x_3\)};
\node[dot] (b2) at (1.90,0) {};
\node[labelnode] at (2.13,0) {\(b\)};
\draw[edge] (a21)--(x21)--(b2);
\draw[edge] (a21)--(x22)--(b2);
\draw[edge] (a22)--(x23)--(b2);
\node[font=\bfseries] at (0,-1.55) {\((2,1)\)};
\end{scope}
\draw[->,thick] (2.4,0)--(3,0);
\begin{scope}[xshift=5.7cm]
\node[clique] (K3) at (-1.35,0) {\(K_N\)};
\node[dot] (a31) at (K3.0) {};
\node[dot] (x31) at (0.70,0.78) {};
\node[labelnode] at (0.70,1.06) {\(x_1\)};
\node[dot] (x32) at (0.70,0) {};
\node[labelnode] at (0.70,-0.28) {\(x_2\)};
\node[dot] (x33) at (0.70,-0.78) {};
\node[labelnode] at (0.70,-1.06) {\(x_3\)};
\node[dot] (b3) at (1.90,0) {};
\node[labelnode] at (2.13,0) {\(b\)};
\draw[edge] (a31)--(x31)--(b3);
\draw[edge] (a31)--(x32)--(b3);
\draw[edge] (a31)--(x33)--(b3);
\node[font=\bfseries] at (0,-1.55) {\((3)\)};
\end{scope}
\end{tikzpicture}
\caption{The three patterns. }
\label{three}
\end{figure}

Let \(G_{(1,1,1)}\), \(G_{(2,1)}\), and \(G_{(3)}\) be the three graphs shown
in Figure~\ref{three}. Then
\[
G_{(1,1,1)}\longrightarrow G_{(2,1)}\longrightarrow G_{(3)}
\]
by the edge-moving operation described above. 
The first move is from multiplicity \(1\) to multiplicity \(1\), while the
second move is from multiplicity \(1\) to multiplicity \(2\). Hence, by the
edge-moving argument above,
\[
\rho(G_{(1,1,1)})\le \rho(G_{(2,1)})<\rho(G_{(3)}).
\]
Therefore the maximum is attained only for the pattern \((3)\). Equivalently,
all three \(x_i\) have the same clique-neighbour, and the resulting graph is
\(\mathcal B_{N+4}\), up to isomorphism. 
\end{proof}

\begin{lemma}\label{lem:U-small}
Let $n\geq 13$ and $t\in\{5,6,7\}$. Let $U_{n,t}$ be the graph obtained
by identifying one vertex of a clique $K_{n-t}$ with one vertex of a
clique $K_{t+1}$. Then $\rho(U_{n,t})<n-5$.
\end{lemma}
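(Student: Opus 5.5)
The plan is to bound $\rho(U_{n,t})$ by the edge-decomposition inequality of Lemma~\ref{lem-2}, with an explicit equitable-partition computation as a fallback for the tight cases. Let $w$ be the identified vertex, $A$ the other $n-t-1$ vertices of $K_{n-t}$, and $B$ the other $t$ vertices of $K_{t+1}$. Write $m=n-t\geq 6$.

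A naive split does not suffice. Splitting $E(U_{n,t})$ into $E(K_{n-t})$ and $E(K_{t+1})$ gives only $\rho\le (n-t-1)+t=n-1$, which is far too weak. Instead I would use the quotient matrix. The partition $\{A\},\{w\},\{B\}$ is equitable, so by Lemma~\ref{lem:equitable} $\rho(U_{n,t})$ is the largest root of
\[
\det\begin{pmatrix} m-2-\lambda & 1 & 0\\ m-1 & -\lambda & t\\ 0 & 1 & t-1-\lambda\end{pmatrix}=0 .
\]
Expanding, $\rho$ is the largest root of
\[
f(\lambda)=\lambda(\lambda-m+2)(\lambda-t+1)-(m-1)(\lambda-t+1)-t(\lambda-m+2).
\]
Since $f$ is a cubic with positive leading coefficient, it suffices to show $f(n-5)>0$ and that $f$ is increasing on $[n-5,\infty)$. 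Increasing can be checked by noting $n-5$ lies to the right of all roots of $f'$, or by checking that $f$ has its other two roots below $n-5$; for instance, $f$ changes sign below $m-1<n-5$.

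Now evaluate at $\lambda=n-5=m+t-5$. Here $\lambda-m+2=t-3$ and $\lambda-t+1=m-4$, so
\[
f(n-5)=(m+t-5)(t-3)(m-4)-(m-1)(m-4)-t(t-3).
\]
For $t\in\{5,6,7\}$ we have $t-3\ge 2$, and the first term is about $(t-3)m^2$, which dominates $(m-1)(m-4)\approx m^2$. Checking each $t$ separately with $m=n-t\ge 6$ (since $n\geq 13$) should give positivity. For example, with $t=5$ the expression is $2m(m-4)-(m-1)(m-4)-10=(m+1)(m-4)-10>0$ for $m\ge 6$. The cases $t=6,7$ are easier.

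The main obstacle is rigorously justifying that $n-5$ exceeds the largest root rather than merely satisfying $f(n-5)>0$. I would handle this by showing $f(m-1)<0$ or $f(t)<0$ to locate a root below. An alternative is to use the monotonicity argument: $\rho>m-1$ because $K_m$ is a proper subgraph (Lemma~\ref{lem-1}), and on $(m-2,\infty)$ one writes $f(\lambda)/(\lambda-m+2)$ as an increasing function in the style of the $\theta_i$ computation in Lemma~\ref{lem:branch-compression}. This gives a unique root exceeding $m-1$, and that root is less than $n-5$.
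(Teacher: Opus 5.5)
Your set-up coincides with the paper's. You use the same equitable partition and the same cubic: your $f$ is the paper's $\phi_t$ with $m=n-t$. Your values at $n-5$ also agree with the paper's, e.g.\ $(m+1)(m-4)-10=n^2-13n+26$ for $t=5$.

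The gap is the step you flag yourself and then do not close. $f(n-5)>0$ also holds when $n-5$ lies between the two smaller roots, so something more is needed, and none of your three suggestions supplies it as written.
\begin{enumerate}[(i)]
\item The first suggestion, that $n-5$ lies to the right of the roots of $f'$, is exactly what the paper proves. It computes $\phi_t'(n-5)>0$ for each $t$ and uses $\phi_t''(x)=6x+6-2n>0$ for $x\ge n-5$. You never verify this.
\item The second is incomplete. $f(m-1)<0$ together with $f(n-5)>0$ only gives an odd number of roots in $(m-1,n-5)$; it does not exclude two more roots beyond $n-5$. A single sign change below $m-1$ accounts for only one of the other two roots.
\item The third is false as stated. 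With $g(\lambda)=f(\lambda)/(\lambda-m+2)$ one gets $g'(\lambda)=2\lambda-t+1-\frac{(m-1)(t-m+1)}{(\lambda-m+2)^2}$. This tends to $-\infty$ as $\lambda\to(m-2)^+$ whenever $m<t+1$, which in the lemma's range means $t=7$, $n\in\{13,14\}$. For $t=7$, $n=13$ one has $g'(5)=-6$, so $g$ is not even monotone on $(m-1,\infty)$.
\end{enumerate}

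The repair is short using your own factored form. You have $f(0)=(m-1)(t-1)+t(m-2)>0$ and $f(m-1)=-t<0$ (also $f(t)=-(m-1)<0$). Hence the roots satisfy $\lambda_3<0<\lambda_2<m-1<\lambda_1$, and $f<0$ on $(\lambda_2,\lambda_1)$. Since $n-5=m+t-5>m-1>\lambda_2$ and $f(n-5)>0$, you get $\rho=\lambda_1<n-5$.

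Alternatively, the correct $\theta$-type function divides by both linear factors:
$h(\lambda)=f(\lambda)/\bigl((\lambda-m+2)(\lambda-t+1)\bigr)=\lambda-\frac{m-1}{\lambda-m+2}-\frac{t}{\lambda-t+1}$.
It is strictly increasing on $(\max\{m-2,t-1\},\infty)$. This interval contains $\rho$, because $\rho>\max\{m-1,t\}$ by Lemma~\ref{lem-1}. It also contains $n-5$, with $h(n-5)=f(n-5)/\bigl((t-3)(m-4)\bigr)>0$, so $\rho<n-5$.

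Either version avoids the paper's case-by-case evaluation of $\phi_t'(n-5)$. The root localization is uniform in $t$, and only $f(n-5)>0$ needs checking for each $t$.
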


\begin{proof}
Let $a$ be the common vertex of the two cliques. Then $A(U_{n,t})$ has
the equitable quotient matrix
\[
Q_t=
\begin{pmatrix}
0&n-t-1&t\\
1&n-t-2&0\\
1&0&t-1
\end{pmatrix}
\]
with respect to the partition \(V(G)=\{a\}\cup \{ V(K_{n-t})\setminus\{a\}\}\cup V(K_{t+1})\setminus\{a\}\). 

By a simple calculation, the characteristic polynomial of $Q_t$ is
\[
\phi_t(x)
=x^3+(3-n)x^2+(nt-t^2-t-2n+3)x
  +2nt-n-2t^2-2t+1 .
\]
Hence, by Lemma~\ref{lem:equitable}, $\rho(U_{n,t})$ is the largest root
of $\phi_t(x)=0$.

For $t=5,6,7$, we have
\[
\begin{array}{lll}
\phi_5(n-5)=n^2-13n+26,&
\phi_6(n-5)=2n^2-28n+62,&
\phi_7(n-5)=3n^2-45n+104,
\\[1mm]
\phi'_5(n-5)=n^2-11n+18,&
\phi'_6(n-5)=n^2-10n+6,&
\phi'_7(n-5)=n^2-9n-8.
\end{array}
\]
It is clear that all the above quantities are positive for $n\geq 13$.
Moreover, \(\phi_t''(x)=6x+6-2n\). 
Thus, for $x\geq n-5$,
\[
\phi_t''(x)\geq 6(n-5)+6-2n=4n-24>0.
\]
Therefore $\phi'_t(x)>0$ for $x\geq n-5$, and consequently
\[
\phi_t(x)\geq \phi_t(n-5)>0, 
\]
for \(x\geq n-5\). 
So $\phi_t(x)$ has no root in $[n-5,\infty)$. It follows that
\[
        \rho(U_{n,t})=\lambda_1(Q_t)<n-5,
\]
as desired.
\end{proof}

\begin{proof}[Proof of Theorem~\ref{thm:delta2}]
Let \(G\) be a connected graph of order \(n\ge 13\) with \(\delta(G)= 2\). Suppose
to the contrary that \(G\) has no even factor. Choose a barrier set \(S\). Let
\(|S|=s\geq 1\), and let  \(c\) be the number of components of \(G-S\).
Let \(C_0\) be a largest component of \(G-S\), and let \(t=n-|C_0|\).
Since \(\delta(G)= 2\), \(\sum_{u\in S}d_G(u)\geq 2|S|=2s\).
By Lemma~\ref{lem:barrier}, we obtain
\begin{equation}\label{equ-1}
        c=\sum_{u\in S}d_G(u)-2s+2\ge 2.
\end{equation}
Hence \(G-S\) has at least two components.
\begin{claim}\label{c-5.1}
$t\ge 4$.
\end{claim}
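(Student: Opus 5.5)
We need to show $t=n-|C_0|\ge 4$, i.e.\ the vertices outside a largest component $C_0$ of $G-S$ number at least four. Since $c\ge 2$ by \eqref{equ-1}, there is a component $C'\ne C_0$ of $G-S$, and $t\ge s+|C'|$. The plan is to take a component $C'\neq C_0$ and split into cases on $r'=e_G(C',S)$, which is odd by Lemma~\ref{lem:barrier}(iii).

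\emph{Case $r'=1$.} Lemma~\ref{lem:one-edge-size} with $\delta=2$ gives $|C'|\ge 3$. Since $s\ge1$, we get $t\ge s+|C'|\ge 4$.

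\emph{Case $r'\ge 3$.} By Lemma~\ref{lem:barrier}(iv), each vertex of $S$ sends at most one edge to $C'$. Hence $s\ge r'\ge 3$, so $t\ge s+|C'|\ge 3+1=4$.

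Both cases give $t\ge4$, which proves the claim. The main point to verify is that Lemma~\ref{lem:one-edge-size} applies when $\delta=2$. Here a component with one edge to $S$ and at most $2$ vertices would contain a vertex of degree less than $2$, which is ruled out directly by the counting in that lemma. I expect no real obstacle; the only subtlety is remembering to count $S$ itself in $t$ in the first case.
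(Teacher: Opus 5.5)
Your proposal is correct and follows the paper's proof essentially step for step. Like the paper, you fix a component $C'\neq C_0$ and split on $e_G(C',S)=1$, where Lemma~\ref{lem:one-edge-size} gives $|C'|\ge 3$, versus $e_G(C',S)\ge 3$, where Lemma~\ref{lem:barrier}(iv) forces $s\ge 3$; in both cases you then use $t\ge s+|C'|$.
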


\begin{proof}
Indeed, let \(C\ne C_0\) be a component of \(G-S\). If \(e_G(C,S)=1\), then
Lemma~\ref{lem:one-edge-size} implies that \(|C|\ge 3\), and hence
\(t\ge |S|+|C|\ge 4\).
If \(e_G(C,S)\geq 3\), then Lemma~\ref{lem:barrier} implies \(e_G(C,S)\leq s\),
and hence \(s\geq 3\).  Since \(C\) is nonempty, \(t\geq s+|C|\geq 4\).
\end{proof}

Next we divide the proof into the following three subcases according to
the value of \(t\).

\n{\bf Case 1.1.} \(t=4\).

We first prove that \(G-S\) has exactly one component distinct from \(C_0\).
Since \(c\ge 2\), there exists at least one component of \(G-S\) different
from \(C_0\). Suppose that there are two such components, say \(C_1\) and
\(C_2\). By the proof of Claim~\ref{c-5.1}, applied to \(C_1\), we have
\(s+|C_1|\geq 4\).  Since \(C_2\) is nonempty, it follows that
\begin{equation*}
        t\ge s+|C_1|+|C_2|\ge 5,
\end{equation*}
which contradicts \(t=4\). Hence \(G-S\) has exactly one component distinct
from \(C_0\).

Let \(C\) be the unique component of \(G-S\) distinct from \(C_0\).
If \(e_G(C,S)=1\), then Lemma~\ref{lem:one-edge-size} implies that \(|C|\ge 3\). Since
\(t=s+|C|=4\), it follows that \(s=1\) and \(|C|=3\).

Write $S=\{u\}$.  Let $a$ be the unique neighbour of $u$ in $C_0$, and let $b$ be the unique neighbour of $u$ in $C$.  If we complete $C_0$ to a clique and complete $C$ to a clique, keeping only the two edges $ua$ and $ub$ between $u$ and the two components, then we obtain exactly the graph $D_n$.  Thus $G$ is a spanning subgraph of $D_n$.  By Lemma~\ref{lem-1} and Lemma~\ref{lem:Ptheta},
\[
        \rho(G)\leq\rho(D_n)<\rho(\mathcal B_n).
\]

It remains to consider the case \(e_G(C,S)\ge 3\). Since \(t=4\), we must have
\(s=3\) and \(|C|=1\). Let \(C=\{b\}\). Then \(b\) is adjacent to all vertices
of \(S\). By \eqref{equ-1}, we have
\begin{equation*}
        \sum_{u\in S}d_G(u)=c+2s-2=6.
\end{equation*}
Since each vertex of \(S\) has degree at least two and \(s=3\),
every vertex of \(S\) has degree exactly two. Thus each vertex of \(S\)
has one neighbour \(b\) and one neighbour in \(C_0\). Completing \(C_0\)
to a clique and applying Lemma~\ref{lem:coalescing}, we obtain
\begin{equation*}
        \rho(G)\le \rho(\mathcal{B}_n),
\end{equation*}
where equality holds if and only if \(G\cong \mathcal{B}_n\).

\n{\bf Case 1.2.} \(5\le t\le 7\).

Let \(G'\) be the graph obtained from \(G\) by completing \(C_0\) and
\(V(G)\setminus V(C_0)\) into cliques. Then \(G\) is a spanning subgraph of \(G'\).  Lemma~\ref{lem-1} gives
\begin{equation*}
        \rho(G)\leq \rho(G').
\end{equation*}
\begin{claim}
Every vertex in \(V(G')\setminus V(C_0)\) has at most one neighbour in \(C_0\).
\end{claim}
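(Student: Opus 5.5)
The claim follows directly from the structure of the barrier set, because the completion that produces \(G'\) does not create any new edges between \(C_0\) and its complement. The plan is to sort the vertices of \(V(G')\setminus V(C_0)\) into two types and bound their neighbours in \(C_0\) separately.

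First I record what the completion does. \(G'\) is obtained from \(G\) by adding edges only inside \(V(C_0)\) and only inside \(V(G)\setminus V(C_0)\). Hence for every \(v\in V(G)\setminus V(C_0)\),
\[
N_{G'}(v)\cap V(C_0)=N_G(v)\cap V(C_0).
\]
So it suffices to show that every vertex of \(V(G)\setminus V(C_0)\) has at most one neighbour in \(C_0\) already in \(G\).

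Next I split \(V(G)\setminus V(C_0)\) into \(S\) and the vertex sets of the components of \(G-S\) other than \(C_0\).

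\begin{enumerate}[(1)]
\item Let \(v\) lie in a component \(C\neq C_0\) of \(G-S\). Two distinct components of \(G-S\) are not joined by any edge of \(G-S\), and hence not by any edge of \(G\). So \(v\) has no neighbour in \(C_0\).
\item Let \(v\in S\). Since \(S\) is a barrier set, property (iv) of Lemma~\ref{lem:barrier} applies to the component \(C_0\) of \(G-S\). It says that \(v\) has at most one neighbour in \(C_0\).
\end{enumerate}

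Combining the two cases with the identity above proves the claim. There is no real obstacle here; the only point needing care is the first observation, that the clique completions add no edges across the cut \((V(C_0),V(G)\setminus V(C_0))\).
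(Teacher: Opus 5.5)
Your proof is correct and follows essentially the same route as the paper. Both arguments note that completing \(C_0\) and \(V(G)\setminus V(C_0)\) into cliques adds no edges across the cut, so \(e_{G'}(z,C_0)=e_G(z,C_0)\); they then apply property (iv) of Lemma~\ref{lem:barrier} for \(z\in S\), and use the absence of edges between distinct components of \(G-S\) for the remaining vertices.
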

\begin{proof}
The graph \(G'\) is obtained from \(G\) by adding only edges inside \(C_0\) and
inside \(V(G)\setminus V(C_0)\).  Hence \(e_{G'}(z,C_0)=e_G(z,C_0)\) for every
\(z\in V(G)\setminus V(C_0)\).  Let \(z\in V(G)\setminus V(C_0)\).
If \(z\in S\), then Lemma~\ref{lem:barrier} implies that \(e_G(z,C_0)\leq 1\).
If \(z\notin S\), then \(z\) belongs to a component of \(G-S\) different from
\(C_0\), and hence \(e_G(z,C_0)=0\).
Therefore \(e_{G'}(z,C_0)\leq 1\).
\end{proof}

Let \(\mb{x}=(x_v)_{v\in V(G')}\) be the Perron vector of \(G'\). Choose a
vertex \(a\in C_0\) such that \(x_a=\max\{x_v:v\in V(C_0)\}\).
Let \(Z=\{z\in V(G')\setminus V(C_0): e_{G'}(z,C_0)=1\}\).
For each \(z\in Z\), let \(v_z\) be the unique neighbour of \(z\) in \(C_0\).
Define \(G''=G'-\{zv_z:z\in Z,\ v_z\ne a\}+\{za:z\in Z,\ v_z\ne a\}\).
Since $x_a\geq x_{v_z}$ for every moved edge, repeated applications of Lemma~\ref{lem-4} give \(\rho(G'')\geq \rho(G')\).  Adding all missing edges from \(a\) to
\(V(G')\setminus V(C_0)\) gives \(U_{n,t}\).  Therefore, Lemma~\ref{lem-1} gives
\begin{equation*}
        \rho(G')\leq \rho(G'')\leq \rho(U_{n,t}).
\end{equation*}
Combining this with \(\rho(G)\leq \rho(G')\), we obtain
\begin{equation}\label{equ-3}
\rho(G)\leq \rho(U_{n,t}).
\end{equation}
Since $K_{n-4}$ is a proper subgraph of $\mathcal{B}_n$, Lemma~\ref{lem-1} gives $\rho(\mathcal{B}_n)>n-5$.
By Lemma~\ref{lem:U-small} and \eqref{equ-3}, for \(5\leq t\leq 7\),
\(\rho(G)\leq \rho(U_{n,t})<n-5<\rho(\mathcal{B}_n)\).

\medskip
\n{\bf Case 1.3.} \(t\ge 8\).

Let \(F\) be the bipartite graph with \(E(F)=\{uv\in E(G):u\in S,\
v\in V(G)\setminus S\}\), and let \(H=G-E(F)\).

\begin{claim}
The largest component of \(H\) has order \(n-t\).
\end{claim}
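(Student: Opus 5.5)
The plan is to describe the components of $H$ explicitly and then compare their orders. Since $S$ is independent (Lemma~\ref{lem:barrier}(i)), every edge of $G$ incident with a vertex of $S$ goes to $V(G)\setminus S$, so it lies in $E(F)$. Hence in $H=G-E(F)$ every vertex of $S$ is isolated. Moreover, $F$ contains no edge with both ends in $V(G)\setminus S$. Therefore $H-S$ coincides with $G-S$.

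It follows that the components of $H$ are exactly the following:
\begin{enumerate}[(i)]
\item the $s$ singleton components $\{u\}$ for $u\in S$;
\item the components $C_0,C_1,\ldots,C_{c-1}$ of $G-S$, each with its edge set unchanged.
\end{enumerate}
The only point to verify here is that removing $E(F)$ deletes no edge inside any $C_i$. This holds because every edge of $F$ has one end in $S$.

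Now compare orders. By the choice of $C_0$ as a largest component of $G-S$, we have $|C_i|\le |C_0|=n-t$ for every $i$. The singletons have order $1$, and $1\le n-t$ because $C_0$ is nonempty. So the largest component of $H$ is $C_0$, and its order is $n-t$.

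I do not expect a real obstacle. The only subtlety is that the independence of $S$ from Lemma~\ref{lem:barrier} is essential: without it, $H$ could keep edges inside $S$ and merge several vertices of $S$ into a larger component. Once this is noted, the claim follows directly. It then feeds the subsequent bound $\rho(H)\le n-t-1\le n-9$, to be combined with $\rho(F)\le\sqrt{e(F)}$ via Lemma~\ref{lem-2} and Lemma~\ref{lem-3}.
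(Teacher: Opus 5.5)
Your proof is correct and follows the paper's own argument. The components of $H$ are exactly the components of $G-S$ together with the vertices of $S$ as isolated vertices, so $C_0$ is a largest component and has order $n-t$. You also spell out that this relies on the independence of $S$, which the paper uses only implicitly.
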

\begin{proof}
The components of \(H\) are the components of \(G-S\) and the isolated
vertices of \(S\). Thus \(C_0\) is a component of \(H\), and every other
component of \(H\) has order at most \(|C_0|\). Therefore the largest
component of \(H\) has order \(|C_0|=n-t\).
This proves the claim.
\end{proof}

Thus
\begin{equation}\label{equ-4}
\rho(H)\le n-t-1.
\end{equation}

\begin{claim}
For the graph \(F\), we have
\begin{equation}\label{equ-5}
|E(F)|\le 2t-1.
\end{equation}
\end{claim}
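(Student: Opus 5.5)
The plan is to compute $|E(F)|$ exactly from the barrier identity and then bound it by counting vertices outside $C_0$. By Lemma~\ref{lem:barrier}(i), the set $S$ is independent. Hence every edge of $G$ at a vertex of $S$ goes to $V(G)\setminus S$. So $E(F)$ consists of all edges incident with $S$, and
\[
|E(F)|=\sum_{u\in S}d_G(u).
\]
By Lemma~\ref{lem:barrier}(ii), equivalently \eqref{equ-1}, this gives
\[
|E(F)|=c+2s-2 .
\]
It therefore suffices to bound the number $c$ of components of $G-S$ in terms of $t$ and $s$.

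For this I count the vertices outside $C_0$. By definition $t=n-|C_0|=|V(G)\setminus V(C_0)|$. The set $V(G)\setminus V(C_0)$ is the disjoint union of $S$ and the vertex sets of the $c-1$ components of $G-S$ other than $C_0$. Each of these components is nonempty, so
\[
t\ \ge\ s+(c-1), \qquad\text{that is,}\qquad c\le t-s+1 .
\]
Substituting this into the formula for $|E(F)|$ gives $|E(F)|\le (t-s+1)+2s-2=t+s-1$.

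Finally, $S\subseteq V(G)\setminus V(C_0)$ (again using that $C_0$ is nonempty), so $s\le t$. Hence $|E(F)|\le t+s-1\le 2t-1$, which is \eqref{equ-5}.

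I do not expect a real obstacle here. The only points needing care are that $E(F)$ captures \emph{all} edges at $S$, which relies on the independence of $S$, and that the crude bound of one vertex per non-largest component is enough. No use of Lemma~\ref{lem:one-edge-size} or of $t\ge 8$ is needed for this claim; those hypotheses are presumably used afterwards, when combining \eqref{equ-4}, \eqref{equ-5}, Lemma~\ref{lem:hong-component} (or Lemma~\ref{lem-3}) and Lemma~\ref{lem-2}.
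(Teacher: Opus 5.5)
Your proof is correct and follows the paper's argument essentially verbatim: independence of $S$ and the barrier identity give $|E(F)|=\sum_{u\in S}d_G(u)=c+2s-2$, and then $c-1\le t-s$ and $s\le t$ give $|E(F)|\le t+s-1\le 2t-1$. One small quibble: $S\subseteq V(G)\setminus V(C_0)$ holds simply because $C_0$ is a component of $G-S$, not because $C_0$ is nonempty.
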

\begin{proof}
Since \(S\) is independent and \(F\) consists of all edges between \(S\) and
\(G-S\), we have \(|E(F)|=\sum_{u\in S}d_G(u)\). By Lemma~\ref{lem:barrier},
\(c=\sum_{u\in S}d_G(u)-2s+2\), and hence
\(\sum_{u\in S}d_G(u)=c+2s-2\). Since the \(c-1\) components of \(G-S\)
distinct from \(C_0\) are contained in \(V(G)\setminus (S\cup V(C_0))\), we
have \(c-1\le t-s\). Also \(s\le t\). Thus
\[
|E(F)|=\sum_{u\in S}d_G(u)=c+2s-2\le(t-s+1)+2s-2=t+s-1\le2t-1.
\]
This proves the claim.
\end{proof}

By Lemmas~\ref{lem-2} and \ref{lem-3}, together with \eqref{equ-4} and \eqref{equ-5}, we obtain
\[
\rho(G)\le \rho(H)+\rho(F)\le n-t-1+\sqrt{|E(F)|}
\le n-t-1+\sqrt{2t-1}.
\]
For $t\geq8$,
\[
        (t-4)^2-(2t-1)=t^2-10t+17>0,
\]
because the polynomial $t^2-10t+17$ is increasing for $t\geq8$ and has value $1$ at $t=8$.  Hence $\sqrt{2t-1}<t-4$, and therefore
\begin{equation}\label{equ-6}
        \rho(G)<n-t-1+t-4=n-5.
\end{equation}
Recall that \(\rho(\mathcal{B}_n)>n-5\).  By \eqref{equ-6}, \(\rho(G)<\rho(\mathcal{B}_n)\), as desired.
\end{proof}

\section{Proof of Theorem~\ref{thm:connected-even}}

For integers $a,b\geq2$, let $T(a,b)$ be the graph obtained from two disjoint rooted cliques $K_a$ and $K_b$ by adding one edge between the two roots.  Thus $\mathcal C_{n,\delta}=T(\delta+1,n-\delta-1)$.

\begin{lemma}\label{lem:two-clique-compression}
Let $m\geq2$ and $n\geq2m$. Among the graphs $T(a,n-a)$ with
$m\leq a\leq n-m$, the spectral radius is maximized only by
$T(m,n-m)$ and $T(n-m,m)$.
\end{lemma}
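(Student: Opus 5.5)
I will mimic the compression argument of Lemma~\ref{lem:branch-compression}, now with two rooted cliques joined by an edge between the roots instead of joined through a central vertex. Since $T(a,n-a)\cong T(n-a,a)$, it suffices to show the following. If $m< a\leq n-a$, i.e.\ the smaller clique has more than $m$ vertices, then moving one non-root vertex from the smaller clique $K_a$ to the larger clique $K_{n-a}$ strictly increases the spectral radius. Iterating this finitely many times, any maximizer must have $\min(a,n-a)=m$. This gives exactly $T(m,n-m)$ and $T(n-m,m)$, which are isomorphic.

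For the compression step, let $T=T(a,b)$ with $b=n-a\geq a>m\geq2$, so that $a\geq3$ and the smaller clique has at least two non-root vertices. Let $\lambda=\rho(T)$ and let $\mb x$ be its Perron vector. Write $y_1,z_1$ for the entries at the root and the non-root vertices of $K_a$, and $y_2,z_2$ for those of $K_b$; by symmetry, entries on non-root vertices within each clique are equal. The eigen-equations are
\[
\lambda y_1=y_2+(a-1)z_1,\qquad \lambda z_1=y_1+(a-2)z_1,
\]
and the analogous ones with indices swapped and $b$ in place of $a$. Lemma~\ref{lem-1} gives $\lambda>b-1\geq a-1$, so $z_1=y_1/(\lambda-a+2)$ and $z_2=y_2/(\lambda-b+2)$. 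Substituting gives $\theta_1y_1=y_2$ and $\theta_2y_2=y_1$, where $\theta_i=\lambda-\frac{a_i-1}{\lambda-a_i+2}$ and $(a_1,a_2)=(a,b)$. The key comparison is the one already done in Lemma~\ref{lem:branch-compression}: $a\leq b$ implies $\theta_1\geq\theta_2$. Together with $\theta_1\theta_2=1$ and positivity, this gives $\theta_1\geq1\geq\theta_2$, hence $y_2\geq y_1$. Then $z_2\geq z_1$ follows from $\lambda-b+2\leq\lambda-a+2$.

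Now pick a non-root vertex $w$ of $K_a$. Delete its $a-1$ edges inside $K_a$ and join it to all $b$ vertices of $K_b$; the result is $T(a-1,b+1)$. The Rayleigh quotient change is
\[
2z_1\bigl(y_2+(b-1)z_2-y_1-(a-2)z_1\bigr)\geq 2z_1(b-a+1)z_1>0,
\]
using $y_2\geq y_1$ and $z_2\geq z_1$. Hence $\rho(T(a-1,b+1))>\rho(T(a,b))$. The new pair still satisfies $a-1\geq m$ and $b+1\leq n-m$, so it remains admissible.

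The main obstacle is establishing the Perron-vector monotonicity $y_2\geq y_1$ and $z_2\geq z_1$ across the bridge. This is the step that differs from Lemma~\ref{lem:branch-compression}, because there is no common central vertex. The reciprocal relation $\theta_1\theta_2=1$ handles it. One must also check that $\theta_i>0$, which holds automatically from $y_2/y_1>0$. The case $a=b$ is harmless, since then the inequality $b-a+1>0$ still holds. Finiteness of the admissible range then yields that the maximizers are exactly the stated graphs.
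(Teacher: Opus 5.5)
Your proposal is correct and follows the paper's proof: you use the same one-vertex compression from the smaller clique into the larger one, the same cross-bridge monotonicity $y_2\geq y_1$, $z_2\geq z_1$ obtained from $\theta_1\theta_2=1$ and $\theta_1\geq\theta_2$, and the same Rayleigh-quotient bound $2(b-a+1)z_1^2>0$. Indexing the two cliques by $1,2$ rather than by their sizes, as you do, is slightly cleaner than the paper's notation when $a=b$.
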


\begin{proof}
Let $a,b$ be integers such that $m<a\leq b$ and $a+b=n$. We show that
\[
        \rho(T(a-1,b+1))>\rho(T(a,b)).
\]
Let $T=T(a,b)$, let $\lambda=\rho(T)$, and let $\mb{x}$ be the positive
unit Perron vector of $A(T)$. For the branch of order $s\in\{a,b\}$, let
$r_s$ be its root and let $W_s$ be the set of its non-root vertices. By
symmetry, \(x_{r_s}=y_s\) and \(x_v=z_s\), where \(v\in W_s\). By \(A(T)\mb{x}=\lambda\mb{x}\), we have 
\begin{equation}\label{e-a1}
\lambda y_s=y_{n-s}+(s-1)z_s,
\end{equation}
and
\begin{equation}\label{e-a2}
\lambda z_s=y_s+(s-2)z_s.
\end{equation}
Since $K_s$ is a proper subgraph of the connected graph $T$, Lemma~\ref{lem-1} implies that 
$\lambda>\rho(K_s)=s-1$. Hence $\lambda-s+2>0$, and by \eqref{e-a2},
\begin{equation}\label{e-a3}
        z_s=\frac{y_s}{\lambda-s+2}.
\end{equation}
Substituting \eqref{e-a3} into \eqref{e-a1}, we get \( y_{n-s}=\theta_sy_s\), where 
\[
        \theta_s=\lambda-\frac{s-1}{\lambda-s+2}.
\]
Then $y_b=\theta_a y_a$, $y_a=\theta_b y_b$, and so
$\theta_a\theta_b=1$. Moreover, since $b\geq a$,
\[
\frac{b-1}{\lambda-b+2}-\frac{a-1}{\lambda-a+2}
=
\frac{(b-a)(\lambda+1)}
{(\lambda-b+2)(\lambda-a+2)}
\geq0 .
\]
Thus $\theta_b\leq \theta_a$. Together with $\theta_a\theta_b=1$, this
implies $\theta_a\geq1\geq\theta_b$, and therefore $y_b\geq y_a$. 
On the other hand, by \eqref{e-a3}, 
\[
        \frac{z_b}{z_a}
        =\frac{y_b}{y_a}\cdot
        \frac{\lambda-a+2}{\lambda-b+2}
        \geq1,
\]
so $z_b\geq z_a$.

Choose $w\in W_a$, and let $T'=T(a-1,b+1)$ be obtained from $T$ by moving
$w$ from the branch of order $a$ to the branch of order $b$. Let 
\[
        \Delta^-=\{wv:\ v\in \{r_a\}\cup(W_a\setminus\{w\})\},
        \qquad
        \Delta^+=\{wv:\ v\in \{r_b\}\cup W_b\}.
\]
Then $E(T')=(E(T)\setminus\Delta^-)\cup\Delta^+$. 
Note that $x_w=z_a$. Then 
\begin{align*}
\mb{x}^{T}(A(T')-A(T))\mb{x}
&=2z_a\left(
        \sum_{v\in\{r_b\}\cup W_b}x_v
        -
        \sum_{v\in\{r_a\}\cup(W_a\setminus\{w\})}x_v
        \right)                                                     \\
&=2z_a\bigl(y_b+(b-1)z_b-y_a-(a-2)z_a\bigr)                         \\
&\geq 2z_a\bigl(y_a+(b-1)z_a-y_a-(a-2)z_a\bigr)                     \\
&=2(b-a+1)z_a^2\\&>0 .
\end{align*}
Consequently, \( \rho(T')>\rho(T)\). 

Repeating this operation until the smaller branch has order $m$ gives the
desired maximum. Since the inequality is strict at each step, equality can
occur only for $T(m,n-m)$ and $T(n-m,m)$. 
\end{proof}

\begin{lemma}\label{lem:bridge-case}
Let \(G\) be a connected graph of order \(n\) with
\(\delta(G)\ge\delta\ge2\), where \(n\ge2\delta+2\). If \(G\) has a bridge,
then
\begin{equation*}
\rho(G)\le\rho(\mathcal C_{n,\delta}),
\end{equation*}
with equality if and only if \(G\cong\mathcal C_{n,\delta}\).
\end{lemma}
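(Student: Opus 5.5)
Let $e=xy$ be a bridge of $G$, and let $X$ and $Y$ be the vertex sets of the two components of $G-e$, with $x\in X$ and $y\in Y$. The plan is to complete each side to a clique, so that $\rho(G)$ is bounded by the spectral radius of a two-clique graph, and then apply Lemma~\ref{lem:two-clique-compression}.

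\emph{Side sizes.} Set $a=|X|$ and $b=|Y|$, so $a+b=n$. The component $X$ sends exactly one edge to $Y$. Hence, exactly as in Lemma~\ref{lem:one-edge-size},
\[
\delta a\le\sum_{v\in X}d_G(v)=2e(G[X])+1\le a(a-1)+1 .
\]
This gives $a(\delta-a+1)\le 1$, which forces $a\ge\delta+1$. The same argument gives $b\ge\delta+1$. So $\delta+1\le a\le n-\delta-1$, and this range is nonempty because $n\ge2\delta+2$.

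\emph{Completion.} Let $G'$ be obtained from $G$ by adding every missing edge inside $X$ and inside $Y$, keeping the single edge $xy$ between the two sides. Then $G'\cong T(a,b)$ with roots $x$ and $y$, and $G$ is a spanning subgraph of the connected graph $G'$. By Lemma~\ref{lem-1}, $\rho(G)\le\rho(T(a,b))$, with equality only if $G=G'$.

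\emph{Comparison.} Apply Lemma~\ref{lem:two-clique-compression} with $m=\delta+1$; its hypothesis $n\ge2m$ is exactly $n\ge2\delta+2$. It gives
\[
\rho(T(a,b))\le\rho(T(\delta+1,n-\delta-1))=\rho(\mathcal C_{n,\delta}),
\]
with equality only if $\{a,b\}=\{\delta+1,n-\delta-1\}$. Chaining the two inequalities gives $\rho(G)\le\rho(\mathcal C_{n,\delta})$.

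\emph{Equality.} If $\rho(G)=\rho(\mathcal C_{n,\delta})$, both inequalities are tight. Tightness in Lemma~\ref{lem-1} gives $G=G'$, so both sides of $G$ are cliques joined by one edge. Tightness in the compression step gives that one side has order $\delta+1$. Hence $G\cong\mathcal C_{n,\delta}$. The converse is trivial.

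The only step that needs any care is the side-size bound, since it is what places $(a,b)$ inside the admissible range of Lemma~\ref{lem:two-clique-compression}; the rest is direct. A minor point is that when $a=b$, both $T(a,b)$ and $T(b,a)$ describe the same graph, so symmetry of the two sides causes no ambiguity in the equality case.
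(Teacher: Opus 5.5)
Your proposal is correct and follows the paper's proof essentially step for step. Like the paper, you use the degree count $a(\delta-a+1)\le 1$ to force both sides of the bridge to have at least $\delta+1$ vertices. You then complete each side to a clique to get $T(a,n-a)$ and apply Lemma~\ref{lem-1}, and finish with Lemma~\ref{lem:two-clique-compression} for $m=\delta+1$, with the same equality analysis.
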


\begin{proof}
Let \(e\) be a bridge of \(G\), and let \(X\) and \(Y\) be the two components
of \(G-e\). Let \(|X|=a\) and \(|Y|=n-a\). Note that \(e\) is a bridge Then 
\(e_G(X,Y)=1\). 

We first show that
\begin{equation*}
a\ge\delta+1,\qquad n-a\ge\delta+1.
\end{equation*}
Suppose that \(1\le a\le\delta\). Then 
\begin{equation*}
1=e_G(X,Y)
=\sum_{v\in X}d_G(v)-2e_G(X)
\ge a\delta-a(a-1)
=a(\delta-a+1)\ge\delta,
\end{equation*}
which is impossible since \(\delta\ge2\). Hence \(a\ge\delta+1\). 
Similarly, we obtain \(n-a\ge\delta+1\). 

Now complete \(X\) and \(Y\) into cliques and keep the bridge \(e\). The
resulting graph is \(T(a,n-a)\), and \(G\) is a spanning subgraph of
\(T(a,n-a)\). Thus, by Lemma~\ref{lem-1}, we have \(\rho(G)\le\rho(T(a,n-a))\). 
Note that \(a\ge\delta+1\) and \(n-a\ge\delta+1\). 
By Lemma~\ref{lem:two-clique-compression}, we have 
\begin{equation*}
\rho(T(a,n-a))
\le
\rho(T(\delta+1,n-\delta-1))
=
\rho(\mathcal C_{n,\delta}).
\end{equation*}
Therefore 
\begin{equation*}
\rho(G)\le\rho(\mathcal C_{n,\delta}).
\end{equation*}

Equality can occur only when \(G=T(a,n-a)\) and \(\{a,n-a\}=\{\delta+1,n-\delta-1\}\), 
that is, \(G\cong\mathcal C_{n,\delta}\). Conversely, if
\(G\cong\mathcal C_{n,\delta}\), then equality holds. 
\end{proof}

\begin{lemma}\label{lem:two-trees-even}
If a graph \(G\) has two edge-disjoint spanning trees, then \(G\) has a
connected even factor.
\end{lemma}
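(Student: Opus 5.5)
The plan is the classical Jaeger argument. Let $T_1$ and $T_2$ be two edge-disjoint spanning trees of $G$, and let $O$ be the set of vertices of $G$ having odd degree in $T_1$. Then $|O|$ is even by the handshake lemma applied to $T_1$.

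\emph{Step 1: a $T$-join in $T_2$.} Since $T_2$ is a spanning tree, it contains a spanning subgraph $J\subseteq T_2$ whose odd-degree vertices are exactly $O$. To see this, pair up the vertices of $O$ arbitrarily and take the symmetric difference of the edge sets of the $T_2$-paths joining each pair. Each internal vertex of such a path has its degree changed by an even amount, and each endpoint by an odd amount. Equivalently, $J$ is the set of edges $e$ of $T_2$ such that one component of $T_2-e$ contains an odd number of vertices of $O$.

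\emph{Step 2: the factor.} Set $F=T_1\cup J$, a spanning subgraph of $G$. Since $E(T_1)\cap E(J)=\emptyset$, we have $d_F(v)=d_{T_1}(v)+d_J(v)$ for every vertex $v$. This sum is even: for $v\in O$ both terms are odd, and otherwise both are even. The graph $F$ contains $T_1$, so it is connected and spanning. Every vertex has positive degree because $F$ is connected and has at least two vertices; for a one-vertex graph the statement is trivial or vacuous.

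The only real point of care is the existence of $J$, that is, the parity bookkeeping in Step 1. It is routine once $|O|$ is seen to be even, so I do not expect a genuine obstacle. The proof is short and uses no spectral input.
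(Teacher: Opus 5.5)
Your proposal is correct and is essentially the paper's proof: both take $O$ to be the set of odd-degree vertices of $T_1$, find $J\subseteq T_2$ whose odd-degree vertices are exactly $O$, and set $F=T_1\cup J$. Your second description of $J$ (edges $e$ of $T_2$ for which a component of $T_2-e$ contains an odd number of vertices of $O$) coincides with the paper's rooted-subtree rule $|S_y\cap O|\equiv 1\pmod 2$; the paper checks the parities directly instead of through your path-pairing symmetric difference.
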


\begin{proof}
Let \(T_1\) and \(T_2\) be two edge-disjoint spanning trees of \(G\). Set
\[
O=\{v\in V(G):d_{T_1}(v)\equiv1\pmod2\}.
\]
By the handshaking lemma, \(|O|\) is even.

We shall find a subgraph \(J\) of \(T_2\) such that
\[
d_J(v)\equiv
\begin{cases}
1\pmod2, & v\in O,\\
0\pmod2, & v\notin O.
\end{cases}
\]
Root \(T_2\) at a vertex \(r\). For each edge \(e=xy\in E(T_2)\), where
\(y\) is the child of \(x\), let \(S_y\) be the vertex set of the subtree
rooted at \(y\). Define
\[
J=\{xy\in E(T_2): |S_y\cap O|\equiv1\pmod2\}.
\]

Let \(C(v)\) be the set of children of \(v\). If \(v\ne r\), let \(e_v\)
be the edge joining \(v\) to its parent. Then, by the definition of \(J\),
\[
\begin{aligned}
d_J(v)
&\equiv |S_v\cap O|+\sum_{y\in C(v)}|S_y\cap O|                    \\
&\equiv |S_v\cap O|+|(S_v\setminus\{v\})\cap O|                    \\
&\equiv
\begin{cases}
1\pmod2, & v\in O,\\
0\pmod2, & v\notin O.
\end{cases}
\end{aligned}
\]
For the root \(r\), there is no parent edge, and hence
\[
d_J(r)
\equiv \sum_{y\in C(r)}|S_y\cap O|
=|O\setminus\{r\}|
\equiv
\begin{cases}
1\pmod2, & r\in O,\\
0\pmod2, & r\notin O,
\end{cases}
\]
since \(|O|\) is even.

Now let
\[
F=T_1\cup J.
\]
Since \(T_1\) is a spanning tree, \(F\) is spanning and connected. Moreover,
for every \(v\in V(G)\),
\[
d_F(v)=d_{T_1}(v)+d_J(v)\equiv0\pmod2.
\]
Thus \(F\) is a connected even factor of \(G\).
\end{proof}

\begin{lemma}[\cite{NashWilliams1961,Tutte1961}]\label{lem:nwt}
A graph $G$ has two edge-disjoint spanning trees if and only if, for every partition $\mathcal P$ of $V(G)$,
\[
        e_G(\mathcal P)\geq2(|\mathcal P|-1),
\]
where $e_G(\mathcal P)$ denotes the number of edges whose ends lie in different parts of $\mathcal P$.
\end{lemma}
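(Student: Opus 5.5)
The necessity direction is immediate, and I would dispose of it first. Suppose $T_1,T_2$ are edge-disjoint spanning trees and $\mathcal P$ is a partition of $V(G)$. Contracting each part of $\mathcal P$ turns each $T_i$ into a connected multigraph on $|\mathcal P|$ vertices. Hence each $T_i$ contains at least $|\mathcal P|-1$ edges joining different parts. Since $T_1$ and $T_2$ share no edge, this gives $e_G(\mathcal P)\ge 2(|\mathcal P|-1)$.

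For sufficiency I would use the matroid-union (Edmonds) route rather than Tutte's original argument. Let $M$ be the cycle matroid of $G$, with rank function $r(X)=n-c(X)$, where $c(X)$ is the number of components of the spanning subgraph $(V(G),X)$. By the matroid union theorem, the maximum size of $F_1\cup F_2$ over forests $F_1,F_2$ equals
\[
\min_{X\subseteq E(G)}\bigl(|E(G)\setminus X|+2r(X)\bigr).
\]
Two edge-disjoint spanning trees exist if and only if this minimum is at least $2(n-1)$. So I would fix an arbitrary $X$ and show $|E(G)\setminus X|+2(n-c(X))\ge 2(n-1)$. Let $\mathcal P$ be the partition of $V(G)$ into the vertex sets of the components of $(V(G),X)$. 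Then $|\mathcal P|=c(X)$. Every edge crossing $\mathcal P$ lies outside $X$, since edges of $X$ stay inside components. Therefore $|E(G)\setminus X|\ge e_G(\mathcal P)\ge 2(c(X)-1)$, and adding $2(n-c(X))$ gives exactly $2(n-1)$. This settles sufficiency once the matroid union theorem is granted.

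The main obstacle is that the matroid union theorem is itself a nontrivial result and should not simply be assumed. If a self-contained argument is wanted, I would prove the two-forest case directly by augmenting paths. Start with a pair of edge-disjoint forests $F_1,F_2$ maximizing $|F_1|+|F_2|$, and suppose $F_1$ is not spanning-connected. Pick an edge $e$ outside $F_1\cup F_2$ joining different components of $F_1$, or argue that no such edge exists. Then run the standard exchange procedure. Adding $e$ to $F_i$ creates a cycle, an edge of that cycle is pushed into $F_{3-i}$, and so on. The set of edges reachable by this process spans a vertex subset whose induced structure is saturated. Taking $\mathcal P$ to consist of the components of this saturated region together with the remaining singletons should produce a partition with $e_G(\mathcal P)<2(|\mathcal P|-1)$, which is a contradiction. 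The bookkeeping of this exchange closure, namely showing that termination yields a violating partition, is where I would expect most of the effort to go. Since the paper only cites the result from \cite{NashWilliams1961,Tutte1961}, I would present the matroid-union derivation and refer to those sources for the union theorem.
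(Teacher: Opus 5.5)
Your proposal is correct. Note, though, that the paper gives no proof of this lemma: it is quoted as a black box from Nash-Williams and Tutte, so your argument is an actual derivation where the paper has only a citation.

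Your necessity argument is complete. Your sufficiency argument is the standard derivation from the matroid union theorem, and it is sound. The one step you leave implicit is that $\max|F_1\cup F_2|=2(n-1)$ yields two edge-disjoint spanning trees. This is a one-liner: $|F_1\cup F_2|\le|F_1|+|F_2|\le 2(n-1)$, so equality forces $F_1\cap F_2=\emptyset$ and $|F_1|=|F_2|=n-1$.

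It is worth noticing that the paper uses only your ``sufficiency'' direction. In Case 1 of the proof of Theorem~\ref{thm:connected-even}, the absence of two edge-disjoint spanning trees is turned into a partition with $e_G(\mathcal P)\le 2r-3$. The necessity half you prove in full is never needed. Your route delivers the needed direction explicitly: if some $X$ has $|E(G)\setminus X|+2r(X)<2(n-1)$, then the components of $(V(G),X)$ form a partition violating the condition.

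What your approach buys is transparency and generality. The partition condition appears as the graphic case of the union rank formula, and the same computation gives the $k$-tree version $e_G(\mathcal P)\ge k(|\mathcal P|-1)$. It does not reduce the depth of the result, however; it trades one classical citation for another, arguably deeper one.

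The augmenting-path paragraph, by contrast, is not yet a proof. The assertion that the exchange closure ends in a partition violating the condition is exactly the content of the theorem, and you state it without argument. Either drop that paragraph and cite the union theorem, or carry out the exchange argument in full, as in the standard textbook proofs of the $k$-tree theorem.
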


\begin{lemma}[\cite{HongShuFang2001}]\label{lem:hsf}
Let $G$ be a connected graph of order $n$ and size $m$, and suppose that $\delta(G)\geq\delta\geq1$.  Then
\[
        \rho(G)\leq
        \frac{\delta-1}{2}+
        \sqrt{2m-n\delta+\frac{(\delta+1)^2}{4}}.
\]
Moreover, for nonnegative integers $p$ and $q$ with $2q\leq p(p-1)$ and $0\leq x\leq p-1$, 
the function $f(x)=(x-1)/2+\sqrt{2q-px+(1+x)^2/4}$ is decreasing with respect to $x$.
\end{lemma}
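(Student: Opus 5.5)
The statement has two parts. I would prove the spectral bound with a quadratic matrix inequality at the largest Perron coordinate. I would prove the monotonicity of $f$ by a direct derivative computation.

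\emph{The spectral bound.} Let $\lambda=\rho(G)$ and let $\mb{x}$ be the positive Perron vector. Then $\bigl(A^2-(\delta-1)A\bigr)\mb{x}=(\lambda^2-(\delta-1)\lambda)\mb{x}$. The first step is to bound the row sums of $M=A^2-(\delta-1)A$. The $v$-th row sum is $\sum_{u\sim v}d(u)-(\delta-1)d(v)$. Every vertex not in $N(v)\cup\{v\}$ has degree at least $\delta$, so
\[
\sum_{u\sim v}d(u)=2m-d(v)-\sum_{w\notin N(v)\cup\{v\}}d(w)\le 2m-d(v)-(n-1-d(v))\delta .
\]
Hence the $v$-th row sum is at most $2m-(n-1)\delta$, independently of $v$.

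I would then take $v$ with $x_v=\max_w x_w$ and hope to deduce $\lambda^2-(\delta-1)\lambda\le 2m-(n-1)\delta$. Solving this quadratic inequality gives
\[
\lambda\le\frac{\delta-1}{2}+\sqrt{2m-n\delta+\frac{(\delta+1)^2}{4}},
\]
since $\delta+(\delta-1)^2/4=(\delta+1)^2/4$.

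\emph{Main obstacle.} $M$ is not nonnegative. The coefficient of $x_u$ for $u\in N(v)$ is $c_{vu}-(\delta-1)$, where $c_{vu}$ is the number of common neighbours of $u$ and $v$, and this can be negative. So "eigenvalue $\le$ maximum row sum evaluated at the largest coordinate" does not apply verbatim.

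To handle this, I would expand
\[
\lambda^2x_v=d(v)x_v+\sum_{u\sim v}\ \sum_{w\in N(u)\setminus\{v\}}x_w
\]
and subtract $(\delta-1)\lambda x_v=(\delta-1)\sum_{u\sim v}x_u$. The plan is to regroup the double sum so that, for each $u\sim v$, the $\delta-1$ copies of $x_u$ being subtracted are cancelled against walks through $u$. This uses $d(u)\ge\delta$, so that every neighbour $u$ of $v$ has at least $\delta-1$ further neighbours. After the cancellation all remaining coefficients should be nonnegative, and bounding each remaining $x_w$ by $x_v$ should give the row-sum bound above. This step would need a careful accounting argument, which I have not checked. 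If it fails, the fallback is simply to invoke the cited result \cite{HongShuFang2001} as stated.

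\emph{The monotonicity of $f$.} Write $D(x)=2q-px+(1+x)^2/4$. Then
\[
f'(x)=\frac12+\frac{(1+x)/2-p}{2\sqrt{D(x)}} ,
\]
so $f'(x)\le0$ is equivalent to $\sqrt{D(x)}\le p-(1+x)/2$. The right-hand side is positive because $x\le p-1$. Squaring, the inequality becomes $2q-px\le p^2-p(1+x)$, that is, $2q\le p(p-1)$, which is exactly the hypothesis. One also needs $D(x)>0$ on $[0,p-1]$ so that $f'$ is defined. This should follow from $D(x)\ge(1+x)^2/4-p\cdot x+2q$ together with a direct check; if $D$ can vanish, I would instead compare $f(x_1)$ and $f(x_2)$ directly by the same squaring argument.
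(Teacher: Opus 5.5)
The paper gives no proof of this lemma; it only quotes it from Hong, Shu and Fang. Judged on its own, your argument for the spectral bound has a genuine gap at the step you flagged.

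Your row-sum computation is correct: every row sum of $M=A^2-(\delta-1)A$ is at most $2m-(n-1)\delta$. The proposed repair at the maximal coordinate cannot work, however.

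\begin{itemize}
\item Regrouping walks does not change the linear form $\sum_w M_{vw}x_w$. For $u\sim v$, the coefficient of $x_u$ is $c_{vu}-(\delta-1)$ however you group the terms.
\item The walks $v\to u\to w$ guaranteed by $d(u)\ge\delta$ contribute other coordinates $x_w$, not $x_u$. After you bound them by $x_v$, the term $-(\delta-1)x_u$ remains. It cannot be bounded above by $-(\delta-1)x_v$, since $x_u\le x_v$.
\item If you instead substitute $\sum_{w\sim u,\,w\ne v}x_w=\lambda x_u-x_v$, everything collapses back to a tautology.
\end{itemize}

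Using only $0<x_w\le x_v$, the best you can get is $\lambda^2-(\delta-1)\lambda\le\sum_w\max\{M_{vw},0\}$. This is too weak: for $C_n$ with $n\ge5$ it equals $4$, while $2m-(n-1)\delta=2$.

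The missing idea is to average rather than evaluate at one vertex. Since $M$ is symmetric and $\mathbf x>0$ is an eigenvector of $M$ with eigenvalue $\lambda^2-(\delta-1)\lambda$,
\[
\bigl(\lambda^2-(\delta-1)\lambda\bigr)\sum_w x_w=\mathbf 1^{T}M\mathbf x=(M\mathbf 1)^{T}\mathbf x=\sum_w r_w(M)\,x_w\le\bigl(2m-(n-1)\delta\bigr)\sum_w x_w ,
\]
where $r_w(M)$ is the $w$-th row sum. Dividing by $\sum_w x_w>0$ gives your quadratic inequality, and the signs of the entries of $M$ never matter. The rest of your derivation then goes through.

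Your derivative computation for $f$ is correct wherever $D(x)>0$. It only gives non-increasing, though: when $2q=p(p-1)$, $f\equiv p-1$.

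Your claim that $D>0$ on $[0,p-1]$ follows from a ``direct check'' is false. For example, $q=0$, $p\ge2$ and $x=1$ give $D(1)=1-p<0$, so $f$ is not even real there. The monotonicity statement is meaningful only where $f$ is real, for instance on $0\le x\le\min\{p-1,\,2q/p\}$, where $D(x)\ge(1+x)^2/4>0$. That is exactly the regime $x=\delta$, $2q=2m\ge n\delta$ in which the lemma is applied.
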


\begin{lemma}[\cite{Chen1993}]\label{lem:cai}
Let $G$ be a $2$-edge-connected simple graph of order $n$.  If
\[
        e(G)\geq \binom{n-4}{2}+6,
\]
then one of the following holds:
\begin{enumerate}[(i)]
\item $G$ is supereulerian;
\item $G\cong K_{2,5}$;
\item $e(G)=\binom{n-4}{2}+6$, and either $G\cong Q_3-v$, or $G$ contains a complete subgraph $H\cong K_{n-4}$ such that $G/H\cong K_{2,3}$.
\end{enumerate}
Here $Q_3-v$ denotes the cube with one vertex deleted, and $G/H$ denotes the graph obtained from $G$ by contracting $H$ to a single vertex and deleting loops.
\end{lemma}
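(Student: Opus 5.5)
We would prove this lemma by Catlin's reduction method. It is a known theorem cited from \cite{Chen1993}, so the plan below only reconstructs the standard route.

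\textbf{Catlin's facts.} A graph $H$ is \emph{collapsible} if, for every even-size set $R\subseteq V(H)$, $H$ has a spanning connected subgraph whose odd-degree vertices are exactly $R$. Contracting all maximal collapsible subgraphs of $G$ gives the \emph{reduction} $G'$, and Catlin's theorem says that $G$ is supereulerian if and only if $G'$ is. We will use three further facts:
\begin{enumerate}[(a)]
\item complete graphs $K_m$ with $m\geq3$ are collapsible;
\item a graph with two edge-disjoint spanning trees is collapsible (this strengthens Lemma~\ref{lem:two-trees-even});
\item a nontrivial reduced graph $G'$ of order $n'$ contains no $K_3$ and satisfies $e(G')\leq 2n'-4$, with $K_{2,t}$ as the typical extremal non-supereulerian reduced graphs.
\end{enumerate}

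\textbf{Step 1: a large collapsible core.} Assume $n$ is large, say $n\geq 11$. The hypothesis $e(G)\geq\binom{n-4}{2}+6$ means the complement of $G$ misses only about $4n$ edges. We will show that $G$ contains a collapsible subgraph $H$ on at least $n-4$ vertices. The plan is to take the vertices of large degree: every vertex outside them has small degree, and the edge count bounds how many such vertices exist. On the large-degree set, the partition inequality of Lemma~\ref{lem:nwt} should hold, which gives two edge-disjoint spanning trees and hence collapsibility by fact (b).

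\textbf{Step 2: bounding and classifying the reduction.} Every vertex outside $H$ that has at least two neighbours in $H$, or that lies in a triangle with $H$, is absorbed into the collapsible core. Hence the reduction $G'$ has at most $5$ vertices. We then list the $2$-edge-connected, triangle-free reduced graphs on at most $5$ vertices that are not supereulerian. The only candidate is $K_{2,3}$, which occurs when exactly four vertices lie outside the core, matching $G/H\cong K_{2,3}$. A count of edges in this situation forces $H$ to be complete, $H\cong K_{n-4}$, and forces $e(G)=\binom{n-4}{2}+6$, which is alternative (iii).

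\textbf{Step 3: small orders and the main obstacle.} For small $n$ (roughly $n\leq10$), the edge bound no longer forces a large core, and we expect to need a finite case check. This check, organised by the reduction and the bound $e(G')\leq 2n'-4$, should produce exactly the exceptional graphs: $K_{2,5}$ when $n=7$, with $10=\binom{3}{2}+7$ edges, and $Q_3-v$ when $n=7$, with $9$ edges. We expect this small-order case analysis to be the main difficulty, together with making the degree-threshold argument of Step 1 precise enough to reach the sharp constant $+6$ rather than a weaker one. For the case check we would first try to bound the number of degree-$2$ vertices by counting edges, then enumerate the possible configurations of those vertices around the remaining dense part.
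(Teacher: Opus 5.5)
The paper gives no proof of Lemma~\ref{lem:cai}; it quotes the result from Chen, so your outline has to stand on its own. Catlin's reduction with facts (a)--(c) is the right framework. The gap is Step~1: it carries all the weight but is only asserted, and the degree-threshold mechanism fails at the orders you claim. The complement of $G$ has at most $\binom{n}{2}-\binom{n-4}{2}-6=4n-16$ edges. So five vertices of degree at most $\tau$ are ruled out only when $5(n-1-\tau)-10>4n-16$, i.e.\ $\tau<(n+1)/5$, and at $n=11$ this allows only $\tau\le 2$. In $\mathcal B_{11}$ the vertices of degree at least $3$ are $V(K_7)\cup\{b\}$, and $b$ has no neighbour among them, so this set does not even induce a connected graph. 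With a higher threshold the low-degree set is no longer small. Take $K_4$ plus seven independent vertices, each joined to three vertices of the $K_4$: it has $27=\binom{7}{2}+6$ edges, yet all its vertices of degree at least $4$ lie in the $K_4$. That graph is collapsible only through the absorption you postpone to Step~2, and bounding how many vertices survive absorption is exactly the counting Step~1 was meant to supply. So a collapsible subgraph of order at least $n-4$ is not established. Raising the threshold until a Nash--Williams check could work needs considerably larger $n$, which pushes the leftover ``finite check'' far beyond $n\le 10$.

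The missing idea is to apply (c) globally, not only in the small cases; then Step~1 is unnecessary. Let $H_1,\dots,H_{n'}$ be the maximal collapsible subgraphs contracted to form $G'$, with $h_i=|V(H_i)|\in\{1,3,4,\dots\}$, since no simple graph of order $2$ is collapsible. If $G$ is not supereulerian, $G'$ is a $2$-edge-connected, non-supereulerian reduced graph. The only $2$-edge-connected triangle-free simple graph on $3$ or $4$ vertices is $C_4$, which is supereulerian, so $n'\ge5$, and
\[
e(G)\le\sum_{i=1}^{n'}\binom{h_i}{2}+e(G')\le\binom{n-n'+1}{2}+2n'-4 .
\]
Write $n'=5+m$. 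The right-hand side minus $\bigl(\binom{n-4}{2}+6\bigr)$ equals $m\bigl(\frac{m+13}{2}-n\bigr)$, which is negative for $1\le m\le n-5$ whenever $n\ge 9$. Hence $n'=5$ and equality holds throughout: $G'\cong K_{2,3}$, the part sizes are $(n-4,1,1,1,1)$, and $H_1\cong K_{n-4}$, which is (iii). For $5\le n\le 8$ the same inequality, with parts of order $2$ excluded, leaves besides $n'=5$ only $n'=n$. Then $G$ itself is reduced with $F(G)=2n-e(G)-2\in\{2,3\}$. The known description of reduced graphs with $F\le 2$ (they are $K_1$, $K_2$ or $K_{2,t}$), plus a short check of the case $F=3$, yields $K_{2,5}$, $Q_3-v$, and at $n=6$ the $K_{2,3}$ with one subdivided edge. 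That last graph is (iii) with $H\cong K_2$, which is not collapsible, so in small orders (iii) does not always come from a collapsible core as your Step~2 assumes.
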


\begin{lemma}\label{lem:low-degree-core-three}
Let $G$ be a $2$-edge-connected graph of order $n\geq8$ with $\delta(G)\geq3$.  If
\[
        \rho(G)\geq\rho(\mathcal C_{n,3}),
\]
then $G$ has a connected even factor.
\end{lemma}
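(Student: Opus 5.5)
The plan is to convert the spectral hypothesis into an edge-count bound via Lemma~\ref{lem:hsf}, and then apply Lemma~\ref{lem:cai} to the $2$-edge-connected graph $G$. The extremal graphs in Lemma~\ref{lem:cai}(ii),(iii) are then excluded directly by the minimum degree.

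\textbf{Step 1: a lower bound for $\rho(\mathcal C_{n,3})$.} Since $K_{n-4}$ is a proper subgraph of the connected graph $\mathcal C_{n,3}=T(4,n-4)$, Lemma~\ref{lem-1} gives $\rho(\mathcal C_{n,3})>n-5$. Hence $\rho(G)>n-5$.

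\textbf{Step 2: an edge-count lower bound.} Let $m=e(G)$. Since $\delta(G)\ge3$, the monotonicity part of Lemma~\ref{lem:hsf} lets us use the Hong--Shu--Fang bound with $\delta=3$:
\[
\rho(G)\le 1+\sqrt{2m-3n+4}.
\]
Combining this with $\rho(G)>n-5$ gives $2m-3n+4>(n-6)^2$, that is,
\[
m>\frac{n^2-9n+32}{2}.
\]
Now compare with the threshold of Lemma~\ref{lem:cai}. We have $\binom{n-4}{2}+6=\frac{n^2-9n+32}{2}$, and this is an integer. The two coincide exactly, so the strict inequality gives
\[
m\ge\binom{n-4}{2}+7.
\]
This exactness is the delicate point, and I would double-check it carefully. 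If the constants did not line up, the fallback would be to use the sharper fact that $\rho(G)\ge\rho(T(4,n-4))$, which exceeds $n-5$ by a computable amount, and to redo the estimate with that margin.

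\textbf{Step 3: apply Lemma~\ref{lem:cai}.} $G$ is $2$-edge-connected and satisfies $e(G)\ge\binom{n-4}{2}+7>\binom{n-4}{2}+6$. Alternative (iii) requires equality $e(G)=\binom{n-4}{2}+6$, so it is excluded. Alternative (ii) is excluded because $K_{2,5}$ has order $7<8$ and contains vertices of degree $2$. Therefore $G$ is supereulerian, that is, $G$ has a connected even factor.

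If Step 2 only yielded $m\ge\binom{n-4}{2}+6$, we would also have to rule out the equality cases of (iii) directly:
\begin{itemize}
\item $Q_3-v$ has order $7<8$ and vertices of degree $2$, so it cannot occur.
\item If $G$ contains $H\cong K_{n-4}$ with $G/H\cong K_{2,3}$, the four vertices outside $H$ consist of at least two vertices of degree $2$ in $K_{2,3}$ that are not the contracted vertex. Each such vertex has degree at most $2$ in $G$, since the edges it has to $H$ are merged in the contraction but it is adjacent only to its two neighbours in $G/H$ (at most one of which is the contracted vertex). This contradicts $\delta(G)\ge3$.
\end{itemize}

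The main obstacle is checking that the Hong--Shu--Fang estimate lands precisely at $\binom{n-4}{2}+6$ with the correct strictness. This requires keeping the strict inequality $\rho(G)>n-5$ and verifying that $n-6\ge0$, so that squaring is legitimate; both hold since $n\ge8$.
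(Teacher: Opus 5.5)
Your proof is correct and is essentially the paper's argument: $\rho(G)>n-5$ from $K_{n-4}\subset\mathcal C_{n,3}$, then the Hong--Shu--Fang bound with $\delta=3$ gives $e(G)\ge\binom{n-4}{2}+7$ (your identity $\binom{n-4}{2}+6=\frac{n^2-9n+32}{2}$ is exactly the needed check), and finally Lemma~\ref{lem:cai}, with alternative (ii) excluded by the minimum degree and (iii) by the edge count. The fallback paragraph is never needed, and its contraction reasoning is loose: if edges to $H$ were merged, an outside vertex could have several neighbours in $H$, and it is really the equality $e(G)=\binom{n-4}{2}+6$ that forces at most one such neighbour.
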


\begin{proof}
Note that $K_{n-4}$ is a proper subgraph of $\mathcal C_{n,3}$. Then Lemma~\ref{lem-1} 
implies that 
\[
\rho(\mathcal C_{n,3})>\rho(K_{n-4})=n-5.
\]
Combining this with the assumption, we have $\rho(G)>n-5$. 
By Lemma~\ref{lem:hsf} with $\delta=3$, we obtain
\[
        n-5<\rho(G)\leq1+\sqrt{2e(G)-3n+4}.
\]
It follows that 
\[
        e(G)\geq\binom{n-4}{2}+7. 
\]

Assume, to the contrary, that \(G\) has no connected even factor. Then \(G\)
is not supereulerian. By Lemma~\ref{lem:cai}, one of the alternatives
\textup{(ii)} and \textup{(iii)} must occur.

The alternative \textup{(ii)} is impossible, since
\(\delta(K_{2,5})=2<3\le\delta(G)\). The alternative \textup{(iii)} is also
impossible, because it requires
\(e(G)=\binom{n-4}{2}+6\), whereas
\(e(G)\ge\binom{n-4}{2}+7\). This contradiction proves the lemma. 
\end{proof}

\begin{lemma}\label{lem:low-degree-core-two}
Let \(G\) be a \(2\)-edge-connected graph of order \(n\ge8\) with
\(\delta(G)\ge2\). If
\[
\rho(G)\ge\rho(\mathcal C_{n,2}),
\]
then \(G\) has a connected even factor.
\end{lemma}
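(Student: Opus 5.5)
The plan is to follow the proof of Lemma~\ref{lem:low-degree-core-three}: turn the spectral hypothesis into a lower bound on $e(G)$ through Lemma~\ref{lem:hsf}, and then apply Lemma~\ref{lem:cai}. The difference is that we now use $\delta=2$ in Lemma~\ref{lem:hsf}, and we compare with the larger clique contained in $\mathcal C_{n,2}$.

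\emph{Lower bound on $\rho(G)$.} Since $\mathcal C_{n,2}=T(3,n-3)$ contains $K_{n-3}$ as a proper subgraph, Lemma~\ref{lem-1} gives $\rho(\mathcal C_{n,2})>n-4$. Hence $\rho(G)>n-4$.

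\emph{Lower bound on $e(G)$.} Write $m=e(G)$. As $G$ is connected, Lemma~\ref{lem:hsf} with $\delta=2$ gives
\[
n-4<\rho(G)\le \tfrac12+\sqrt{2m-2n+\tfrac94}.
\]
For $n\ge8$ we have $n-\tfrac92>0$, so squaring is legitimate and yields $(n-\tfrac92)^2<2m-2n+\tfrac94$. This simplifies to
\[
2m>n^2-7n+18 .
\]
On the other hand,
\[
2\Bigl(\binom{n-4}{2}+6\Bigr)=n^2-9n+32=(n^2-7n+18)-(2n-14).
\]
Since $2n-14\ge2$ for $n\ge8$, we get $2m>2\bigl(\binom{n-4}{2}+6\bigr)+2$. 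Therefore $m\ge\binom{n-4}{2}+7$, and in particular $m>\binom{n-4}{2}+6$.

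\emph{Applying Lemma~\ref{lem:cai}.} Suppose, for contradiction, that $G$ has no connected even factor, so $G$ is not supereulerian. Since $G$ is $2$-edge-connected and $e(G)\ge\binom{n-4}{2}+6$, Lemma~\ref{lem:cai} leaves only alternatives (ii) and (iii).
\begin{itemize}
\item Alternative (ii), $G\cong K_{2,5}$, is impossible because $K_{2,5}$ has $7<8\le n$ vertices. Note that the minimum degree cannot be used to exclude it here, since $\delta(K_{2,5})=2$.
\item Alternative (iii) is impossible because it requires $e(G)=\binom{n-4}{2}+6$, while we showed $e(G)>\binom{n-4}{2}+6$.
\end{itemize}
This contradiction will prove the lemma.

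\emph{Main obstacle.} The only step needing care is the arithmetic comparison, where the slack is exactly $n-7$. This is what makes $n\ge8$ sufficient for the strict inequality. The same condition $n-\tfrac92>0$ is what justifies squaring the bound from Lemma~\ref{lem:hsf}.
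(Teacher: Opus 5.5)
Your proof is correct and takes essentially the same route as the paper. You use $K_{n-3}\subset\mathcal C_{n,2}$ to get $\rho(G)>n-4$, apply Lemma~\ref{lem:hsf} with $\delta=2$ to force $e(G)>\binom{n-4}{2}+6$, and then use Lemma~\ref{lem:cai}, excluding (ii) by order and (iii) by the edge count. The only difference is cosmetic: the paper passes through the intermediate bound $e(G)\ge\binom{n-3}{2}+4$, whereas you compare $2e(G)>n^2-7n+18$ directly with $2\bigl(\binom{n-4}{2}+6\bigr)$, and you also justify the squaring step, which the paper leaves implicit.
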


\begin{proof}
Note that \(K_{n-3}\) is a proper subgraph of \(\mathcal C_{n,2}\). Then
Lemma~\ref{lem-1} implies that
\[
\rho(\mathcal C_{n,2})>\rho(K_{n-3})=n-4.
\]
Combining this with the assumption, we have \(\rho(G)>n-4\). By
Lemma~\ref{lem:hsf} with \(\delta=2\), we obtain
\[
n-4<\rho(G)\le \frac{1}{2}+\sqrt{2e(G)-2n+\frac{9}{4}}.
\]
It follows that
\[
e(G)\ge \binom{n-3}{2}+4.
\]
Moreover, 
\[
\left(\binom{n-3}{2}+4\right)-\left(\binom{n-4}{2}+6\right)=n-6>0.
\]
Then, we have
\[
e(G)>\binom{n-4}{2}+6.
\]

Assume, to the contrary, that \(G\) has no connected even factor. Then \(G\)
is not supereulerian. By Lemma~\ref{lem:cai}, one of the alternatives
\textup{(ii)} and \textup{(iii)} must occur.

The alternative \textup{(ii)} is impossible, since \(K_{2,5}\) has order
\(7\), whereas \(n\ge8\). The alternative \textup{(iii)} is also impossible,
because it requires \(e(G)=\binom{n-4}{2}+6\), whereas
\(e(G)>\binom{n-4}{2}+6\). This contradiction proves the lemma.
\end{proof}

\begin{proof}[Proof of Theorem~\ref{thm:connected-even}]
Suppose that \(G\) has no connected even factor. We show that
\(G\cong\mathcal C_{n,\delta}\).

If \(G\) has a bridge, then Lemma~\ref{lem:bridge-case} gives
\(\rho(G)\le \rho(\mathcal C_{n,\delta})\), with equality if and only if
\(G\cong\mathcal C_{n,\delta}\). Since
\(\rho(G)\ge \rho(\mathcal C_{n,\delta})\), it follows that
\(G\cong\mathcal C_{n,\delta}\). Hence we may assume that \(G\) has no
bridge. Then \(G\) is \(2\)-edge-connected. We now consider the following two cases.

\medskip
\noindent{\bf Case 1.} \(\delta\ge4\).

By Lemma~\ref{lem:two-trees-even}, \(G\) has no two edge-disjoint spanning
trees. Thus Lemma~\ref{lem:nwt} implies that there exists a partition
\(\mathcal P=\{V_1,V_2,\ldots,V_r\}\) of \(V(G)\) such that
\(q:=e_G(\mathcal P)\le2r-3\). Since \(G\) is bridgeless, every nonempty
proper vertex set sends at least two edges to its complement. Hence, if
\(c_i=e_G(V_i,V(G)\setminus V_i)\), then \(c_i\ge2\) for every \(i\). If
\(r=2\), then \(q=c_1=c_2\le1\), a contradiction. Thus \(r\ge3\).

Relabel the parts so that \(|V_1|\) is maximum. Let
\[
\mathcal I=\{i\in\{2,\ldots,r\}: |V_i|\le\delta\}.
\]
For \(i\in\mathcal I\), let \(|V_i|=s_i\). Then
\[
c_i=\sum_{v\in V_i}d_G(v)-2e_G(V_i)
\ge s_i\delta-s_i(s_i-1)
=s_i(\delta-s_i+1)
\ge\delta.
\]
Note that each crossing edge is counted twice in \(\sum_i c_i\). Then we have
\[
4r-6\ge2q=\sum_{i=1}^r c_i
\ge2+|\mathcal I|\delta+2(r-1-|\mathcal I|).
\]
Thus
\[
|\mathcal I|(\delta-2)\le2r-6.
\]
Note that \(\delta\ge4\). Then \(|\mathcal I|\le r-3\). Therefore at least
two of the parts \(V_2,\ldots,V_r\) have order at least \(\delta+1\), and so
\[
\sum_{i=2}^r |V_i|
\ge |\mathcal I|+(r-1-|\mathcal I|)(\delta+1)
\ge r-1+2\delta.
\]

Let \(F\) be the graph consisting of the \(q\) crossing edges of
\(\mathcal P\), and let \(H=G-E(F)\). By Lemma~\ref{lem-2}, we have 
\(\rho(G)\le\rho(H)+\rho(F)\). Since \(H\) is the disjoint union of
\(G[V_1],\ldots,G[V_r]\), it follows that 
\[
\rho(H)\le |V_1|-1
=n-\sum_{i=2}^r |V_i|-1
\le n-r-2\delta.
\]
By Lemma~\ref{lem:hong-component}, 
\(\rho(F)\le\sqrt{2q-1}\le\sqrt{4r-7}\). Therefore 
\[
\rho(G)\le n-r-2\delta+\sqrt{4r-7}.
\]
For \(r\ge3\) and \(\delta\ge4\), we have \(\sqrt{4r-7}<r+\delta-2\), and hence 
\[
\rho(G)<n-r-2\delta+r+\delta-2=n-\delta-2.
\]
On the other hand, \(K_{n-\delta-1}\) is a proper subgraph of
\(\mathcal C_{n,\delta}\). By Lemma~\ref{lem-1}, we have 
\(\rho(\mathcal C_{n,\delta})>\rho(K_{n-\delta-1})=n-\delta-2\). Hence
\(\rho(G)<\rho(\mathcal C_{n,\delta})\), contradicting the assumption.

\medskip
\noindent{\bf Case 2.} \(\delta\in\{2,3\}\).

If \(\delta=3\), then Lemma~\ref{lem:low-degree-core-three} implies that
\(G\) has a connected even factor, a contradiction. If \(\delta=2\), then
Lemma~\ref{lem:low-degree-core-two} gives the same contradiction.

Hence \(G\)  has a bridge.  Moreover, by Lemma~\ref{lem:bridge-case}, we have 
\(G\cong\mathcal C_{n,\delta}\). This completes the proof of Theorem~\ref{thm:connected-even}. 
\end{proof}

\vspace{6mm}

\n\textbf{Data Availability}  Data sharing not applicable to this
article as no datasets were generated or analysed during the current
study.

\vspace{2mm}

\n\textbf{Declarations}

\vspace{1mm}

\n\textbf{Conflict of interest}  The authors have no relevant
financial or non-financial interests to disclose.

\end{document}